\theoremstyle{plain}
\newtheorem{theorem}{Theorem}[section]
\newtheorem{proposition}[theorem]{Proposition}
\newtheorem{lemma}[theorem]{Lemma}
\newtheorem{corollary}[theorem]{Corollary}
\newtheorem*{proposition*}{Proposition}
\theoremstyle{definition}
\newtheorem{definition}[theorem]{Definition}
\newtheorem{example}[theorem]{Example}
\theoremstyle{remark}
\newtheorem{remark}[theorem]{Remark}
\newcommand{\secref}[1]{Section~\ref{#1}}
\newcommand{\thmref}[1]{Theorem~\ref{#1}}
\newcommand{\lemref}[1]{Lemma~\ref{#1}}
\newcommand{\corref}[1]{Corollary~\ref{#1}}
\newcommand{\exref}[1]{Example~\ref{#1}}
\newcommand{\remref}[1]{Remark~\ref{#1}}
\newcommand{\defref}[1]{Definition~\ref{#1}}
\newcommand{\figref}[1]{Figure~\ref{#1}}
\numberwithin{subsection}{theorem}
\newcommand{\R}{\mathbb{R}}
\newcommand{\Z}{\mathbb{Z}}
\begin{document}

\title[Subdivision and Digital Images]{Subdivision of Maps of Digital Images}

\author{Gregory Lupton}
\author{John Oprea}
\author{Nicholas A. Scoville}

\address{Department of Mathematics, Cleveland State University, Cleveland OH 44115 U.S.A.}

\email{g.lupton@csuohio.edu}
\email{j.oprea@csuohio.edu}

\address{Department of Mathematics and Computer Science, Ursinus College, Collegeville PA 19426 U.S.A.}

\email{nscoville@ursinus.edu}

\date{\today}

\keywords{Digital Image, Digital Topology,  Subdivision,  Digital Fundamental Group, Lusternik-Schnirelmann category}
\subjclass[2010]{ (Primary) 54A99 55M30 55P05 55P99;  (Secondary) 54A40 68R99 68T45 68U10}

\begin{abstract}   With a view towards providing tools for analyzing and understanding digitized images, various notions from algebraic topology have been introduced into the setting of digital topology.  In the ordinary topological setting, invariants such as the fundamental group are invariants of homotopy type.  In the digital setting, however, the usual notion of homotopy leads to a very rigid invariance that does not correspond well with the topological notion of homotopy invariance.  In this paper, we establish fundamental results about subdivision of maps of digital images with $1$- or $2$-dimensional domains.  Our results lay the groundwork for showing that the digital fundamental group is an invariant of a much less rigid equivalence relation on digital images, that is more akin to the topological notion of homotopy invariance.  Our results also lay the groundwork for defining other invariants of digital images in a way that makes them invariants of this less rigid equivalence.
\end{abstract}

\thanks{This work was partially supported by grants from the Simons Foundation: (\#209575 to Gregory Lupton
and \#244393 to John Oprea).}

\maketitle

\section{Introduction}

In digital topology, the basic object of interest  is a  \emph{digital image}: a finite set of integer lattice points in an ambient Euclidean space with a suitable adjacency relation between points.  This is an abstraction of an actual digital image which consists of  pixels (in the plane, or higher dimensional analogues of such).

There is an extensive literature with many results that use ideas from topology in this setting (e.g. \cite{Ro86, Bo99, Evako2006}).
In many instances, however, notions from topology have been translated directly into the digital setting in a way that results in digital versions of topological notions  that  are very rigid   and hence have limited applicability.
 In contrast to this existing literature, in \cite{LOS19a} we have started to  build a more general ``digital homotopy theory" that
 brings the full strength of homotopy theory to the digital setting.  In our approach, we aim to use less rigid constructions, with a view towards broad applicability and greater depth of development.
 A key ingredient in such an approach is \emph{subdivision}.  However, the behaviour of maps with respect to subdivision is not well-understood.
In this paper, we  establish  fundamental results about subdivision of maps of digital images with $1$-dimensional (1D) and $2$-dimensional (2D) domains.  The utility of our results is indicated in \cite{LOS19c}, in which we define a digital fundamental group and show that it is an invariant of subdivision-homotopy equivalence, which is a concept of ``sameness" for spaces that is much less rigid than the notion of homotopy equivalence that is commonly used in digital topology.  Our results of \cite{LOS19a, LOS19c}, both in the basic constructions and in the developments, emphasize subdivision as a basic feature, whereas in those of \cite{Bo99} and many other articles in the digital topology literature, subdivision plays a background role at most.   Our results here on subdivision of maps also allow us to define invariants of 2D digital images such as Lusternik-Schnirelmann category in a way that is much less rigid than previously done (e.g.~as in \cite{Bo-Ve18}).  In general, our results work towards establishing  ``subdivision versions" of the usual invariants.  Our motivating point of view is that one should incorporate subdivision at a basic level, rather than directly translate a definition or construction from the topological to the digital setting.  Incorporating subdivision results in digital invariants whose behaviour  more closely follows that of their topological counterparts, when compared to the commonly used digital invariants that do not incorporate subdivision.   To do this generally, however, requires a fuller understanding of the behaviour of maps with respect to subdivision---maps with domains of arbitrary dimension.

The paper is organized as follows.  In \secref{sec: basics} we review standard definitions and terminology, and set our conventions (especially with regard to adjacency).    In \secref{sec: subdivision} , we give a thorough discussion of subdivision of digital images and maps of digital images.  We show how subdivision may be broken down into a succession of partial subdivisions (\corref{cor: factor rho}).  Several figures are included that serve to indicate the basic ideas and concerns.  The main question, illustrated through examples,  is how---or even whether---a map of digital images induces one on subdivisions.  In \secref{sec: 1D}, we resolve this question for maps of digital images whose domain is an interval, namely paths and loops in a digital image (of any dimension). In \secref{sec: 2D} we do likewise for  maps whose domain is a 2D digital image.  In each case, we construct a canonical map of subdivisions from a given map of digital images.  The main results are \thmref{thm: path odd subdivision map} and  \thmref{thm: 2-D subdivision map}.  A brief indication of the way in which our results here may be applied is given in \secref{sec: homotopy}.  But applications of and developments from  these results appear elsewhere.
There, we also indicate how our results here on subdivision of maps lay the groundwork for future developments.

\section{Basic Notions: Adjacency, Continuity, Products}\label{sec: basics}

In this paper, a \emph{digital image (of dimension $n$)} $X$ means a finite subset $X \subseteq \Z^n$ of the integral lattice in some $n$-dimensional Euclidean space, together with
a particular  adjacency relation inherited from that of $\Z^n$.  Namely, two points $x = (x_1, \ldots, x_n) \in \Z^n$ and  $y = (y_1, \ldots, y_n) \in \Z^n$  are adjacent if their coordinates satisfy $|x_i-y_i| \leq 1$ for each $i = 1, \dots, n$.

\begin{remark}
In the literature, it is common to allow for various choices of adjacency.  For example, a planar digital image is a subset of $\Z^2$ with either ``$4$-adjacency" or ``$8$-adjacency" (see, e.g.~Section 2 of \cite{Bo99}).  However, in this paper, we always assume (a subset of) $\Z^n$ has the highest degree of adjacency possible ($8$-adjacency in $\Z^2$, $26$-adjacency in $\Z^3$, etc.).   In fact, there is a philosophical reason for our fixed choice of adjacency relation:  It is effectively forced on us by the considerations of  \defref{def: products} and \exref{ex: diagonal} below.
\end{remark}

If $x, y \in X \subseteq \Z^n$, we write $x \sim_X y$ to denote that $x$ and $y$ are adjacent in $X$. For digital images $X\subseteq \Z^n$ and $Y \subseteq \Z^m$, a  function $f\colon X \to Y$  is \emph{continuous} if $f(x) \sim_Y f(y)$ whenever $x \sim_Xy$.
By a \emph{map} of digital images, we mean a continuous function.  Occasionally, we may encounter a non-continuous function of digital images.  But, mostly, we deal with maps---continuous functions---of digital images.  The \emph{composition of maps} $f\colon X \to Y$ and $g\colon Y \to Z$ gives a (continuous) map $g\circ f\colon X \to Z$, as is easily checked from the definitions.

An \emph{isomorphism} of digital images is a continuous bijection $f \colon X \to Y$ that admits a continuous inverse $g \colon Y \to X$, so that  we have $f\circ g = \mathrm{id}_Y$ and $g\circ f = \mathrm{id}_X$, and $g$ is also bijective.  If $f \colon X \to Y$ is an isomorphism of digital images, then we say that $X$ and $Y$ are isomorphic digital images, and write $X \cong Y$.

\begin{example}\label{ex:basic digital images}
We use the notation $I_N$ for the \emph{digital interval of length} $N$, namely $I_N \subseteq \Z$ consists of the integers from $0$ to $N$ in $\Z$, and consecutive
integers are adjacent. Thus, we have $I_1 = [0, 1] = \{0, 1\}$, $I_2 = [0, 2] = \{0, 1, 2\}$, and so-on.  Occasionally, we may use $I_0$ to denote the singleton point $\{0\} \subseteq \Z$. As an example in $\Z^2$, consider what we call  \emph{the Diamond}, $D = \{ (1, 0), (0, 1), (-1, 0), (0, -1) \}$, which may be viewed as a digital circle.  Note that pairs of vertices all of whose coordinates differ by $1$, such as  $(1, 0)$ and  $(0, 1)$ here, are adjacent according to our definition.  Otherwise, $D$ would be disconnected.
\begin{figure}[h!]
\centering
   \begin{subfigure}{0.49\linewidth} \centering
    \includegraphics[trim=160 350 80 80,clip,width=\textwidth]{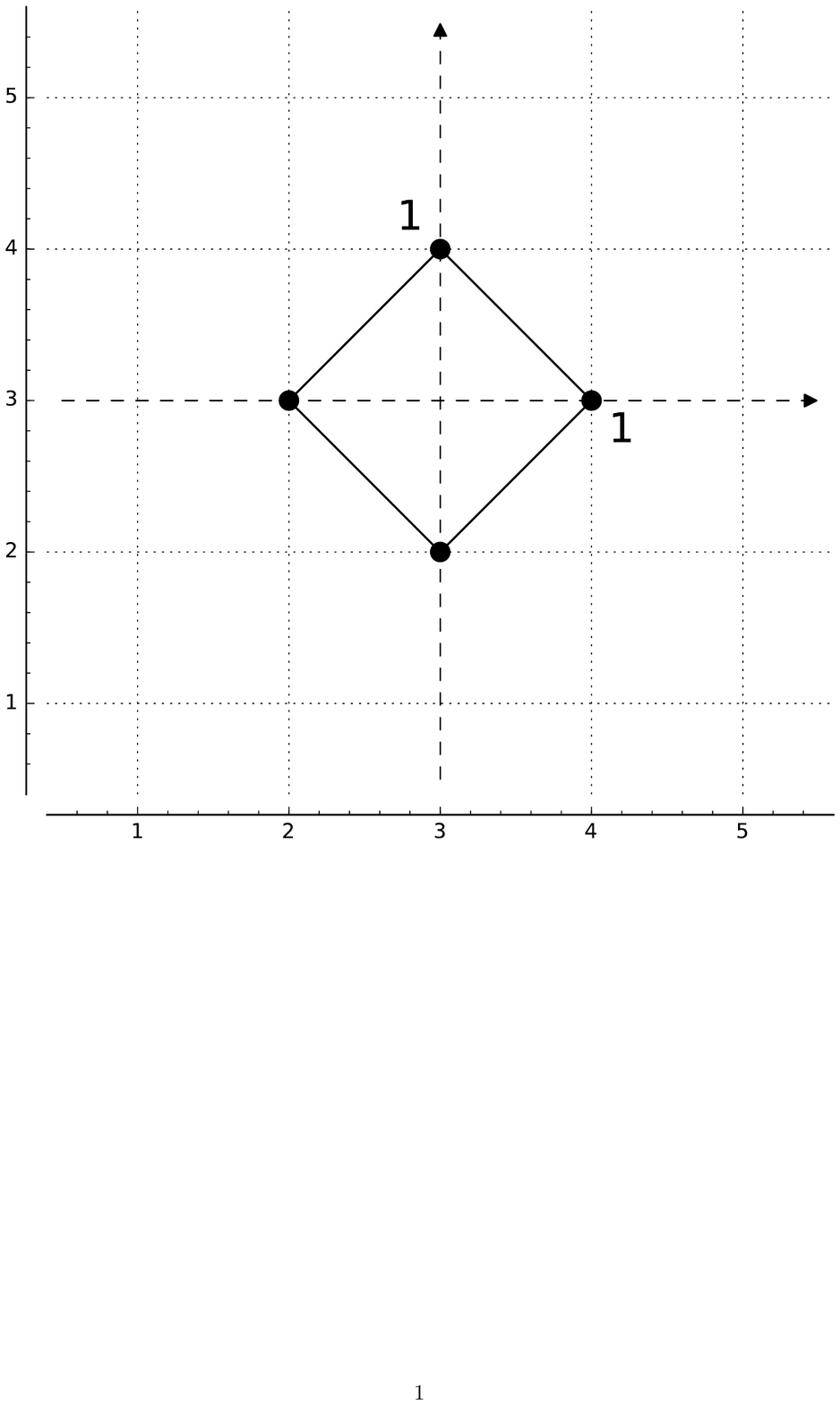}
     \caption{$D$: The Diamond}\label{fig:Circle D}
   \end{subfigure}
   \begin{subfigure}{0.49\linewidth} \centering
    \includegraphics[trim=160 350 80 80,clip,width=\textwidth]{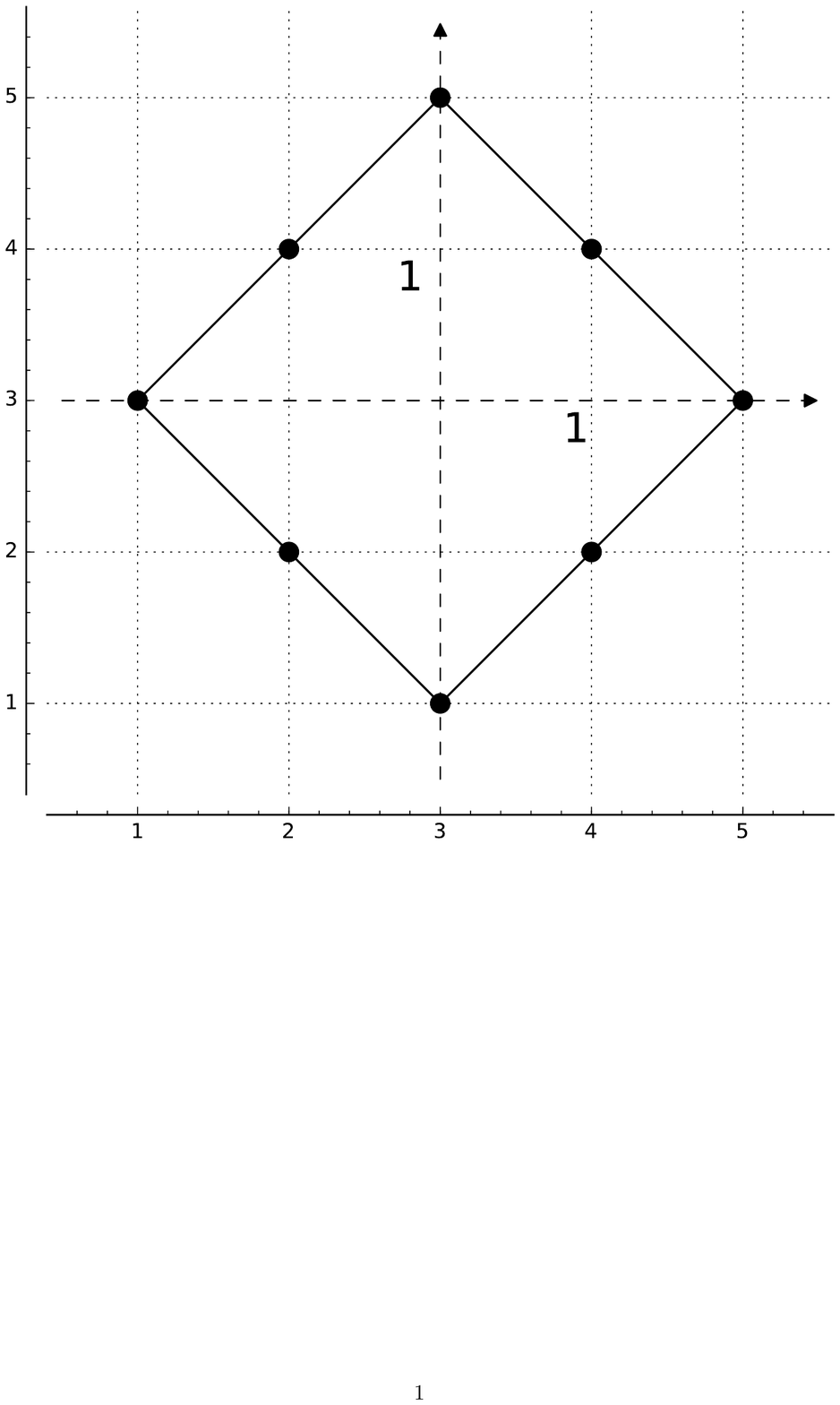}
     \caption{$C$: A larger digital circle}\label{fig:Circle C}
   \end{subfigure}
\caption{Two digital circles.} \label{fig:D & C}
\end{figure}
In \figref{fig:D & C} we have included the axes (dashed) and also indicated adjacencies (solid) in the style of a graph.  Note, though, that we have no choice as to which points are adjacent: this is determined by position, or coordinates, and we do not choose to add or remove edges here.
 As an example in  $\Z^3$, we have $S = \{ (1, 0, 0), (0, 1, 0), (-1, 0, 0), (0, -1, 0), (0, 0, 1), (0, 0, -1) \}$ (the vertices of an octahedron, with adjacencies corresponding to the edges of the octahedron).  This may be viewed as a digital $2$-sphere, and the pattern emerging here may be continued to a digital $n$-sphere in $\Z^{n+1}$ with $2n+2$ vertices. The map $f\colon I_2 \to I_1$ given by $f(0) = 0$, $f(1) = 0$, and $f(2) = 1$ is continuous, but the function  $g\colon I_1 \to I_2$ given by $g(0) = 0$, $g(1) = 2$ is not: we cannot ``stretch" an interval to a longer one.  Likewise, suppose we enlarge $D$ to the bigger digital circle   $C = \{ (2, 0), (1, 1), (0, 2), (-1, 1), (-2, 0), (-1, -1), (0, -2), (1, -1) \}$ (see \figref{fig:D & C}).  Then the only maps $D \to C$ will be ``homotopically trivial:" we cannot ``wrap" a smaller circle around a larger one.
\end{example}

The last comment of the preceding example points to the main motivation for the results of this paper. Whereas homotopy is not the main focus of this paper (the notion is reviewed here in \secref{sec: homotopy}),  our results here are motivated by wanting to relax the notion of homotopy equivalence commonly used in digital topology.  We can give the basic idea informally, as follows. Because we cannot ``wrap" a smaller circle around a larger one, digital circles of different sizes are not homotopy equivalent, in the sense commonly used in digital topology.  But from a (topological) homotopy point of view, it seems reasonable to view $D$ and $C$ as above---more generally, digital circles of different sizes---as being equivalent.  In \cite{LOS19a, LOS19c}, we develop a notion of \emph{subdivision-homotopy equivalence} of digital images, which is a notion of ``sameness" of digital images that combines subdivision with homotopy equivalence, and which is a less rigid notion of ``sameness" than digital homotopy equivalence.  Indeed, it turns out that $D$ and $C$ are subdivision-homotopy equivalent, but not homotopy equivalent.  The comments made here about $D$ and $C$ are discussed in detail in Exercise 3.22 of  \cite{LOS19c}.

\begin{definition}[digital products]\label{def: products}
The product of digital images $X\subseteq \Z^m$ and $Y\subseteq \Z^n$ is the Cartesian product of sets $X \times Y \subseteq \Z^{m} \times \Z^{n} \cong \Z^{m+n}$ with the adjacency relation $(x, y) \sim_{X \times Y} (x', y')$ when $x\sim_X x'$  and $y \sim_Y y'$.
\end{definition}

In fact,  this is tantamount to our assumption that $\Z^n$, and any digital image in it, has the highest degree of adjacency possible, with the isomorphisms $\Z^n \cong \Z^r \times \Z^{n-r}$ for $r = 1, \ldots, n-1$.  Note that some authors in the literature use a different adjacency relation on the product: the \emph{graph product}, whereby $(x, y)$ is adjacent to $(x', y')$ if $x = x'$ and $y\sim_Y y'$, or $x \sim_X x'$ and $y = y'$.  The notion we use is sometimes called the  \emph{strong product}, in a graph theory setting.  Our definition of (adjacency on) the product means that it is the categorical product, in the category of (finite) digital images and digitally continuous maps.  This point is explained in the following statement.

\begin{lemma}\label{lem: product}
For digital images $X\subseteq \Z^m$ and $Y\subseteq \Z^n$, the projections onto either factor $p_1\colon X \times Y \to X$ and $p_2\colon X \times Y \to Y$ are continuous.  Suppose given maps of digital images $f\colon A \to X$ and $g\colon A \to Y$.  Then there is a unique map, which we write $(f, g)\colon A \to X \times Y$ that satisfies $p_1\circ (f, g) = f$ and $p_2\circ (f, g) = g$.
\end{lemma}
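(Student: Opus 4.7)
The plan is to verify the three standard pieces of the universal property of a categorical product: continuity of the projections, existence of the induced map $(f,g)$, and its uniqueness. All three should follow quickly from unwinding \defref{def: products} and the definition of continuity; the adjacency on $X \times Y$ has been set up precisely to make this argument straightforward.

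First, I would handle continuity of the projections $p_1$ and $p_2$. Suppose $(x,y) \sim_{X\times Y} (x',y')$. By \defref{def: products}, this means $x \sim_X x'$ and $y \sim_Y y'$, so in particular $p_1(x,y) = x \sim_X x' = p_1(x',y')$, and similarly for $p_2$. Hence $p_1$ and $p_2$ are continuous.

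Next, for existence, define $(f,g)\colon A \to X\times Y$ by $(f,g)(a) = (f(a), g(a))$. Continuity is immediate: if $a \sim_A a'$ in $A$, then $f(a) \sim_X f(a')$ by continuity of $f$, and $g(a) \sim_Y g(a')$ by continuity of $g$, so $(f(a), g(a)) \sim_{X\times Y} (f(a'), g(a'))$ by the definition of the product adjacency. The identities $p_1\circ (f,g) = f$ and $p_2\circ (f,g) = g$ hold by construction.

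Finally, for uniqueness, suppose $h\colon A \to X\times Y$ also satisfies $p_1 \circ h = f$ and $p_2 \circ h = g$. Writing $h(a) = (h_1(a), h_2(a))$ with $h_1 = p_1 \circ h$ and $h_2 = p_2 \circ h$, we get $h_1(a) = f(a)$ and $h_2(a) = g(a)$ for every $a \in A$, so $h = (f,g)$. I do not anticipate any real obstacle here; the only subtlety worth flagging is that the argument for continuity of $(f,g)$ relies essentially on the strong-product adjacency rather than the graph-product adjacency, which is exactly the point the surrounding paragraph is making.
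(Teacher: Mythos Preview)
Your proof is correct and follows the same approach as the paper's, just with more explicit detail; the paper defines $(f,g)(a) = \big(f(a), g(a)\big)$, notes that continuity of the projections and of $(f,g)$ are immediate from the definitions, and that uniqueness is evident from the coordinate functions.
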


\begin{proof}
The first assertion follows immediately from the definitions.   The map $(f, g)$ is defined as $(f, g)(a) = \big( f(a), g(a) \big)$.  It is immediate from the definitions that this map is continuous.  This is evidently the unique map with the suitable coordinate functions.
\end{proof}

\begin{example}\label{ex: diagonal}
For $X \subseteq \Z^n$ a digital image, the \emph{diagonal map}
$$\Delta\colon X \to X \times X \subseteq \Z^n \times \Z^n \cong \Z^{2n}$$
 is defined as $\Delta(x) = (x, x)$ for each $x \in X$.   Suppose we have $X = I_1 \subseteq \Z$, with  $\Delta\colon I_1 \to I_1 \times I_1$.  Since $0 \sim_X 1$, we need $(0,0) \sim_{X \times X} (1, 1)$ if the diagonal is to be continuous, which of course we do have with our conventions.
\end{example}

Because of the rectangular nature of the digital setting, it is often convenient to consider the \emph{product of maps}, as follows.

\begin{definition}\label{def: map product}
Given functions of digital images $f_i \colon X_i \to Y_i$ for $i = 1, \ldots, n$, we define the \emph{product} function
$$f_1 \times \cdots \times f_n \colon X_1 \times \cdots \times  X_n \to Y_1 \times \cdots \times Y_n$$
as $(f_1 \times \cdots \times f_n) (x_1, \ldots, x_n) = \big(f_1(x_1), \ldots, f_n(x_n) \big)$.
\end{definition}

\begin{lemma}\label{lem: map product conts}
Given continuous maps of digital images $f_i \colon X_i \to Y_i$ for $i = 1, \ldots, n$, their product $f_1 \times \cdots \times f_n$ is a (continuous) map.
\end{lemma}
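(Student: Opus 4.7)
The plan is to reduce the claim to a direct verification by unwinding definitions. First I would observe that although \defref{def: products} is stated for binary products, iterating it---or equivalently invoking the paper's blanket convention that $\Z^N$ always carries maximal adjacency---gives the coordinate-wise description of adjacency on an $n$-fold product: $(x_1,\ldots,x_n) \sim_{X_1 \times \cdots \times X_n} (x_1',\ldots,x_n')$ if and only if $x_i \sim_{X_i} x_i'$ for every $i = 1,\ldots,n$. This is the only mildly nontrivial step, and it is handled transparently by the conventions already in force.

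With that in hand, I would verify continuity of the product map directly. Assume $(x_1,\ldots,x_n) \sim_{X_1\times\cdots\times X_n} (x_1',\ldots,x_n')$. By the coordinate-wise criterion, $x_i \sim_{X_i} x_i'$ for each $i$, and continuity of each $f_i$ then gives $f_i(x_i) \sim_{Y_i} f_i(x_i')$. Applying the coordinate-wise criterion once more yields
$$\bigl(f_1(x_1),\ldots,f_n(x_n)\bigr) \sim_{Y_1\times\cdots\times Y_n} \bigl(f_1(x_1'),\ldots,f_n(x_n')\bigr),$$
which is precisely the statement that $f_1 \times \cdots \times f_n$ sends adjacent points to adjacent points.

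I do not anticipate any real obstacle: beyond the initial observation that the product adjacency in \defref{def: products} iterates correctly to $n$ factors, the proof is a one-line chase through the definitions of product adjacency and continuity.
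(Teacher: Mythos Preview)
Your proposal is correct and matches the paper's approach exactly: the paper's proof reads in its entirety ``This follows directly from the definitions,'' and you have simply spelled out that direct verification. The only extra content you supply---noting that the binary product adjacency of \defref{def: products} iterates to $n$ factors---is implicit in the paper's conventions and requires no further justification.
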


\begin{proof}
This follows directly from the definitions.
\end{proof}

We will make use of the product of maps towards the end of the following section and in the sequel.  This gives another reason for why we want the product of digital images to be defined as in \defref{def: products}.

\section{Subdivision}\label{sec: subdivision}

The notion of \emph{subdivision of a digital image} plays a fundamental role in our development of ideas in the digital setting, and  is a main focus of this paper.

\begin{definition}\label{def: subdivision}
Suppose that $X\subseteq \Z^n$ is an $n$-dimensional digital image.  For each $k \geq 2$, we have the $k$-\emph{subdivision of $X$}, which is an auxiliary (to $X$) $n$-dimensional digital image  denoted by $S(X, k)\subseteq \Z^n$, together with a canonical map or \emph{standard projection}
$$\rho_k\colon  S(X, k) \to X$$
that is continuous in our digital sense.  For a real number $x$, denote by $\lfloor x \rfloor$ the greatest integer less-than-or-equal-to $x$ (the \emph{integer floor of $x$}).  First, make
the $\Z[1/k]$-lattice in $\R^n$, namely, those points with coordinates each of which is $z/k$ for some integer $z$, and then set
$$S'(X, k) = \left\{ (x_1, \ldots, x_n) \in \left(\Z\left[\frac{1}{k}\right]\right)^n  \mid ( \lfloor x_1 \rfloor, \ldots, \lfloor x_n \rfloor) \in X \right\}.$$
Then set
$$ S(X,k) = \left\{ (kx_1, \ldots, kx_n) \in \Z^n  \mid (x_1, \ldots, x_n) \in S'(X, k) \right\}.$$
The map $\rho_k$ is given by $\rho_k\big( (y_1, \ldots, y_n) \big) = ( \lfloor y_1/k \rfloor, \ldots, \lfloor y_n/k \rfloor)$, and one checks that this map is continuous.

For $x \in X$ an individual point, we write $S(x, k) \subseteq S(X, k)$ for the points $y \in S(X, k)$ that satisfy $\rho_k(y) = x$.  If $x = (x_1, \ldots, x_n)$ is a point in an $n$-dimensional digital image, then we may describe this set in general as
\begin{equation}\label{eq: point subdn}
S(x, k) = \{ (kx_1 + r_1, \ldots, kx_n + r_n) | 0 \leq r_i \leq k-1 \}.
\end{equation}
That is, for each $x \in X$, $S(x, k)$ is an $n$-dimensional cubical lattice in $\Z^n$ with each side of the cubical lattice containing $k$ points.  Notice that  the result of subdivision therefore depends on the ambient space of the digital image.
\end{definition}

Occasionally, it may be convenient to extend \defref{def: subdivision} to include $k=1$, in which case we use the notational convention that $S(X, 1) = X$, and $\rho_1\colon  S(X, 1) \to X$ is just the identity map of $X$.

\begin{example}\label{ex: subdn}
Generally, subdivision of an interval $I_N \subseteq \Z$ gives a longer interval: We have $S(I_N, k) = I_{Nk+k-1} \subseteq \Z$.
Suppose that we have $X = I_2 = [0, 2] \subseteq \Z^2$.  Then we have $S(X, 2) = I_5 =   [0,  5] \subseteq \Z$, and $\rho_2\colon S(X, 2) \to X$ is given by $\rho_2(0) = \rho_2(1) = 0$, $\rho_2(2) = \rho_2(3) = 1$, and $\rho_2(4) = \rho_2(5) = 2$.   In \figref{fig:subd interval}, we indicate the way in which, for the same interval $I_2$,  the projection $\rho_3 \colon S(I_2, 3) \to I_2$ aggregates points in the subdivided interval to map them back to the original.
\begin{figure}[h!]
\centering
\includegraphics[trim=130 620 100 120,clip,width=\textwidth]{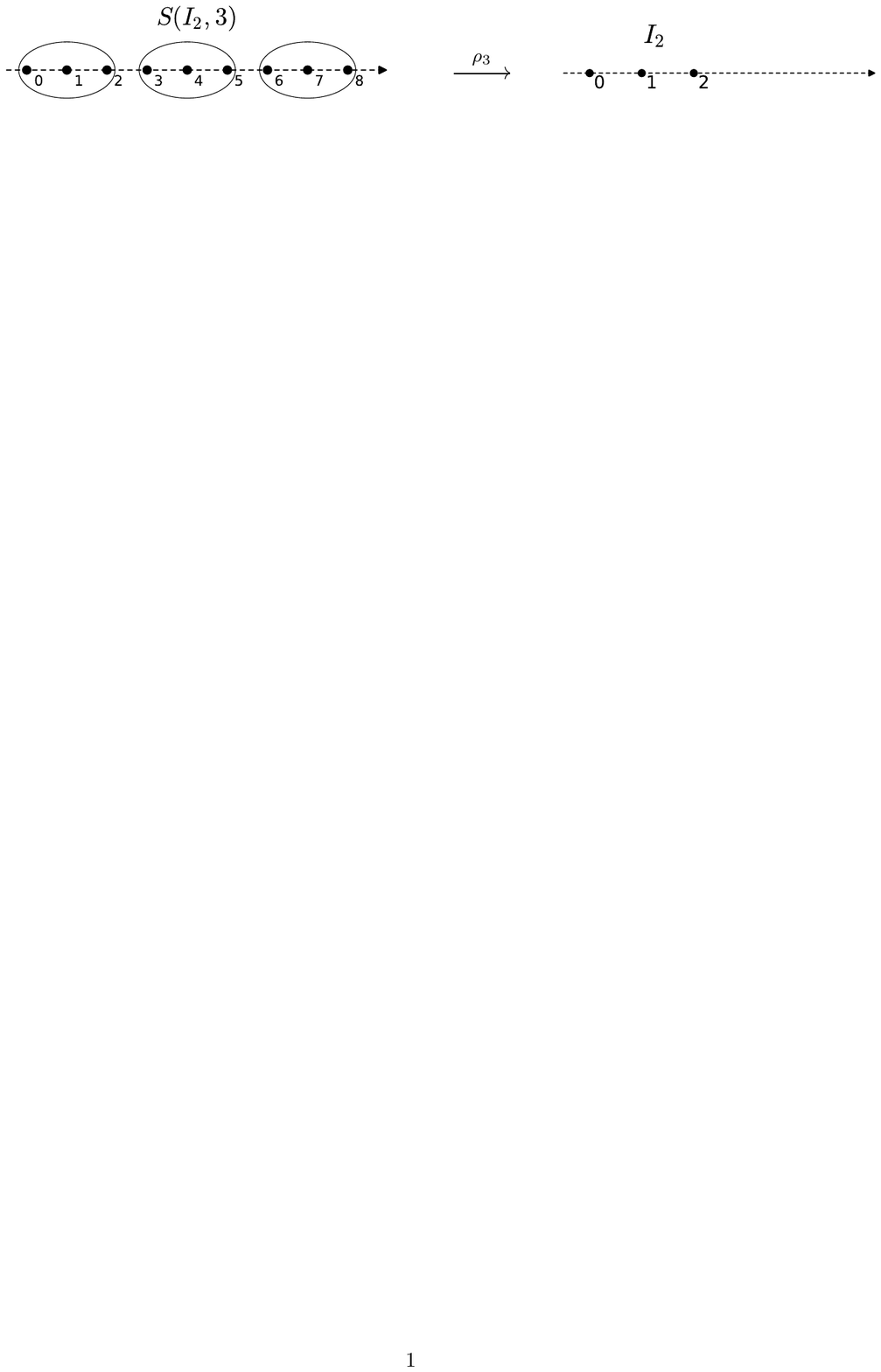}
\caption{\label{fig:subd interval} Aggregation of points by $\rho_3\colon S(I_2, 3) = I_8 \to I_2$.}
\end{figure}
We also note here that $S(I_0, k) = S(\{0\}, k) = I_{k-1}$

As a two-dimensional example, suppose that we have $X = \{ (0,0), (1, 1) \}  \subseteq \Z$.  Then $S(X, 2) = \{ (0,0), (1, 0), (0,1), (1, 1), (2,2), (2, 3), (3,2), (3, 3) \}$, and we have
$\rho_2\colon S(X, 2) \to X$  given by $\rho(0,0) = \rho(1, 0) =  \rho(0,1) = \rho(1, 1)  = (0, 0)$, and $\rho(2,2) = \rho(2, 3) =  \rho(3,2) = \rho(3, 3)  = (1, 1)$.  Finally, in \figref{fig:subd diamond}, we show the points of $S(D, 2)$, with $D$ the diamond as in \figref{fig:D & C} above, and indicate the way in which the points of $S(D, 2)$ are aggregated by the projection $\rho_2 \colon S(D, 2) \to D$.
\begin{figure}[h!]
\centering
\includegraphics[trim=130 520 120 110,clip,width=\textwidth]{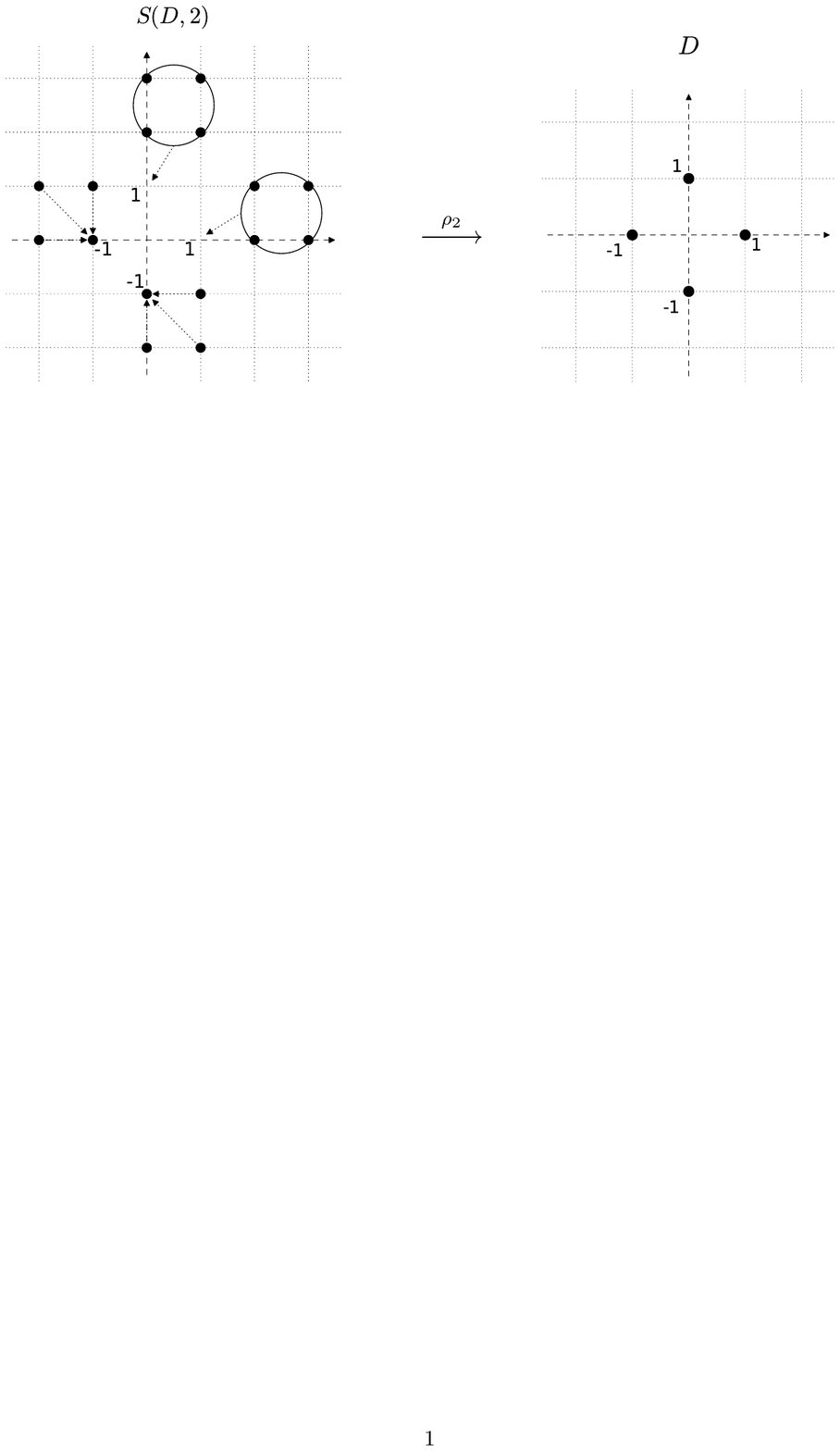}
\caption{\label{fig:subd diamond} Aggregation of points by $\rho_2\colon S(D, 2) \to D$.}
\end{figure}
\end{example}

Subdivision behaves well with respect to products.  For any digital images $X \subseteq \Z^m$ and $Y \subseteq \Z^n$ and any $k \geq 2$ we have an isomorphism of digital images
$$S(X \times Y, k) \cong S(X, k) \times S(Y, k)$$
and, furthermore, the standard projection $\rho_k\colon S(X \times Y, k) \to X \times Y$ may be identified with the product of the standard projections on $X$ and $Y$, thus:
$$\rho_k = \rho_k \times \rho_k\colon  S(X, k) \times S(Y, k) \to X \times Y.$$
Note also that we may iterate subdivision.  It is straightforward to check that, for any $k, l \geq 1$, we have an isomorphism of digital images
$S\big( S(X, k), l\big) \cong S(X, kl)$.

\begin{example}\label{ex: product subdns}
We mentioned above that, for $I_0 = \{0\} \subseteq \Z$, we have  $S(I_0, k) = S(0, k) = I_{k-1}$.  For the origin $\mathbf{0} = (0, \ldots, 0) \in \Z^n$, we have $S(\mathbf{0}, k) = (I_{k-1})^n$, an $n$-cube in $\Z^n$, and we may identify the projection $\rho_k\colon (I_{k-1})^n \to \{ \mathbf{0}\}$ as a product of projections
$$\rho_k \times \cdots \times \rho_k \colon I_{k-1} \times \cdots \times I_{k-1} \to I_0  \times \cdots \times I_0.$$
More generally, for any $x \in \Z$, we have
$$S(x, k) = [kx, kx+k-1] = \{ kx + r \mid 0 \leq r \leq k-1 \}.$$
If $x = (x_1, \ldots, x_r) \in \Z^n$, then we have
$$
\begin{aligned}
S(x, k) &=  \{ (kx_1 + r_1, \ldots, kx_n + r_n )\mid 0 \leq r_i \leq k-1 \} \\
&= [kx_1, kx_1+k-1] \times \cdots \times [kx_n, kx_n+k-1]\\
&= S(x_1, k) \times \cdots \times S(x_n, k).
\end{aligned}
$$
These descriptions make plain that we may identify the projection $\rho_k\colon S(x, k) \to \{ x\}$ with the product of projections
$$\rho_k \times \cdots \times \rho_k \colon S(x_1, k) \times \cdots \times S(x_n, k) \to \{ x_1\}  \times \cdots \times \{ x_1\}.$$
\end{example}

By an \emph{inclusion of digital images} (of the same dimension) $j \colon A \to X \subseteq \Z^n$ we mean that $A$ is a subset of $X$ (the coordinates of a point of $A$ remain the same under inclusion into $X$).
It is easy to see that, given an inclusion of digital images  $j \colon A \to X \subseteq \Z^n$, we have an obvious corresponding continuous  inclusion of subdivisions $S(j, k)\colon S(A, k) \to S(X, k)$ such that the diagram
$$\xymatrix{
S(A, k) \ar[d]_{\rho_k} \ar[r]^{S(j, k)} & S(X, k) \ar[d]^{\rho_k}\\
A \ar[r]_{j} & X}$$
commutes.  We say that the map  $S(j, k)$ \emph{covers} the map $j$.   Indeed, we may give an explicit formula as follows.  For each point  $a \in A$, write $a = (a_1, \ldots, a_n)$. Also, write $t = (t_1, \ldots, t_n)$, with $0 \leq t_1, \ldots, t_n \leq k-1$, for a typical point $t$ in the cubical $k \times k \times \cdots \times k$ lattice $(I_{k-1})^n\subseteq \Z^n$.  Then the points of $S(a, k) \subseteq S(A, k)$ may be written as
$$S(a, k) = \{ k \,a + t \mid t \in (I_{k-1})^n \} = \{ (ka_1 + t_1, \ldots, ka_n+t_n) \mid 0 \leq t_1, \ldots, t_n \leq k-1 \},$$
with $\rho_k( k\, a + t) = a$ for all $t \in (I_{k-1})^n$.  Here, the scalar multiple $k\,a$ and the  sum $k\,a + t$ denote coordinate-wise (vector) scalar multiplication and addition in $\Z^n$.   Then $S(j, k)\colon S(A, k) \to S(X, k)$ may be written as
\begin{equation}\label{eq: canonical subdn map}
S(j, k)\big(  k\,a + t  \big) =   k\,j(a) + t,
\end{equation}
where $j(a) = (a_1, \ldots, a_n) \in X$.  It is easy to confirm that this gives a (continuous) map.

For a more general map $f \colon X \to Y$, however, it is not so clear how we should construct a map of subdivisions that covers the map, in the sense of a filler---a map that occupies the place of the dotted arrow---for the following (commutative) diagram:
$$\xymatrix{
S(X, k) \ar[d]_{\rho_k} \ar@{.>}[r] & S(Y, k) \ar[d]^{\rho_k}\\
X \ar[r]_{f} & Y}$$
In fact, it is not even obvious that such a map of subdivisions always exists, in general.  In this paper we show that such a map does exist for arbitrary maps of digital images with 1D and 2D domains.  However, as the next several examples illustrate, the formulation of \eqref{eq: canonical subdn map} will not provide such a map in general.

\begin{example}\label{ex: no subdn interval}
Consider the constant map of 1D digital images $c\colon I_1 \to I_0=\{ 0 \}$, given by $c(1) = c(0) = 0$.  If we use the formulation of \eqref{eq: canonical subdn map} above to define a function
$$S(c, k)\colon S(I_1, k) \to S( I_0, k)$$
as $S(c, k)\big(  k\,a + t  \big) =   k\,c(a) + t$, then we have $S(c, k)( k-1) = S(c, k)(k\cdot 0 + k-1) = k\, c(0) + k-1 = k-1$ but  $S(c, k)( k)  = S(c, k)(k\cdot 1 + 0) = k\, c(1) + 0 =0$.  Then $k-1 \sim_{S(I_1, k)} k$ but $k-1 \not\sim_{S( I_0, k)} 0$ unless $k = 2$: the function $S(c, k)$ is not continuous for $k \geq 3$.  See \figref{fig:Ex2-7} for an illustration of this situation.
\begin{figure}[h!]
\centering
\includegraphics[trim=140 560 100 130,clip,width=\textwidth]{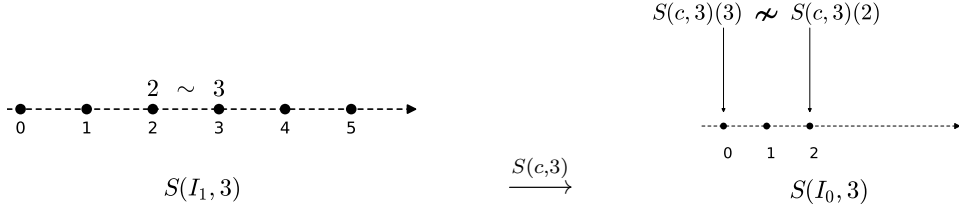}
\caption{\label{fig:Ex2-7} $S(c, k)\colon S(I_1, k) \to S( I_0, k)$ is not continuous ($k = 3$ pictured; we have $S(c, 3)(2) = 2$ but $S(c, 3)(3) = 0$).}
\end{figure}
In this example, defining $C \colon S(I_1, k) \to S( I_0, k)$ as a constant map, $C(k\,a + t) = 0$, for instance, gives a continuous map that covers $c$.  But the point here is, that it is not obvious how to adapt a covering map of subdivisions depending on the given map.
\end{example}

The issue is not confined to functions that coalesce points together, either.  Here are two examples of injective maps for which $S(f, k)$, defined as in \eqref{eq: canonical subdn map} above, fails to be continuous.

 \begin{example}\label{ex: no subdivision map}
 (a) Consider the map $f\colon I_1 \to I_1$ given by $f(0) = 1$ and $f(1) = 0$.  The function  $S(f, k)\colon S(I_1, k) = I_{2k-1} \to S(I-1, k)  = I_{2k-1}$ defined by the formulation of \eqref{eq: canonical subdn map} above  gives
$$S(f, k)(k-1) = 2k-1 \quad \text{and} \quad S(f, k)(k) = 0.$$
Since $2k-1 \not\sim 0$, this function is not continuous for any $k$.   See \figref{fig:Ex2-8a}, in which we have indicated the way in which the projections $\rho_k \colon S(I_1,k) \to I_1$ aggregate points.
\begin{figure}[h!]
\centering
\includegraphics[trim=145 470 20 115,clip,width=\textwidth]{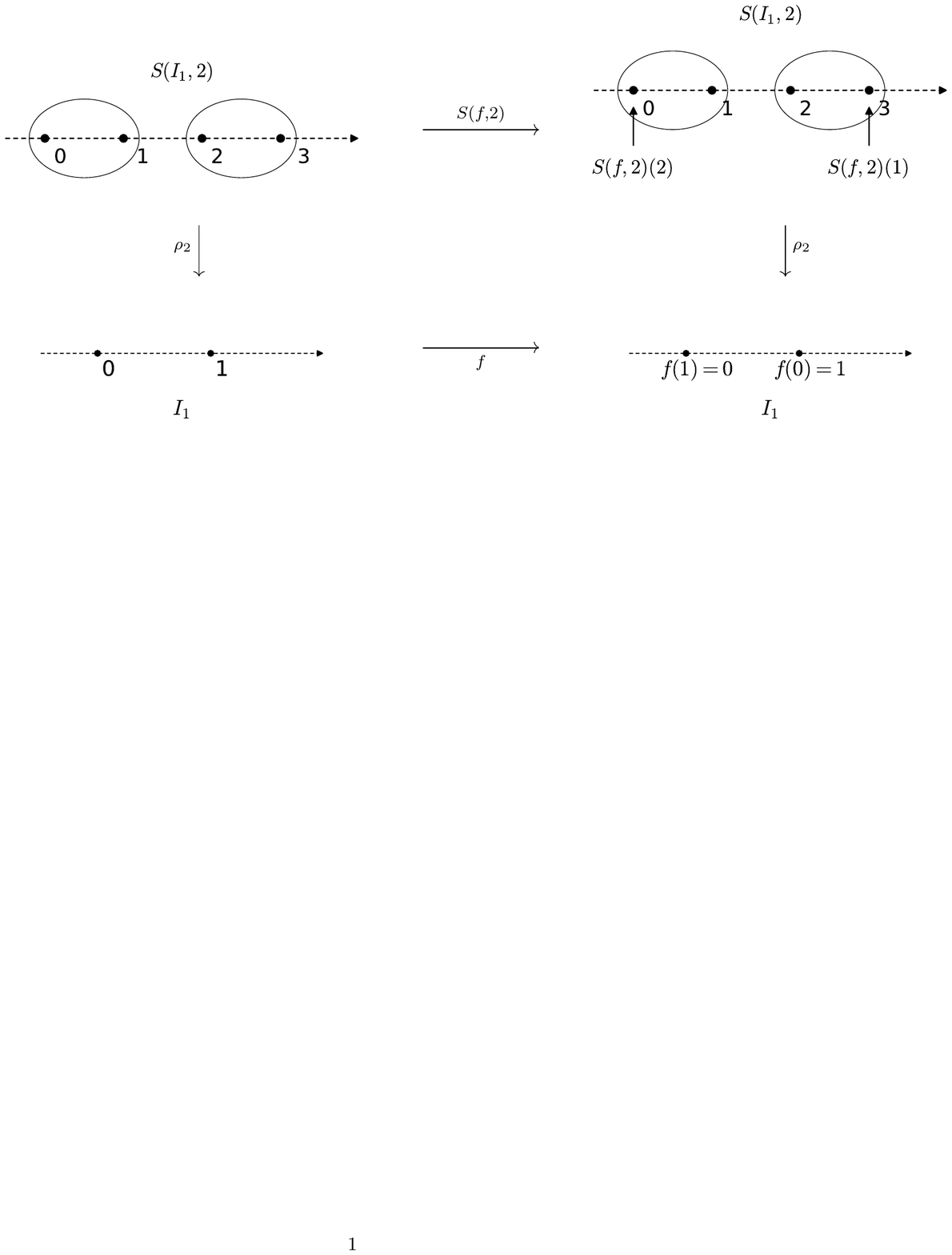}
\caption{\label{fig:Ex2-8a} $S(f, k)\colon S(I_1, k) \to S( I_1, k)$ is not continuous ($k = 2$ pictured).}
\end{figure}

 (b) (Similar to an observation illustrated in  \cite[Fig.1]{B-S16}.)  Consider the map $f \colon X \to Y$ with $X = \{(0, 0), (1, 0), (0, 1) \} \subseteq \Z^2$, $Y = \{(0, 0), (1, 1) \} \subseteq \Z^2$, and $f$ given by
 $$ f(0,0) = (0,0), \quad   f(1,0) = (1,1), \quad  f(0,1) = (1,1).$$
The function  $S(f, 2)\colon S(X, 2) \to S(Y, 2)$ defined by the formulation of \eqref{eq: canonical subdn map} above  gives
$$(0, 0) \mapsto (0, 0), \quad (1, 0) \mapsto (1, 0), \quad (0, 1) \mapsto (0, 1), \quad (1, 1) \mapsto (1, 1)$$
on the four points of $S\big( (0, 0), 2 \big)$.
Likewise for the four points in $S\big( (1, 0), 2 \big)$,  $S(f, 2)$ would give
$$(2, 0) \mapsto (2, 2), \quad (2, 1) \mapsto (2, 3), \quad (3, 0) \mapsto (3, 2), \quad (3, 1) \mapsto (3, 3).$$
\begin{figure}[h!]
\centering
\includegraphics[trim=140 350 50 120,clip,width=\textwidth]{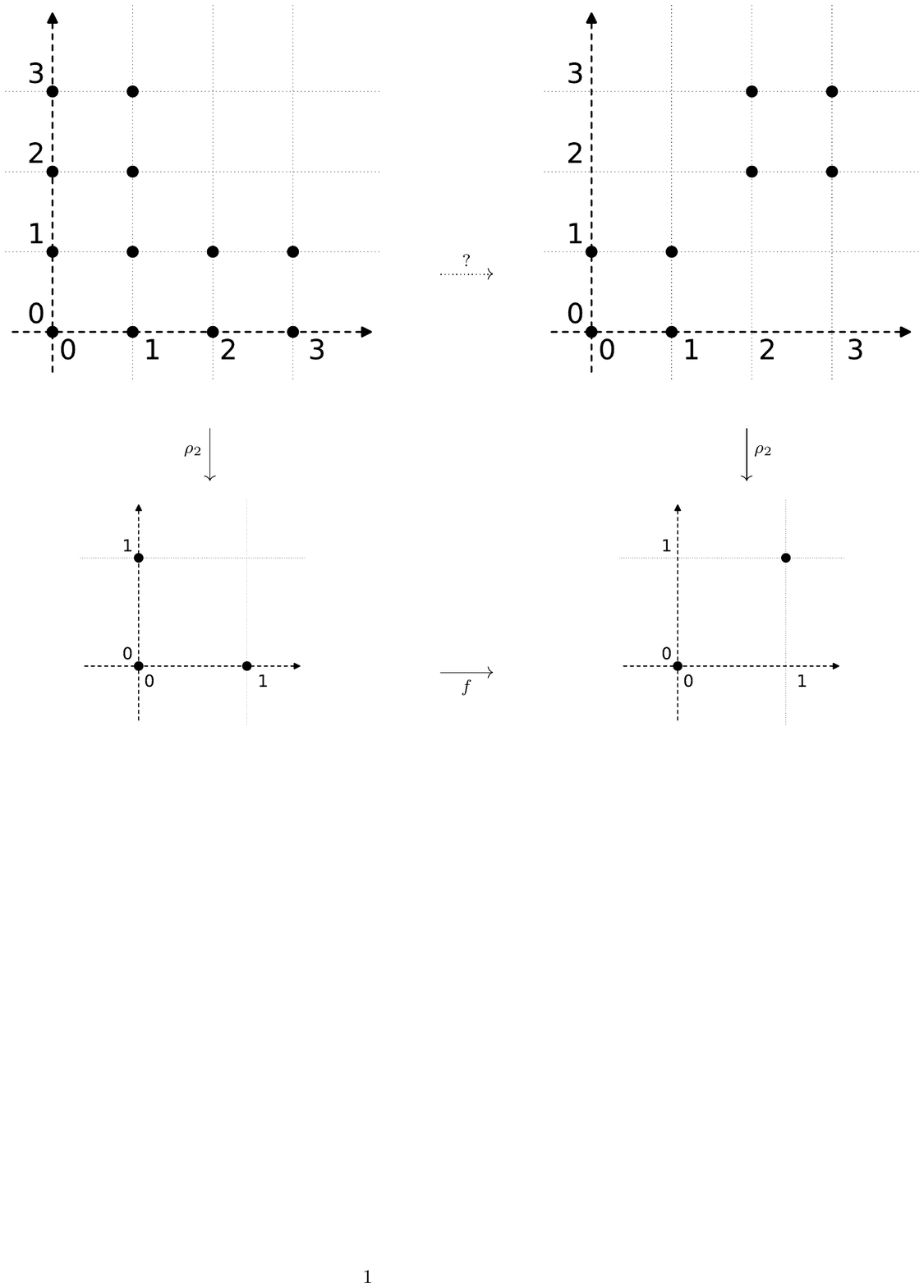}
\caption{\label{fig:nosubd} How to map subdivisions?}
\end{figure}
But this would result in adjacent points $(1, 0) \sim_{S(X, 2)} (2, 1)$ being mapped to non-adjacent points $(1, 0) \not\sim_{S(Y, 2)} (2, 3)$, for example.    The situation is summarized in  \figref{fig:nosubd}, in which we want a filler  $S(X, 2) \to S(Y, 2)$ that makes the diagram commute.  Notice one feature of this example, in particular.  Although we have $f(0, 0) = (0,0)$, it is not possible for a covering map of $f$ to restrict to the identity $S\big( (0, 0), k\big) \to S\big( (0, 0), k\big)$.  For $k=2$, for instance, we see in \figref{fig:nosubd}  that $(0, 1) \sim_X (0, 2)$ and  $(0, 1) \sim_X (1, 2)$,  but any covering map of $f$ must map both $(0, 2)$ and $(1, 2)$ to points of $S\big( (1, 1), 2\big)$ in $S(Y, 2)$, none of which are adjacent to $(0, 1) \in S(Y, 2)$. That is, the possibilities for a covering map are constrained by how surrounding points are mapped by $f$, and not just by how the points themselves are mapped.    In this example, it is not so clear how one should associate a continuous map $S(X, 2) \to S(Y, 2)$ to the original $f$, as part of a methodical scheme for doing so.
 \end{example}

 In the next two Sections, we will give methodical constructions that, in particular, provide covering maps of subdivisions in the examples above.  A more general question,  special cases of which are also resolved in the following sections, is to ask how---or whether---a map of digital images of different dimensions might induce a covering map of subdivisions.

We close this section on subdivision with some constructions that we use in the following section and in the sequel.  The projection $\rho_k \colon S(X, k) \to X$ may be factored---written as a composition---in various ways.  For example, if $k = pq$, then we may write
$$\rho_k = \rho_p\circ \rho_q \colon S(X, k) \to S(X, p) \to X.$$
A different sort of ``partial projection" that may also be used to factor $\rho_k$ is as follows.

\begin{definition}\label{def: rho^c}
For any $x \in \Z$ and any  $k \geq 2$, recall that the subdivision $S(x, k)$ may be described as $S(x, k) = [xk, xk + k-1]$.    Then, for $k \geq 3$, define a function
$$\rho^c_{k} \colon S(x, k) \to S(x, k-1)$$
as
$$\rho^c_{k}(xk + j) = \begin{cases}  xk + j & 0 \leq j \leq  \lfloor k/2 \rfloor -1\\
xk + j-1 &  \lfloor k/2 \rfloor \leq j \leq  k-1.\end{cases}
$$
Next, for any $x = (x_1, \ldots, x_n) \in \Z^n$, with the identifications from \exref{ex: product subdns} of
$$S(x, k) = S(x_1, k) \times \cdots \times S(x_n, k)$$
and
$$S(x, k-1) = S(x_1, k-1) \times \cdots \times S(x_n, k-1),$$
define $\rho^c_{k} \colon S(x, k) \to S(x, k-1)$ as the product of functions
$$\rho^c_{k}\times \cdots \times \rho^c_{k} \colon S(x_1, k) \times \cdots \times S(x_n, k) \to  S(x_1, k-1) \times \cdots \times S(x_n, k-1).$$
Finally, for any digital image $X \subseteq \Z^n$, define
$$\rho^c_{k} \colon S(X, k) \to S(X, k-1)$$
by viewing each subdivision as a (disjoint) union
$$ S(X, k) = \coprod_{x \in X}\ S(x, k) \qquad \text{and} \qquad S(X, k-1) = \coprod_{x \in X}\ S(x, k-1)$$
and assembling a global  $\rho^c_{k}$ on $S(X, k)$ from the individual $\rho^c_{k} \colon S(x, k) \to S(x, k-1)$ as just defined.
\end{definition}

\begin{proposition}\label{prop: rho^c conts}
For $k \geq 3$, the \emph{partial projection}  $\rho^c_{k} \colon S(X, k) \to S(X, k-1)$ is continuous.
\end{proposition}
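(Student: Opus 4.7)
The plan is to break continuity of $\rho^c_k$ into two halves: verifying continuity on each individual ``fiber'' $S(x,k)$, and then verifying that adjacencies between points from different fibers $S(x,k)$ and $S(x',k)$ (with $x\sim_X x'$) are preserved. Since by construction $\rho^c_k$ is assembled from the partial projections on each $S(x,k)$, both pieces are needed.

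First, I would treat the 1D, single-point case $\rho^c_k\colon S(x,k) \to S(x,k-1)$ for $x\in\Z$. Adjacent points in $S(x,k) = [xk, xk+k-1]$ are consecutive integers $xk + j$ and $xk + j + 1$ for some $0\leq j \leq k-2$. I would dispatch three sub-cases: (i) $j+1 \leq \lfloor k/2\rfloor - 1$, where both points fall in the first branch of the piecewise definition and their images are again consecutive integers; (ii) $j = \lfloor k/2\rfloor - 1$, the transition case, where $xk+j$ lies in the first branch with image $xk+j$, while $xk+j+1$ lies in the second branch with image $xk+(j+1)-1 = xk+j$ as well, so the two points are identified and certainly adjacent; and (iii) $j \geq \lfloor k/2\rfloor$, where both fall in the second branch and images are once more consecutive. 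Next, by \lemref{lem: map product conts}, the $n$-fold product $\rho^c_k\times\cdots\times\rho^c_k$ gives a continuous map $S(x,k)\to S(x,k-1)$ for each $x=(x_1,\ldots,x_n)\in\Z^n$, via the product identification from \exref{ex: product subdns}.

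The main obstacle is the between-fibers step. Suppose $y \in S(x,k)$ and $y' \in S(x',k)$ are adjacent in $S(X,k)$ with $x \neq x'$ in $X$; I need $\rho^c_k(y) \sim \rho^c_k(y')$ in $S(X,k-1)$. Working coordinate by coordinate: for those $i$ with $x_i = x'_i$, the required $|\rho^c_k(y)_i - \rho^c_k(y')_i| \leq 1$ reduces to the 1D analysis above. For any $i$ with $x_i \neq x'_i$, the adjacency $x \sim_X x'$ forces $|x_i - x'_i| = 1$, and the constraint $|y_i - y'_i|\leq 1$ then forces $y_i$ and $y'_i$ to sit at the opposing endpoints of their respective intervals $S(x_i,k)$ and $S(x'_i,k)$ (say $y_i = x_ik$ and $y'_i = x_ik -1 = x'_ik + k -1$ when $x'_i = x_i-1$). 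The crucial observation is that the piecewise definition of $\rho^c_k$ sends the left endpoint $j=0$ of $S(x_i,k)$ to the left endpoint of $S(x_i,k-1)$, and the right endpoint $j=k-1$ (which lies in the second branch since $k-1\geq \lfloor k/2\rfloor$ for $k\geq 3$) to the right endpoint of $S(x_i,k-1)$. So images of opposite endpoints of neighboring fibers remain opposite endpoints of the corresponding neighboring fibers downstairs, and in particular differ by exactly $1$.

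The hardest point is really just this endpoint consistency: one must verify that the threshold $\lfloor k/2\rfloor$ is interior to $\{0,1,\ldots,k-1\}$ for $k\geq 3$ (which is why $k\geq 3$ is required), so that the index being ``collapsed'' never sits at either endpoint $j=0$ or $j=k-1$. Given this, the endpoint-preservation property of the previous paragraph holds, and assembling the fiberwise continuous maps produces a globally continuous $\rho^c_k\colon S(X,k)\to S(X,k-1)$, as required.
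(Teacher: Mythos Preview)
Your proposal is correct and follows essentially the same approach as the paper: both arguments first verify continuity on each fiber $S(x,k)$ via the 1D case and products, then handle adjacencies between different fibers coordinate-by-coordinate, splitting according to whether $x_i = x'_i$ or $|x_i - x'_i| = 1$ and using the endpoint-to-endpoint behaviour in the latter case. Your presentation is slightly more conceptual (framing the key point as ``endpoint consistency'' and noting why $k\geq 3$ is needed), whereas the paper carries out the explicit arithmetic $(k-1)(x_i - x'_i) + (s_i - s'_i)$, but the underlying argument is the same.
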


\begin{proof}
For $x \in \Z$, the map of intervals $\rho^c_{k} \colon S(x, k) \to S(x, k-1)$ is easily seen to be continuous. Then, for any $x \in \Z^n$, we have defined
$\rho^c_{k} \colon S(x, k) \to S(x, k-1)$ as the product of individually continuous functions, hence it is also continuous.  It remains to confirm that the $\rho^c_{k}$ assemble together to give a globally continuous function on $S(X, k)$.

So suppose that we have $y  \in S(x, k)$ and $y' \in S(x', k)$ with $y \sim_{S(X, k)} y'$ and $x \not= x' \in X$.  Note, though,  that we must have $x \sim_X x'$, since $\rho_k(y) = x$,  $\rho_k(y') = x'$, and $\rho_k\colon S(X, k) \to X$ is continuous.   Write $x = (x_1, \ldots, x_n)$ and  $x' = (x'_1, \ldots, x'_n)$.  Then we have
$$y = (kx_1 + r_1, \ldots, kx_n + r_n) \qquad \text{and} \qquad y' = (kx'_1 + r'_1, \ldots, kx'_n + r'_n)$$
for $r_i, r'_i$ with $0 \leq r_i, r'_i \leq k-1$, each $i = 1, \ldots, n$.  Now for $\rho^c_{k}(y) \sim_{S(X, k-1)} \rho^c_{k}(y')$, it is necessary and sufficient that we have
$\rho^c_{k}(kx_i + r_i) \sim \rho^c_{k}(kx'_i + r'_i)$ in $S(x_i, k-1) \sqcup S(x'_i, k-1) \subseteq \Z$, for each $i$.  Write $\rho^c_{k}(kx_i + r_i) = (k-1)x_i + s_i$ and
$\rho^c_{k}(kx'_i + r'_i) = (k-1)x'_i + s'_i$, with the $s_i, s'_i$ satisfying  $0 \leq s_i, s'_i \leq k-2$ and determined as in \defref{def: rho^c}.  Then
$$\rho^c_{k}(kx_i + r_i) - \rho^c_{k}(kx'_i + r'_i) =   (k-1)(x_i - x'_i) + (s_i - s'_i),$$
and we must show that, for each $i = 1, \ldots, n$, we have
\begin{equation}\label{eq: rho^c difference}
-1 \leq  (k-1)(x_i - x'_i) + (s_i - s'_i) \leq 1.
\end{equation}
Because we have $x \sim_X x'$, it follows that, for each $i = 1, \ldots, n$, we have $-1 \leq x_i - x'_i \leq 1$.  For each $i$, there are three possibilities.  First, suppose that we have $x_i - x'_i = 1$.  Then $y \sim_{S(X, k)} y'$ entails that, in the $i$th coordinates, we have
$$1 \geq kx_i + r_i - (kx'_i + r'_i) = k(x_i - x'_i) + (r_i - r'_i) = k + r_i - r'_i.$$
Thus $r'_i \geq k -1 + r_i$ and the only possibility is that, in this coordinate, we have $r_i = 0$ and $r'_i = k-1$.  From \defref{def: rho^c}, then, we have $s_i = 0$ and $s'_i = k-2$ and
hence  $(k-1)(x_i - x'_i) + (s_i - s'_i) = k-1 -(k-2) = 1$, which satisfies \eqref{eq: rho^c difference}.    Second, suppose that we have $x_i - x'_i = -1$.  Then
$$-1 \leq kx_i + r_i - (kx'_i + r'_i) = k(x_i - x'_i) + (r_i - r'_i) = -k + r_i - r'_i,$$
thus $r_i \geq k -1 + r'_i$, and  we have $r'_i = 0$ and $r_i = k-1$.  From \defref{def: rho^c}, then, we have $s'_i = 0$ and $s_i = k-2$ and
in this case   $(k-1)(x_i - x'_i) + (s_i - s'_i) = -(k-1) +(k-2) = -1$, which also satisfies \eqref{eq: rho^c difference}.   Finally, suppose that we have  $x_i - x'_i = 0$.  Here,
$y \sim_{S(X, k)} y'$ entails that we have
$$-1 \leq kx_i + r_i - (kx'_i + r'_i) = k(x_i - x'_i) + (r_i - r'_i) = 0 + r_i - r'_i \leq 1,$$
so that $r_i$ and $r'_i$ differ by at most $1$.  From \defref{def: rho^c}, if $\{r_i, r'_i\} \subseteq  [0, \lfloor k/2 \rfloor -1]$ or if $\{r_i, r'_i\} \subseteq  [\lfloor k/2 \rfloor, k -1]$, then we have
$(s_i - s'_i) = (r_i - r'_i)$ and so $-1 \leq s_i - s'_i \leq 1$.  The only other possibility is that we have  $\{r_i, r'_i\} =  \{ \lfloor k/2 \rfloor -1, \lfloor k/2 \rfloor \}$ in which case $s_i =s'_i$ and so $s_i - s'_i = 0$.   Wherever $r_i$ and $r'_i$ fall in $[0, k-1]$, then, we have $(k-1)(x_i - x'_i) + (s_i - s'_i) = (s_i - s'_i)$ which satisfies $|s_i - s'_i| \leq 1$ and \eqref{eq: rho^c difference} is again satisfied.  The result follows.
\end{proof}

\begin{corollary}\label{cor: factor rho}
Let $X \subseteq \Z^n$ be any digital image.  For any $k \geq 3$, we may factor the projection $\rho_k\colon S(X, k) \to X$ as
$$\rho_k = \rho_{k-1}\circ \rho^c_k\colon S(X, k) \to S(X, k-1) \to X,$$
with $\rho_{k-1}\colon S(X, k-1) \to X$ the standard projection and $\rho^c_k\colon S(X, k) \to S(X, k-1)$ the partial projection map from \defref{def: rho^c}.
\end{corollary}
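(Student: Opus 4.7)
The plan is to verify the factorization pointwise, using the fact that $\rho^c_k$ has been defined precisely so as to respect the fiber structure over $X$ of the standard projections. Continuity of the composition $\rho_{k-1} \circ \rho^c_k$ is automatic: $\rho^c_k$ is continuous by \propref{prop: rho^c conts}, and $\rho_{k-1}$ is continuous by the defining property of the standard projection in \defref{def: subdivision}.

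To verify the equality of maps, I would exploit the decomposition $S(X, k) = \coprod_{x \in X} S(x, k)$ used in \defref{def: rho^c}. Fix any $x = (x_1, \ldots, x_n) \in X$ and any point $y \in S(x, k)$, so that $y = (kx_1 + r_1, \ldots, kx_n + r_n)$ with $0 \leq r_i \leq k-1$. On the one hand, $\rho_k(y) = x$ directly from the formula $\rho_k(y_1, \ldots, y_n) = (\lfloor y_1/k \rfloor, \ldots, \lfloor y_n/k \rfloor)$, since $\lfloor (kx_i + r_i)/k \rfloor = x_i$ for each $i$. On the other hand, by construction $\rho^c_k$ is assembled from the partial projections on each block $S(x, k)$, so $\rho^c_k(y) \in S(x, k-1)$; explicitly, $\rho^c_k(y) = ((k-1)x_1 + s_1, \ldots, (k-1)x_n + s_n)$ for suitable $s_i$ with $0 \leq s_i \leq k-2$. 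Applying $\rho_{k-1}$ then gives $(\lfloor ((k-1)x_i + s_i)/(k-1) \rfloor)_i = (x_1, \ldots, x_n) = x$, as required.

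Thus $\rho_{k-1} \circ \rho^c_k$ and $\rho_k$ agree on each block $S(x, k)$, and hence on all of $S(X, k)$. There is no real obstacle here; the content of the result lies entirely in \defref{def: rho^c} and \propref{prop: rho^c conts}, which have arranged that $\rho^c_k$ sends the fiber of $\rho_k$ over $x$ into the fiber of $\rho_{k-1}$ over the same $x$.
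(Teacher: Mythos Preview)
Your proposal is correct and follows essentially the same approach as the paper: both reduce to checking the identity on each block $S(x,k)$ and observe that $\rho_k$ and $\rho_{k-1}\circ\rho^c_k$ are each constant (equal to $x$) there. Your version simply spells out the coordinate computation and the continuity remark more explicitly than the paper's two-line argument.
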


\begin{proof}
It is sufficient to check that the composition agrees with  $\rho_k$ on $S(x, k)  \subseteq S(X, k)$, for each $x \in X$.  But  when restricted to  $S(x, k)$, both $\rho_k$ and
$\rho_{k-1}\circ \rho^c_k$ are constant maps.
\end{proof}

\section{One-Dimensional Domains: Paths and Loops in $Y$}\label{sec: 1D}

For $Y \subseteq \Z^n$ a digital image and any  $N \geq 1$, a \emph{path of length $N$ in $Y$} is a continuous map $\alpha\colon I_N \to Y$.  Unlike in the ordinary (topological) homotopy setting, where any path may be taken with the fixed domain $[0, 1]$, in the digital setting we must allow paths to have different domains.    Recall from \exref{ex: subdn} that we obtain a longer interval when we subdivide an interval: $S(I_N, k) = I_{Nk+k-1} \subseteq \Z$.

In the following result, notice that the map of subdivisions that covers the given path is itself a path (of length $N(2k+1)+2k$) in the subdivided digital image $S(Y, 2k+1)$.

\begin{theorem}\label{thm: path odd subdivision map}
Suppose we are given $\alpha\colon I_N \to Y$, a path of length $N$ in any digital image $Y \subseteq \Z^n$.  For any odd $2k+1 \geq 3$,  there is  a canonical choice of map of subdivisions
$$\widehat{\alpha}\colon S(I_N, 2k+1) = I_{N(2k+1)+2k}  \to S(Y, 2k+1)$$
that covers the given path, in the sense that the following diagram commutes:
$$\xymatrix{ S(I_N, 2k+1) \ar[d]_{\rho_{2k+1}} \ar[r]^-{\widehat{\alpha}} & S(Y, 2k+1) \ar[d]^{\rho_{2k+1}}\\
I_N \ar[r]_-{\alpha} & Y}$$
\end{theorem}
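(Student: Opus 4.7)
The plan is to define $\widehat{\alpha}$ by sending the unique center of each cell $S(i,2k+1) \subseteq S(I_N,2k+1)$ to the unique center of the cell $S(\alpha(i),2k+1) \subseteq S(Y,2k+1)$, then filling in the intervening points by a coordinate-wise linear interpolation that takes exactly one unit step per position. The odd-subdivision hypothesis enters precisely because a cubical lattice of side length $2k+1$ has a distinguished central point at offset $(k,\ldots,k)$; this is what makes the construction canonical.

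Concretely, given a position $s \in I_{N(2k+1)+2k}$, I would handle three cases. For $i \in I_{N-1}$ and a ``link parameter'' $t \in \{0,1,\ldots,2k+1\}$, write $s = i(2k+1)+k+t$ and set
\[
\widehat{\alpha}(s) \;=\; (2k+1)\,\alpha(i)\;+\;(k,\ldots,k)\;+\;t\cdot d^{(i)}, \qquad d^{(i)} := \alpha(i+1)-\alpha(i) \in \{-1,0,1\}^n,
\]
using the adjacency $\alpha(i)\sim_Y \alpha(i+1)$. For the $k$ positions $s=0,\ldots,k-1$ and the $k$ positions $s=N(2k+1)+k+1,\ldots,N(2k+1)+2k$ that lie ``outside'' all the linking segments, extend $\widehat{\alpha}$ to be constant at the relevant center $(2k+1)\alpha(0)+(k,\ldots,k)$ or $(2k+1)\alpha(N)+(k,\ldots,k)$, respectively. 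The formula is well-defined because the $t=2k+1$ value of the $i$-th formula simplifies to $(2k+1)\alpha(i+1)+(k,\ldots,k)$, which agrees with the $t=0$ value of the $(i+1)$-th formula.

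The main verification is twofold. \emph{Continuity}: between adjacent positions $s$ and $s+1$ inside a link segment the value moves by exactly $d^{(i)} \in \{-1,0,1\}^n$, so consecutive images are adjacent in $\Z^n$; at the seams between a constant endpoint segment and the adjoining link the two definitions agree on the overlapping center. It remains to confirm that each intermediate point actually lies in $S(Y,2k+1)$, which is where I expect the main obstacle of the argument to sit. The key observation that resolves it is that, for the odd offset $k$, the floor $\lfloor((2k+1)\alpha(i)_j+k+td^{(i)}_j)/(2k+1)\rfloor$ equals $\alpha(i)_j$ for every $t\le k$ and equals $\alpha(i+1)_j$ for every $t\ge k+1$, \emph{simultaneously for all coordinates} $j$. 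So every link point $\widehat{\alpha}(s)$ aggregates either to $\alpha(i)$ or to $\alpha(i+1)$, and hence lies in $S(\alpha(i),2k+1)\cup S(\alpha(i+1),2k+1)\subseteq S(Y,2k+1)$. Crucially this sidesteps the difficulty of \exref{ex: no subdivision map}(b): we never need $Y$ to contain ``diagonal corners'' of a rectangle because all coordinates switch in lockstep at the single transition $t=k\leftrightarrow t=k+1$.

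\emph{Commutativity of the covering diagram}: The same floor computation shows $\rho_{2k+1}\big(\widehat{\alpha}(s)\big)=\alpha(i)$ when $t\le k$ (which is exactly when $\rho_{2k+1}(s)=i$) and $=\alpha(i+1)$ when $t\ge k+1$ (which is exactly when $\rho_{2k+1}(s)=i+1$). The endpoint cases are immediate, so $\rho_{2k+1}\circ \widehat{\alpha}=\alpha\circ \rho_{2k+1}$. The entire construction depends only on $\alpha$ and on the choice of centers forced by $2k+1$ being odd, which justifies calling $\widehat\alpha$ canonical. Were we to attempt the same with an even subdivision factor, there would be no distinguished center in each cell, and the forced symmetry that enables the simultaneous coordinate-flip would fail; this matches the behavior observed in \exref{ex: no subdn interval}.
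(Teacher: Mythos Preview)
Your proposal is correct and takes essentially the same approach as the paper: send centres to centres, linearly interpolate along each link using the displacement vector $d^{(i)}=\alpha(i+1)-\alpha(i)\in\{-1,0,1\}^n$, and hold constant on the outer $k$ points at each end. Your direct floor computation for the fibrewise condition is equivalent to the paper's verification via the description $S(y,2k+1)=\{\overline{y}+(r_1,\ldots,r_n)\mid -k\le r_i\le k\}$, and your handling of the seams (allowing $t$ to run to $2k+1$ and checking consistency) amounts to the paper's separate continuity check between adjacent link segments.
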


Our proof consists of an algorithmic construction of the covering map (or path) $\widehat{\alpha}$.  We first establish some notation and vocabulary used in the proof.
For the above diagram to commute, it is necessary and sufficient that the map $\widehat{\alpha}$ be a ``fibrewise" map, in the sense that it satisfies
\begin{equation}\label{eq: fibrewise}
\widehat{\alpha}\big( S(i, 2k+1)\big) \subseteq S(\alpha(i), 2k+1) \subseteq S(Y, 2k+1)
\end{equation}
for each $i\in I_N$ and $S(i, 2k+1) \subseteq S(I_N, 2k+1)$.
If $i \in I_N \subseteq \Z$ is a typical point in the interval,  write $\overline{i} = (2k+1)i+k \in S(i, 2k+1)$.  Thus, $\overline{i}$ is the point in the centre of the length $2k$ subinterval $S( i, 2k+1) \subseteq S(I_N, 2k+1)$ and, in particular, we have  $\rho_{2k+1}(\overline{i}) = i$, with $\rho_{2k+1} \colon S(I_N, 2k+1) \to I_N$ the standard projection. Then the $2k+1$ points of each $S( i, 2k+1)$ may be described as
\begin{equation}\label{eq: S( i, 2k+1)}
S( i, 2k+1) = \{ \overline{i} + r \mid -k \leq r \leq k \}.
\end{equation}
To describe points of $S(Y, 2k+1)$, we use notation similar to that used above in the discussion of covering an inclusion.
For each point  $y \in Y$, write $y = (y_1, \ldots, y_n)$ if $Y \subseteq \Z^n$.
Then, write $\overline{y} = \big((2k+1)y_1+k, \ldots, (2k+1)y_n+k\big) \in S(y, 2k+1)$, so that  $\overline{y}$ is the point in the centre of $S(y, 2k+1)$, which is a cubical
$(2k+1) \times (2k+1) \times \cdots \times (2k+1)$ lattice in $\Z^n$.  Namely, $S(y, 2k+1)$ is the translate of  $(I_{2k})^n\subseteq \Z^n$ by $(2k+1)y$.  Here, the scalar multiple $(2k+1)y$ means coordinate-wise (vector) scalar multiplication, and we will use coordinate-wise (vector) scalar multiplication and addition in $\Z^n$ freely in our notation.
Note, in particular, that we have  $\rho_{2k+1}(\overline{y}) = y$, with $\rho_{2k+1} \colon S(Y, 2k+1) \to Y$ the standard projection. Then the $(2k+1)^n$ points of each $S( y, 2k+1)$ may be described as
\begin{equation}\label{eq: S( y, 2k+1)}
S( y, 2k+1) = \{ \overline{y} + (r_1, \ldots, r_n) \mid -k \leq r_1, \ldots, r_n \leq k \}.
\end{equation}

\begin{proof}[Proof of \thmref{thm: path odd subdivision map}]
We define our covering map of subdivisions
$$\widehat{\alpha}\colon S(I_N, 2k+1)  \to S(Y, 2k+1)$$
in such a way so that we have
\begin{equation}\label{eq: centre to centre}
\widehat{\alpha}( \overline{i} ) = \overline{ \alpha(i) }
\end{equation}
for each $i \in I_N$.  That is, we will map the centre of the subinterval $S(i, 2k+1)$ to the centre of the cubical lattice $S( \alpha(i), 2k+1)$, for each $i$.  Now the key point to realize here is that, for any pair of adjacent points $y \sim_{\Z^n} y'$, the centres of $S(y, 2k+1)$ and $S(y', 2k+1)$ are joined by a (straight) segment of length $2k+1$, consisting of $2k+2$ points---including the two centres themselves as endpoints of the segment.  Of these $2k+2$ points, $k+1$ of them, including $\overline{y}$, are contained in $S(y, 2k+1)$ and  $k+1$ of them, including $\overline{y'}$, are contained in $S(y', 2k+1)$.  To define $\widehat{\alpha}$, then, we simply ``join the dots" between the centres of the cubical lattices, using the points of $S(I_N, 2k+1)$  between the centres of the subintervals to map point-for-point to the segments joining the centres of the lattices in $S(Y, 2k+1)$.

To define this map in symbols, which we do in formulas \eqref{eq: ends} and \eqref{eq: middle} (see also \eqref{eq: alt middle}) below, we write $\alpha(i+1) - \alpha(i)$ for the ``displacement vector" in $\Z^n$ from $\alpha(i)$ to $\alpha(i+1)$, for each $i = 0, \ldots, N-1$.  Since $\alpha(i) \sim_{\Z^n} \alpha(i+1)$, each coordinate of $\alpha(i+1) - \alpha(i)$ is $0$, $1$, or $-1$.  For each $j \in S(I_N, 2k+1)$ with  $k \leq j \leq N(2k+1) + k-1$, we may write
\begin{equation}\label{eq: quotient and remainder}
j -k = (2k+1) q(j)  + r(j),
\end{equation}
for unique $q(j) \in I_N$ and $r(j)$ with $0 \leq r(j) \leq 2k$.  Indeed, if $j$ falls in a subinterval of form
\begin{equation}\label{eq: centre to edge}
q(2k+1) +k \leq j \leq q(2k+1) + k + k, \text{ i.e., }  \overline{q} \leq j \leq \overline{q} + k,
\end{equation}
for some $q$ with $0 \leq q \leq N-1$, then we have $q(j) = q = \rho_{2k+1}(j) =  \lfloor j/(2k+1) \rfloor$ and $r(j)$ is in the range $0 \leq r(j) \leq k$.  On the other hand,   if $j$ falls in a subinterval of form
\begin{equation}\label{eq: edge to centre}
q(2k+1) +k + k+1 \leq j \leq q(2k+1) + k + 2k, \text{ i.e., }  \overline{(q+1)} - k \leq j \leq \overline{(q+1)} -1,
\end{equation}
for some $q$ with $0 \leq q \leq N-1$, then we have $q(j) = q$ but here $\rho_{2k+1}(j) =  \lfloor j/(2k+1) \rfloor = q+1$ and $r(j)$ is in the range $k+1 \leq r(j) \leq 2k$.

Also,  write $\mathbf{k} = (k, \ldots, k) \in \Z^n$ for  the vector each of whose coordinates is $k$.  Then, for each $i = 0, \ldots, N-1$, the two centres $\overline{\alpha(i)}$ and
$\overline{\alpha(i+1)}$ in $S(Y, 2k+1)$ that correspond to the adjacent points $\alpha(i)$ and $\alpha(i+1)$ in $Y$ have coordinates that satisfy
\begin{equation}\label{eq: displacement}
\begin{aligned}
\overline{\alpha(i+1)} - \overline{\alpha(i)} &= (2k+1) \alpha(i+1) + \mathbf{k} - \big( (2k+1) \alpha(i+1) + \mathbf{k}  \big) \\
&= (2k+1) \big[  \alpha(i+1) -  \alpha(i)  \big].
\end{aligned}
\end{equation}
Thus, we may pass from $\overline{\alpha(i)}$ to $\overline{\alpha(i+1)}$ by successively adding the displacement vector $ \alpha(i+1) -  \alpha(i)$ to $\overline{\alpha(i)}$ a total of $(2k+1)$ times. This is  the segment of points in $S(Y, 2k+1)$ joining the neighbouring centres alluded to above.

Our formula for $\widehat{\alpha}$, then is given as follows:
For $0 \leq j \leq N(2k+1) + 2k$, with the above notation, we define $\widehat{\alpha}$ on the parts of $S(I_N, 2k+1)$ before the first centre $\overline{0}$ and beyond the last centre $\overline{N}$ as
\begin{equation}\label{eq: ends}
\widehat{\alpha}(j) = \begin{cases}  \ \  \overline{\alpha(0)} & 0 \leq  j < k \\
 \\
\overline{\alpha(N)} & N(2k+1) + k \leq  j \leq N(2k+1) + 2k, \\
 \end{cases}
 \end{equation}
and on the part of $S(I_N, 2k+1)$ that falls between (any) centres as

\begin{equation}\label{eq: middle}
\widehat{\alpha}(j) =  \overline{\alpha\big(q(j)\big)} + r(j)\big[   \alpha\big(q(j)+1\big) - \alpha\big(q(j)\big)   \big]   \quad \text{for} \quad k \leq j < N(2k+1) +k.
 \end{equation}

\bigskip

We may also write \eqref{eq: middle} as follows, in a way that perhaps emphasizes the interpolation between centres.  First, write the domain of definition of \eqref{eq: middle} as  the disjoint union
$$[k, N(2k+1) + k-1] = \coprod_{i =0}^{N-1}\ [\overline{i}, \overline{i} +2k],$$
where we have $\overline{i} = i(2k+1) + k$, whence $\overline{i+1} =  \overline{i} + 2k$.  Also, for each $i$, if we write $j \in [\overline{i}, \overline{i} +2k]$ as $j = \overline{i} + t$ for some $t$ with $0 \leq t \leq  2k$, then $j + k = i(2k+1) + t$ and so $q(j) = i$ and $r(j) = t$ from \eqref{eq: quotient and remainder}.  Then, for each $i \in I_N$, \eqref{eq: middle} may also be written:

\begin{equation}\label{eq: alt middle}
\widehat{\alpha}(\overline{i} + t) =  \overline{\alpha(i)} + t\big[   \alpha(i+1) - \alpha(i)   \big]   \quad \text{for} \quad 0 \leq t \leq 2k.
 \end{equation}

First observe that this definition does indeed satisfy the ``centre-to-centre" property \eqref{eq: centre to centre}.   For if $i = 0, N$, Formula \eqref{eq: ends} gives  $\widehat{\alpha}( \overline{i} ) = \overline{ \alpha(i) }$.  If $1 \leq i \leq N-1$, then \eqref{eq: quotient and remainder} (or \eqref{eq: centre to edge}) gives $q(\overline{i}) = i$ and $r(\overline{i}) = 0$, whence Formula \eqref{eq: middle} (or \eqref{eq: alt middle}) gives $\widehat{\alpha}( \overline{i} ) = \overline{ \alpha(i) }$.

Next we confirm that, with this definition, the desired diagram commutes.  For this, we confirm that $\widehat{\alpha}$ has the fibrewise property of \eqref{eq: fibrewise}.   Divide $S(I_N, 2k+1)$ into a (disjoint) union of subintervals of the form
$$S(I_N, 2k+1) = \coprod_{i=0}^N\ [\overline{i} - k, \overline{i}  -1] \sqcup [\overline{i}, \overline{i} +k],$$
with the first type of subinterval consisting of the $k$ points to the left of a centre $\overline{i}$ and the second type consisting of the $k+1$ points to the right (including the centre $\overline{i}$ itself).  Note that we have
\begin{equation}\label{eq: S(i, 2k+1) subdivided}
S(i, 2k+1) = [\overline{i} - k, \overline{i}  -1] \sqcup [\overline{i}, \overline{i} +k]
\end{equation}
for each $i = 0, \ldots, N$ (see \eqref{eq: S( i, 2k+1)} above).

For $j \in [\overline{i} - k, \overline{i}  -1]$ with $1 \leq i \leq N$, formula \eqref{eq: edge to centre} and  the expressions that follow it give $q(j) = i-1$ and $r(j)$
in the range $k+1 \leq r(j) \leq 2k$.  From
formula \eqref{eq: middle}   we have
$$
\begin{aligned}
\widehat{\alpha}(j) &=  \overline{\alpha\big(i-1\big)} + r(j)\big[   \alpha\big(i\big) - \alpha\big(i-1\big)   \big] \\
&=  \overline{\alpha\big(i\big)} + \big(r(j) -(2k+1)\big)\big[   \alpha\big(i\big) - \alpha\big(i-1\big)   \big]
\end{aligned}
$$
where the re-write in the second line follows from  \eqref{eq: displacement}.  Since we have $-k \leq r(j) -(2k+1) \leq -1$ and the displacement vector $\alpha(i) - \alpha(i-1)$
has coordinates from $\{ 0, \pm 1\}$, it follows from \eqref{eq: S( y, 2k+1)} that we have
\begin{equation}\label{eq: left side fibrewise}
\widehat{\alpha}([\overline{i} - k, \overline{i}  -1])  \subseteq S( \alpha(i), 2k+1)
\end{equation}
for $i = 1, \dots, N$.

Similarly, for $j \in [\overline{i}, \overline{i} +k]$ with $0 \leq i \leq N_1$,   we have $q(j) = i$ and $0 \leq r(j) \leq k$ (cf.~\eqref{eq: centre to edge} above).  Then
$$\widehat{\alpha}(j) =  \overline{\alpha(i)} + r(j) (\alpha(i+1) - \alpha(i)) \in S( \alpha(i), 2k+1),$$
from \eqref{eq: S( y, 2k+1)}, because  the displacement vector $\alpha(i+1) - \alpha(i)$ has coordinates from $\{ 0, \pm 1\}$.   Hence, we also have
\begin{equation}\label{eq: right side fibrewise}
\widehat{\alpha}([\overline{i}, \overline{i} +k])  \subseteq S( \alpha(i), 2k+1)
\end{equation}
for $i = 0, \dots, N-1$.

Finally, Formula \eqref{eq: ends} gives directly that $\widehat{\alpha}([\overline{0} - k, \overline{0}  -1])  = \overline{ \alpha(0)} \in  S( \alpha(0), 2k+1)$ and $\widehat{\alpha}([\overline{N}, \overline{N} +k])  = \overline{ \alpha(N)} \in  S( \alpha(N), 2k+1)$.  These items combined with \eqref{eq: left side fibrewise},  \eqref{eq: right side fibrewise}, and \eqref{eq: S(i, 2k+1) subdivided}
confirm that  \eqref{eq: fibrewise} is satisfied for each $i \in I_N$.

For continuity, since $\widehat{\alpha}$ is a path in $S(Y, 2k+1)$, we simply need to check that $\widehat{\alpha}(j) \sim_{S(Y, 2k+1)} \widehat{\alpha}(j+1)$ for each $j = 0, \ldots, N(2k+1) + 2k-1$.  To this end, write
$S(I_N, 2k+1)$ as
a (disjoint) union of subintervals of the form
$$S(I_N, 2k+1) = [0, k-1] \sqcup \coprod_{i=0}^{N-1}\ [\overline{i}, \overline{i} + 2k] \sqcup [\overline{N}, N(2k+1) + 2k].$$
On each of these subintervals separately, $\widehat{\alpha}$ is easily seen to be  continuous.  In fact,  $\widehat{\alpha}$ is constant on the first and last.
Using \eqref{eq: quotient and remainder}--\eqref{eq: edge to centre}, we may write each of the remaining intervals as
\begin{equation}\label{eq: subint q and r}
\begin{aligned}
{[ \overline{i}, \overline{i} + 2k ] } &= [\overline{i}, \overline{i} + k] \sqcup  [\overline{(i+1)} - k, \overline{(i+1)} - 1]\\
&= \{ j \in S(I_N, 2k+1) \mid q(i) = i, r(j) = 0, \ldots, 2k \}.
\end{aligned}
\end{equation}
On $[ \overline{i}, \overline{i} + 2k ]$, then,  Formula \eqref{eq: middle} gives us
$$\widehat{\alpha}(j) = \overline{\alpha(i)} + r(j)[   \alpha(i+1) - \alpha(i) ]$$
with $r(j) = 0, \ldots, 2k$ as we take $j$ successively from $\overline{i}$ to $\overline{i} + 2k$.
Now each displacement vector $\alpha(i+1) - \alpha(i)$ has coordinates taken from $\{ 0, \pm 1\}$, and so when we add this term to a point in $\Z^n$, as we are doing here in passing from $\widehat{\alpha}(j)$ to $\widehat{\alpha}(j+1)$, we adjust each coordinate by at most $1$ to yield an adjacent point in $\Z^n$.

The remaining issue, then, is whether these continuous segments match-up in a continuous way.  For the first pair, namely $[0, k-1]$ and $[\overline{0}, \overline{0}+2k]$, we have $\widehat{\alpha}(k-1) = \widehat{\alpha}(\overline{0}) = \overline{\alpha(0)}$, and so $\widehat{\alpha}$ certainly gives a continuous map on the union of these two subintervals.   For the remaining pairs of adjacent subintervals, we must check that $\widehat{\alpha}(\overline{i} + 2k)$ and $\widehat{\alpha}(\overline{(i+1)})$ are adjacent, for $i = 0, \ldots, N-1$.   Using \eqref{eq: subint q and r} and the displayed formula below it for reference, we have
$$
\begin{aligned}
\widehat{\alpha}(\overline{(i+1)}) - \widehat{\alpha}(\overline{i} + 2k) &= \overline{\alpha(i+1)} - \big(  \overline{\alpha(i)} + 2k[   \alpha(i+1) - \alpha(i) ] \big) \\
&= (2k+1) \big[  \alpha(i+1) -  \alpha(i)  \big] - 2k [   \alpha(i+1) - \alpha(i)] \\
&=  \alpha(i+1)  - \alpha(i),
\end{aligned}
$$
where we have used \eqref{eq: displacement} to arrive at the middle line.  Once again we use the fact that each $\alpha(i+1) - \alpha(i)$ has coordinates taken from $\{ 0, \pm 1\}$ to conclude that $\widehat{\alpha}(\overline{i} + 2k)$ and $\widehat{\alpha}(\overline{(i+1)})$ are adjacent, and so $\widehat{\alpha}$ does indeed assemble into a continuous function.
\end{proof}

\begin{example}
Return to part (a) of \exref{ex: no subdivision map} and consider the map $f\colon I_1 \to I_1$ given by $f(0) = 1$ and $f(1) = 0$.  Applying \thmref{thm: path odd subdivision map}, we obtain a map
$$\widehat{\alpha} \colon S(I_1, 3) = I_5 \to S(I_1, 3) = I_5$$
that covers $f$.  It is given by $\widehat{\alpha}(i) = 5-i$ for $1 \leq i \leq 4$, and $\widehat{\alpha}(0) = 4$, $\widehat{\alpha}(5) = 1$.
\end{example}

\begin{example}
Let $\alpha\colon I_N \to Y$ be a constant path in $Y \subseteq \Z^n$.  Suppose that we have $\alpha(i) = y_0 \in Y$ for $0 \leq i \leq N$.  For any odd $2k+1$, the map $\widehat{\alpha}\colon S(I_N, 2k+1) \to S(Y, 2k+1)$ given by \thmref{thm: path odd subdivision map} that covers $\alpha$ is simply the constant path $\widehat{\alpha}(j) = \overline{y_0} \in S(Y, 2k+1)$ for $0 \leq j \leq (2k+1)N + 2k$.  For instance, we would cover the constant map in \exref{ex: no subdn interval} with the constant map $\widehat{\alpha}\colon S(I_1, 3) = I_5 \to S(I_0, 3) = I_2$ with $\widehat{\alpha}(i) = 1 \in I_2$ for each $i \in I_5$.
\end{example}

We will refer to the cover $\widehat{\alpha}$ of a path $\alpha$ constructed in \thmref{thm: path odd subdivision map} as the \emph{standard cover} of the path. Ideally, we would like to construct a functorial cover of maps of digital images regardless of the dimension of the domain, but we are not able to do so at present.  We observe here, though, that the standard cover of a path does have some functorial-like properties, such as the following:

\begin{lemma}\label{lem: technical stuff on covers}
Let $Y \subseteq \Z^n$ be any digital image.  For any path $\alpha\colon I_N \to Y$, let $\widehat{\alpha}$ denote the standard cover with respect to $(2k+1)$-fold subdivisions, so that
$\widehat{\alpha}$ makes the following diagram commute:
$$\xymatrix{ S(I_N, 2k+1) \ar[d]_{\rho_{2k+1}} \ar[r]^-{\widehat{\alpha}} & S(Y, 2k+1) \ar[d]^{\rho_{2k+1}}\\
I_N \ar[r]_-{\alpha} & Y}$$
\begin{itemize}
\item[(a)] If $C_N \colon I_N \to Y$ denotes the constant path at a point $y_0 \in Y$, then we have $\widehat{C_N} = C_{N'}\colon I_{N'} \to S(Y, 2k+1)$, the constant path at $\overline{y_0}$, where $N' = (2k+1)N + 2k$.
\item[(b)] If $Y = I_N$ and $\alpha\colon I_N \to I_N$ is the identity, then we have
$$\widehat{ \mathrm{id}_{I_N}} = \mathrm{id}_{S(I_N, 2k+1)} \colon S(I_N, 2k+1) \to S(I_N, 2k+1).$$
\end{itemize}
\end{lemma}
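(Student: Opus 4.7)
The plan is to prove both parts by direct substitution into the explicit piecewise formulas \eqref{eq: ends} and \eqref{eq: alt middle} that define the standard cover $\widehat{\alpha}$ in the proof of \thmref{thm: path odd subdivision map}. In each case, the special form of $\alpha$ will cause these formulas to collapse to an easily-identified map, so the verification reduces to a bookkeeping check on each of the three pieces that make up the domain $S(I_N, 2k+1) = [0, k-1] \sqcup [k, N(2k+1)+k-1] \sqcup [N(2k+1)+k, N(2k+1)+2k]$.

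I would handle part (a) first. Setting $\alpha(i) = y_0$ for every $i \in I_N$ makes each centre $\overline{\alpha(i)}$ equal to $\overline{y_0}$ and each displacement vector $\alpha(i+1) - \alpha(i)$ equal to the zero vector $\mathbf{0} \in \Z^n$. Formula \eqref{eq: alt middle} then collapses on every middle subinterval $[\overline{i}, \overline{i} + 2k]$ to $\widehat{C_N}(\overline{i} + t) = \overline{y_0} + t\cdot \mathbf{0} = \overline{y_0}$, while formula \eqref{eq: ends} directly produces the same value $\overline{y_0}$ on the two padding segments. As these pieces exhaust $I_{N'}$, the cover $\widehat{C_N}$ is the constant map at $\overline{y_0}$, which is precisely $C_{N'}$.

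For part (b), I would substitute $\alpha(i) = i$ into the same formulas. This gives $\overline{\alpha(i)} = \overline{i}$ and each displacement $\alpha(i+1) - \alpha(i) = 1 \in \Z$, so formula \eqref{eq: alt middle} yields $\widehat{\mathrm{id}_{I_N}}(\overline{i} + t) = \overline{i} + t$. Hence $\widehat{\mathrm{id}_{I_N}}$ restricts to the identity on the middle range $[\overline{0}, \overline{N}]$. A separate check of \eqref{eq: ends} on the two end-segments completes the identification of $\widehat{\mathrm{id}_{I_N}}$ with $\mathrm{id}_{S(I_N, 2k+1)}$.

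No serious obstacle is anticipated: the whole argument is mechanical substitution, and the only care required is matching the piecewise-defined formula for $\widehat{\alpha}$ correctly across the boundaries of its three defining regions. Because both special paths ($C_N$ and $\mathrm{id}_{I_N}$) are compatible with the constant-extension and linear-interpolation pattern built into the construction of $\widehat{\alpha}$, the pieces fit together in each case to give exactly the claimed map.
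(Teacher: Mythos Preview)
Your handling of part (a) is correct and matches the paper's approach exactly: the paper's proof is simply ``both parts follow from a careful reading of the definition of $\widehat{\alpha}$,'' and you have carried out precisely that reading.

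For part (b), however, there is a genuine gap. You correctly verify that on the middle range $[\overline{0},\overline{N}]$ formula \eqref{eq: alt middle} gives $\widehat{\mathrm{id}_{I_N}}(\overline{i}+t)=\overline{i}+t$, so the cover is the identity there. But you then write ``A separate check of \eqref{eq: ends} on the two end-segments completes the identification,'' and this check in fact \emph{fails}. On the left segment $0\le j\le k-1$, formula \eqref{eq: ends} gives $\widehat{\mathrm{id}_{I_N}}(j)=\overline{\alpha(0)}=\overline{0}=k$, a constant, whereas the identity would give $j$. Likewise on the right segment $\overline{N}+1\le j\le N(2k+1)+2k$, formula \eqref{eq: ends} gives the constant value $\overline{N}=N(2k+1)+k$, not $j$. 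For a concrete instance, take $N=1$ and $2k+1=3$: the construction yields the map $I_5\to I_5$ sending $(0,1,2,3,4,5)$ to $(1,1,2,3,4,4)$, which is not the identity.

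So the ``separate check'' you defer is exactly the place where the argument breaks down; the issue is not with your method but with the statement of (b) as written, which does not match the explicit construction on the two end padding intervals. The standard cover of the identity agrees with the identity only on $[\overline{0},\overline{N}]$ and is constant (at the nearest centre) outside that range.
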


\begin{proof}
Both parts follow from a careful reading of the definition of $\widehat{\alpha}$.
\end{proof}

\begin{remark}\label{rem: even subdivisions 1D}
The conclusion of \thmref{thm: path odd subdivision map}  holds also for even subdivisions.  However the proof of this, whilst following essentially the same strategy as that of \thmref{thm: path odd subdivision map}, involves an adaptation to the fact that we have no ``middle points" in an even subdivision.  To avoid  giving another lengthy argument, much of which would be repetitive of the one just given, we settle instead for the weaker result below, which  is sufficient for our purposes here.

Still, we briefly indicate the way in which the proof of \thmref{thm: path odd subdivision map} may be adapted.  Recall that by an \emph{$n$-clique} in a digital image, we mean a set of $n$ points, each pair of which is adjacent.  For even subdivisions of $Y \subseteq \Z^n$, each cubical lattice $S(y, 2k)$ has a \emph{central $2^n$-clique} in place of the centre $\overline{y} \in  S(y, 2k+1)$.  For $i \in I_N$ an interval, $S(i, 2k)$ has a central $2$-clique, or middle pair.    To construct a covering map $F\colon S(I_N, 2k) \to S(Y, 2k)$,  we begin by mapping central $2$-cliques to central $2^n$-cliques (a choice is involved, which is determined by the ``displacement vectors" used in the proof of \thmref{thm: path odd subdivision map}), and then stringing these together using the remaining points of $S(I_N, 2k)$.    If we imagine our central $2^n$-cliques as ``lights" at the centre of each cubical lattice, then the covering paths here are akin to a string of (higher-dimensional) fairy lights,  with each light joined by a straight segment of wire.
\end{remark}

In the following, the conclusion for the case in which $k$ is odd is actually weaker than that of \thmref{thm: path odd subdivision map}.  We include it here so as to have a statement of the fact that a covering map exists independently of the parity of $k$.

\begin{corollary}\label{cor: path odd or even subdivision map}
Suppose we are given $\alpha\colon I_N \to Y$, a path of length $N$ in any digital image $Y \subseteq \Z^n$.  For any  $k \geq 2$,  there is  a map of subdivisions
$$F\colon S(I_N, k+1) = I_{N(k+1)+k}  \to S(Y, k)$$
that covers the given path, in the sense that the following diagram commutes:
$$\xymatrix{ S(I_N, k+1) \ar[d]_{\rho_{k+1}} \ar[r]^-{F} & S(Y, k) \ar[d]^{\rho_{k}}\\
I_N \ar[r]_-{\alpha} & Y}$$
\end{corollary}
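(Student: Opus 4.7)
The plan is to combine the covering construction of \thmref{thm: path odd subdivision map}, which is only available for odd subdivisions, with the partial projection $\rho^c_{k+1}$ of \defref{def: rho^c}, exploiting the factorization $\rho_{k+1} = \rho_{k} \circ \rho^c_{k+1}$ from \corref{cor: factor rho}. The argument will split into two cases according to the parity of $k$, with the partial projection inserted on one side or the other of the standard cover accordingly.

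If $k$ is even, then $k+1 \geq 3$ is odd, so I first apply \thmref{thm: path odd subdivision map} to obtain the standard cover $\widehat{\alpha}\colon S(I_N, k+1) \to S(Y, k+1)$, and then set $F := \rho^c_{k+1}\circ \widehat{\alpha}$. This $F$ is continuous as a composition of continuous maps (using \propref{prop: rho^c conts}), and it satisfies
$$\rho_k \circ F \;=\; \rho_k \circ \rho^c_{k+1} \circ \widehat{\alpha} \;=\; \rho_{k+1}\circ \widehat{\alpha} \;=\; \alpha \circ \rho_{k+1},$$
using \corref{cor: factor rho} for the middle equality and the defining covering property of $\widehat{\alpha}$ for the last.

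If instead $k$ is odd, then $k \geq 3$ is already a valid odd subdivision, so \thmref{thm: path odd subdivision map} yields $\widehat{\alpha}\colon S(I_N, k) \to S(Y, k)$. In this case I put the partial projection on the other side, defining $F := \widehat{\alpha}\circ \rho^c_{k+1}\colon S(I_N, k+1) \to S(I_N, k) \to S(Y, k)$. The symmetric check
$$\rho_k \circ F \;=\; \rho_k \circ \widehat{\alpha} \circ \rho^c_{k+1} \;=\; \alpha \circ \rho_k \circ \rho^c_{k+1} \;=\; \alpha \circ \rho_{k+1}$$
again uses \corref{cor: factor rho}, and continuity of $F$ is immediate from \thmref{thm: path odd subdivision map} and \propref{prop: rho^c conts}.

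I do not anticipate a genuine technical obstacle here, since all the substantive work has been done in constructing $\widehat{\alpha}$ and in verifying continuity of the partial projections. The only real bookkeeping concern is matching subdivision levels: in the even case, $\rho^c_{k+1}$ drops the codomain from level $k{+}1$ to level $k$ after $\widehat{\alpha}$, while in the odd case it drops the domain from $S(I_N, k{+}1)$ to $S(I_N, k)$ before $\widehat{\alpha}$ is applied. Which side the partial projection belongs on is forced by the parity of $k$ through the requirement that \thmref{thm: path odd subdivision map} be applied with an odd subdivision parameter.
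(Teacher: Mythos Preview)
Your proposal is correct and follows essentially the same approach as the paper: split on the parity of $k$, apply \thmref{thm: path odd subdivision map} at the odd level, and compose with the partial projection $\rho^c_{k+1}$ on whichever side is needed, invoking \corref{cor: factor rho} for the commutativity. Your version is slightly more explicit in writing out the verification that $\rho_k \circ F = \alpha \circ \rho_{k+1}$, but the argument is otherwise identical.
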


\begin{proof}
Suppose that $k$ is even.  Pre-compose $\rho_{k}\colon S(Y, k) \to Y$ with the ``partial projection" $\rho^c_{k+1}\colon S(Y, k+1)  \to S(Y, k)$ of \defref{def: rho^c}. Then, as in  \corref{cor: factor rho}, we have $\rho_{k+1} = \rho_{k} \circ \rho^c_{k+1}\colon S(Y, k+1) \to Y$ and  \thmref{thm: path odd subdivision map} provides a filler for the diagram
$$\xymatrix{ S(I_N, k+1) \ar[d]_{\rho_{k+1}} \ar@{.>}[r]^-{\widehat{\alpha}} & S(Y, k+1) \ar[d]^{\rho_{k} \circ \rho^c_{k+1} }\\
I_N \ar[r]_-{\alpha} & Y.}$$
But then $F = \rho^c_{k+1}\circ \widehat{\alpha}\colon S(I_N, 2k+1) \to S(Y, k)$ provides the desired covering of $\alpha$.

Similarly, if  $k$ is odd, then use $F =\widehat{\alpha}\circ  \rho^c_{k+1}\colon S(I_N, k+1) \to S(Y, k)$.
\end{proof}

We end this section with a companion result about subdivision of \emph{loops} in a digital image.

\begin{definition}\label{def: loop}
A \emph{loop of length $N$} in a digital image $Y$ is a path $\gamma\colon I_N \to Y$ that satisfies $\gamma(0) = \gamma(N) \in Y$.
\end{definition}

\begin{corollary}\label{cor: loop subdivision map}
Suppose we are given $\gamma\colon I_N \to Y$, a loop of length $N$ in any digital image $Y \subseteq \Z^n$.  Suppose that we have $\gamma(0) = \gamma(N) = y_0 \in Y$. For any $k \geq 2$,  there is  a map of subdivisions
$$\Gamma\colon S(I_N, k') = I_{Nk'+k'-1}  \to S(Y, k),$$
with $k' \in \{ k, k+1\}$, that covers the given loop, in the sense that the following diagram commutes:
$$\xymatrix{ S(I_N, k') \ar[d]_{\rho_{k'}} \ar[r]^-{\Gamma} & S(Y, k) \ar[d]^{\rho_{k}}\\
I_N \ar[r]_-{\gamma} & Y}$$
Furthermore,  $\Gamma\colon S(I_N, k') \to S(Y, k)$ is a loop,  of length  $Nk'+k'-1$ in  $S(Y, k)$, and we may take $\Gamma$ to be a loop based at any point of
$S( y_0, k)$.
\end{corollary}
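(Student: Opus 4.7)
The plan is to combine \corref{cor: path odd or even subdivision map} with a local modification of the cover inside the fibre over $y_0$ so as to realize an arbitrary basepoint $p \in S(y_0, k)$. When $k$ is odd, I would take $k' = k$ and let $\Gamma_0 = \widehat{\gamma}\colon S(I_N, k) \to S(Y, k)$ be the standard cover from \thmref{thm: path odd subdivision map}. When $k$ is even, I would take $k' = k+1$ and let $\Gamma_0 = \rho^c_{k+1} \circ \widehat{\gamma}$, as in the proof of \corref{cor: path odd or even subdivision map}, where $\widehat{\gamma}\colon S(I_N, k+1) \to S(Y, k+1)$ is the standard cover and $\rho^c_{k+1}$ is the partial projection from \defref{def: rho^c}.

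Next I would observe that $\Gamma_0$ is already a loop. Formula \eqref{eq: ends} in the proof of \thmref{thm: path odd subdivision map} shows that the standard cover $\widehat{\gamma}$ sends both $0$ and the last index $Nk'+k'-1$ of the domain to $\overline{\gamma(0)}$ and $\overline{\gamma(N)}$ respectively, which both equal $\overline{y_0}$ since $\gamma$ is a loop at $y_0$. In the even-$k$ case, post-composition with $\rho^c_{k+1}$ still sends both endpoints to a single common point of $S(y_0, k)$.

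Finally, to re-base the loop at an arbitrary $p \in S(y_0, k)$, I would modify $\Gamma_0$ near each end of its domain. In the odd case, combining formula \eqref{eq: ends} with the centre-to-centre property \eqref{eq: centre to centre} shows that $\widehat{\gamma}$ is constantly $\overline{y_0}$ on the first $\lfloor k'/2 \rfloor + 1$ indices (all lying in $S(0, k')$) and on the last $\lfloor k'/2 \rfloor + 1$ indices (all lying in $S(N, k')$). Since the $\ell^\infty$-distance from any point of the cubical lattice $S(y_0, k')$ to its centre $\overline{y_0}$ is at most $\lfloor k'/2 \rfloor$, there is an edge-path of length exactly $\lfloor k'/2 \rfloor$ from $p$ to $\overline{y_0}$ staying inside $S(y_0, k')$, padded with constant steps when the actual distance is strictly smaller. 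I would replace the constant prefix of $\widehat{\gamma}$ by such a walk and the constant suffix by its reverse; the result is continuous, still covers $\gamma$ (the walk stays in $S(y_0, k')$, so the fibrewise condition \eqref{eq: fibrewise} is preserved), and is now based at $p$. For the even case, I would first lift $p$ to some $p' \in S(y_0, k+1)$ using the surjectivity of $\rho^c_{k+1}$ on the fibre over $y_0$, then apply the same modification to $\widehat{\gamma}$ and post-compose with $\rho^c_{k+1}$. The main obstacle is the bookkeeping: the diameter bound and the number of constant slots provided by \eqref{eq: ends} match exactly, so one must carefully confirm both that the walk fits and that continuity is preserved at the interface with the unchanged middle portion of $\Gamma_0$.
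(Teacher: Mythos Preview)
Your proposal is correct and follows essentially the same approach as the paper's own proof: take the standard cover (post-composed with $\rho^c_{k+1}$ in the even case), observe it is already a loop, and then exploit the constant portions at either end---the ``loose ends'' in the paper's language---to walk to an arbitrary basepoint of $S(y_0,k)$. Your version supplies the bookkeeping the paper leaves implicit (the match between the $\lfloor k'/2\rfloor$ constant slots and the $\ell^\infty$-radius of $S(y_0,k')$, and the lifting of $p$ through $\rho^c_{k+1}$ in the even case), but the strategy is the same.
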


\begin{proof}
A review of the definitions of the covering paths in \thmref{thm: path odd subdivision map}  and \label{cor: path even subdivision map} reveals that  the standard cover of the loop $\gamma$ is a loop based at $\overline{y_0} \in  S( y_0, k)$ if $k$ is odd, or at $\rho^c_{k+1}(\overline{y_0})$, where $\overline{y_0} \in  S( y_0, k+1)$, if $k$ is even.  (Note that $k'$ is odd, whether $k$ is odd or even.)  In both results, the covering paths started and ended with a constant portion, of ``duration" equal to one-half the width of the appropriate cubical lattice.    For any $k$, rather than keep these ends constant, we treat them as ``loose ends," which then may be used so as to complete the loop at a different basepoint of $S( y_0, k)$ if desired.
\end{proof}

\section{Two-Dimensional Domains: Surfaces in $Y$}\label{sec: 2D}

We begin with a particular version of our main result.  We consider the case in which the domain is a rectangle $I_M \times I_N$.  In this case, we can give a rather clean and direct argument that generalizes the results of the previous section in a very satisfactory way.  Also, this case leads to a useful corollary about covers of homotopies (\corref{cor: path homotopy covering}), which we use in \cite{LOS19c}.  In the following proof, we rely heavily on the notation established for \thmref{thm: path odd subdivision map}.

\begin{theorem}\label{thm: 2-D subdivision map rectangle}
Suppose we are given a map $H \colon I_M \times I_N \to Y$ with $Y \subseteq \Z^n$ any  digital image.  For any $k \geq 1$,  there is  a canonical choice of map $\widehat{H}\colon S(I_M, 2k+1) \times S(I_N, 2k+1) \to S(Y, 2k+1)$ that makes the following diagram commute:
$$\xymatrix{ S(I_M, 2k+1) \times S(I_N, 2k+1) \ar[d]_{\rho_{2k+1} \times \rho_{2k+1} =\rho_{2k+1}} \ar[r]^-{\widehat{H}} & S(Y, 2k+1) \ar[d]^{\rho_{2k+1}}\\
I_M \times I_N \ar[r]_-{H} & Y}$$
Furthermore, if we define $\alpha(s) =  H(s, 0)$ and $\widetilde{\alpha}(t) = \widehat{H}(t, 0)$, then $\widetilde{\alpha} = \widehat{\alpha}\colon S(I_M, 2k+1) \to S(Y, 2k+1)$, the standard cover as in \thmref{thm: path odd subdivision map}  of the path $\alpha\colon I_M \to Y$.  Likewise along the other three edges of the rectangle $I_M \times I_N$.
\end{theorem}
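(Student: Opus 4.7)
The approach is to generalize the ``centres-to-centres'' strategy from \thmref{thm: path odd subdivision map} to two dimensions. The first step is to set $\widehat{H}(\overline{i}, \overline{j}) = \overline{H(i,j)}$ for each $(i, j) \in I_M \times I_N$. This choice automatically enforces the edge-compatibility asserted in the final clause of the theorem, because the 1D standard cover is itself determined by where it sends the centres $\overline{i}$. Next, I would decompose $S(I_M, 2k+1) \times S(I_N, 2k+1)$ into width-$k$ boundary strips along each of the four outer edges (where $\widehat{H}$ is extended by constancy, in imitation of \eqref{eq: ends}), together with rectangular \emph{fundamental cells} $[\overline{i}, \overline{i+1}] \times [\overline{j}, \overline{j+1}]$ of size $(2k+1) \times (2k+1)$ for $0 \leq i \leq M-1$ and $0 \leq j \leq N-1$. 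On each fundamental cell, parametrise points as $(\overline{i} + \sigma, \overline{j} + \tau)$ with $0 \leq \sigma, \tau \leq 2k+1$, and define $\widehat{H}$ by an explicit piecewise-linear formula built from displacement vectors among the four corner values $H(i,j)$, $H(i+1,j)$, $H(i,j+1)$, $H(i+1,j+1)$; the formula switches among displacements according to which of the four sub-quadrants (determined by $\sigma \lessgtr k$ and $\tau \lessgtr k$) the point falls in, so that on each sub-quadrant we are essentially doing the 1D interpolation anchored at the corresponding corner centre.

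The verification then has three parts. First, the fibrewise condition \eqref{eq: fibrewise}: the four sub-quadrants of each fundamental cell are precisely the fibres of $\rho_{2k+1}$ landing in the four neighbour lattice points, so one checks that the formula on each sub-quadrant takes values in the appropriate $S(H(\cdot, \cdot), 2k+1)$. Second, continuity: adjacencies within a sub-quadrant follow from the 1D argument, because the relevant displacement vectors have coordinates in $\{0, \pm 1\}$ and adding such a vector to a lattice point gives an adjacent point, exactly as in the proof of \thmref{thm: path odd subdivision map}. Adjacencies across sub-quadrant boundaries within a single fundamental cell, and across adjacent fundamental cells, then require explicit verification. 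Third, edge compatibility: the restriction of $\widehat{H}$ to a slice such as $\tau = 0$ must reproduce the 1D formula \eqref{eq: alt middle} exactly, which follows by matching the sub-quadrant formulas along the edge with the 1D construction.

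The main obstacle is arranging the piecewise formula so that continuity holds across sub-quadrant boundaries, particularly for diagonal adjacencies---steps changing both $\sigma$ and $\tau$ by $\pm 1$---where the compound displacement involves differences of two separate displacement vectors. The crucial enabling fact is that $H(i,j)$, $H(i+1,j)$, $H(i,j+1)$, $H(i+1,j+1)$ are pairwise adjacent in $Y \subseteq \Z^n$, which forces their coordinates to lie in a two-element set $\{m, m+1\}$ in each coordinate. This gives the same sort of leverage as the ``coordinates in $\{0, \pm 1\}$'' step of the 1D proof, and enables a case analysis analogous to (though more involved than) the one concluding the proof of \thmref{thm: path odd subdivision map}. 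I would expect the most delicate cases to be those where the four corner values do not vary monotonically (for instance, $H(i,j) = H(i+1,j+1) \neq H(i+1,j) = H(i,j+1)$): here the choice of displacement vectors used on each sub-quadrant must be calibrated so that the composite displacement incurred by crossing a sub-quadrant boundary still has all coordinates in $\{0, \pm 1\}$.
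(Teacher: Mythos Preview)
Your outline matches the paper's architecture closely: centres-to-centres, boundary strips handled by constancy in the perpendicular direction (the paper's Step~1), and a decomposition into the cells $[\overline{i},\overline{i+1}]\times[\overline{j},\overline{j+1}]$ (the paper's Step~3). You also correctly isolate the hard case and the enabling fact that the four corner values lie in a unit $n$-cube. Where your proposal falls short is exactly at the point you flag as the ``main obstacle'': you do not supply the formula on the cell interior, and the natural reading of ``piecewise-linear in displacement vectors on each sub-quadrant'' does not work as stated. Concretely, if on the quadrant anchored at $(i,j)$ one sets
\[
\widehat{H}(\overline{i}+\sigma,\overline{j}+\tau)=\overline{H(i,j)}+\sigma\,[H(i{+}1,j)-H(i,j)]+\tau\,[H(i,j{+}1)-H(i,j)],
\]
then in the non-monotone configuration $H(i,j)=H(i{+}1,j{+}1)\neq H(i{+}1,j)=H(i,j{+}1)$ the point $(\sigma,\tau)=(k,k)$ is sent to $\overline{H(i,j)}+2k\,[H(i{+}1,j)-H(i,j)]$, which lies outside $S\big(H(i,j),2k+1\big)$; the fibrewise condition already fails before one reaches continuity. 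So ``calibrating the displacement vectors'' is not a matter of bookkeeping---a genuinely different interpolation is needed.

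The paper resolves this with a separate key lemma (\lemref{lem: unit square covering}) whose mechanism you do not anticipate. After first laying down the 1D standard covers along \emph{all} horizontal and vertical lines through centres (the paper's Step~2, which your outline omits), the interior of each cell is filled by further subdividing each quadrant into two \emph{triangles} along its diagonal, and then working \emph{coordinate-by-coordinate} with a ``coordinate-centring'' function $C$: in the $i$th coordinate, on the triangle $t\le s$ one moves toward the centre at rate $s$ if $f_i$ changes along the horizontal edge and at rate $t$ otherwise, with the roles reversed on the triangle $s\le t$. This amounts to using $\max(s,t)$ or $\min(s,t)$, not $s+t$, and the choice depends on the $i$th coordinate of $f$ rather than on a single vectorial displacement. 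That device is what keeps every value inside the correct orthant and makes the cross-triangle and cross-quadrant adjacency checks go through. Your proposal has the right scaffolding but is missing this idea; without it (or an equivalent replacement) the argument does not close.
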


\begin{proof}
For each $t$ with $0\leq t \leq N$, define $\alpha_t\colon I_M \to Y$, and for each $s$ with $0\leq s \leq M$, define $\beta_s\colon I_N \to Y$, as
$$\alpha_t(s) = H(s, t), \text{ for } 0 \leq s \leq M \quad \text{and} \quad \beta_s(t) = H(s, t), \text{ for } 0 \leq t \leq N.$$
So the $\alpha_t$ are the horizontal coordinate curves of $H$, and  the $\beta_s$ are the vertical.  Then as in \thmref{thm: path odd subdivision map}, each of these paths has a standard cover
$$\widehat{\alpha_t}\colon S(I_M, 2k+1) = I_{(2k+1)M + 2k} \to S(Y, 2k+1)$$
and
$$\widehat{\beta_s}\colon S(I_N, 2k+1) = I_{(2k+1)N + 2k} \to S(Y, 2k+1).$$
We will define $\widehat{H}$ in such a way as to have these be amongst the horizontal and vertical coordinate curves of $\widehat{H}$, respectively.

Recall from our generalities on subdivision in \secref{sec: subdivision} that we have an isomorphism of digital images $S(I_M \times I_N, 2k+1)  \cong S(I_M, 2k+1) \times S(I_N, 2k+1)$.  For individual points $(i, j) \in  I_M \times I_N$, we may specialize this identification to an isomorphism  $S\big((i, j), 2k+1\big)  \cong S(i, 2k+1) \times S(j, 2k+1)$.  We use these identifications repeatedly in what follows.

Recall also from \thmref{thm: path odd subdivision map} that, for $i \in I_M$, we write the centre of the subinterval $S(i, 2k+1) \subseteq S(I_M, 2k+1)$ as $\overline{i} = i(2k+1) + k$.  Then each $(2k+1)\times(2k+1)$ sub-lattice $S\big( (i, j), 2k+1\big) \subseteq S( I_M\times I_N, 2k+1)$ has the point
$$\overline{(i, j)} = (\overline{i}, \overline{j}) = (i(2k+1) + k, j(2k+1) + k)$$
at its centre.  We refer to these points as \emph{centres} of the sub-divided digital image $S( I_M\times I_N, 2k+1)$.  Furthermore, for a point $y \in Y$, we write $\overline{y}$ for the centre of $S(y, 2k+1) \subseteq S(Y, 2k+1)$.

We may begin by defining  $\widehat{H}$ on these centres as
\begin{equation}\label{eq: H cover on centres}
\widehat{H}\big(\overline{(i, j)}\big)  = \widehat{H}(\overline{i}, \overline{j})  := \overline{H(i, j)},
\end{equation}
for each $(i, j) \in I_M \times I_N$.  We will extend this definition of $\widehat{H}$ over the whole of $S( I_M\times I_N, 2k+1)$ in several steps.

\subsection{Step 1: Outside the centres}
For  $s < \overline{0} =  k$ or $t < \overline{0} =  k$, or $s > \overline{M} = (2k+1)M + k$ or $t > \overline{N} = (2k+1)N + k$ define
$$
\begin{aligned}
\widehat{H}(s, t) &= \widehat{\beta_0}(t)  \text{ for } 0 \leq s \leq k-1 \text{ and } 0 \leq t \leq (2k+1)N + 2k  \\
\widehat{H}(s, t) &= \widehat{\alpha_0}(s)  \text{ for } 0 \leq s \leq (2k+1)M + 2k  \text{ and } 0 \leq t \leq k-1   \\
\widehat{H}(s, t) &= \widehat{\beta_M}(t)  \text{ for } \overline{M}+1 \leq s \leq (2k+1)M + 2k \text{ and } 0 \leq t \leq (2k+1)N + 2k  \\
\widehat{H}(s, t) &= \widehat{\alpha_N}(s)  \text{ for } 0 \leq s \leq (2k+1)M + 2k \text{ and }  \overline{N}+1 \leq t \leq (2k+1)N + 2k.
\end{aligned}
$$
The situation is illustrated in \figref{fig:Th.5.1 Step 1}.  Dots represent the points on which $\widehat{H}$ has been defined at this point. Solid dots represent centres, on which we have defined $\widehat{H}$ as in \eqref{eq: H cover on centres}.  Open dots are those points on which we have defined $\widehat{H}$ at this step.   We have also included some gridlines (dotted) in the figure.  These gridlines do not pass through points (they are not gridlines of the integer lattice).  Rather, they pass between points, and serve to aggregate points into $(2k+1) \times (2k+1)$ squares in $S(I_M \times I_N, 2k+1)$, of the form
$$S\big( (i, j), 2k+1\big) = [(2k+1)i, (2k+1)i +2k] \times [(2k+1)j, (2k+1)j +2k],$$
for $(i, j) \in I_M \times I_N$.  Each of these squares contains one center, namely $\overline{(i, j)} \in S\big( (i, j), 2k+1\big)$.  All points in one of these squares are mapped to one point of $I_M \times I_N$ by the standard projection; we have $\rho_{2k+1}\big( S\big( (i, j), 2k+1\big) \big) = (i, j) \in I_M \times I_N$.
\begin{figure}[h!]
\centering
\includegraphics[trim=120 440 120 130,clip,width=\textwidth]{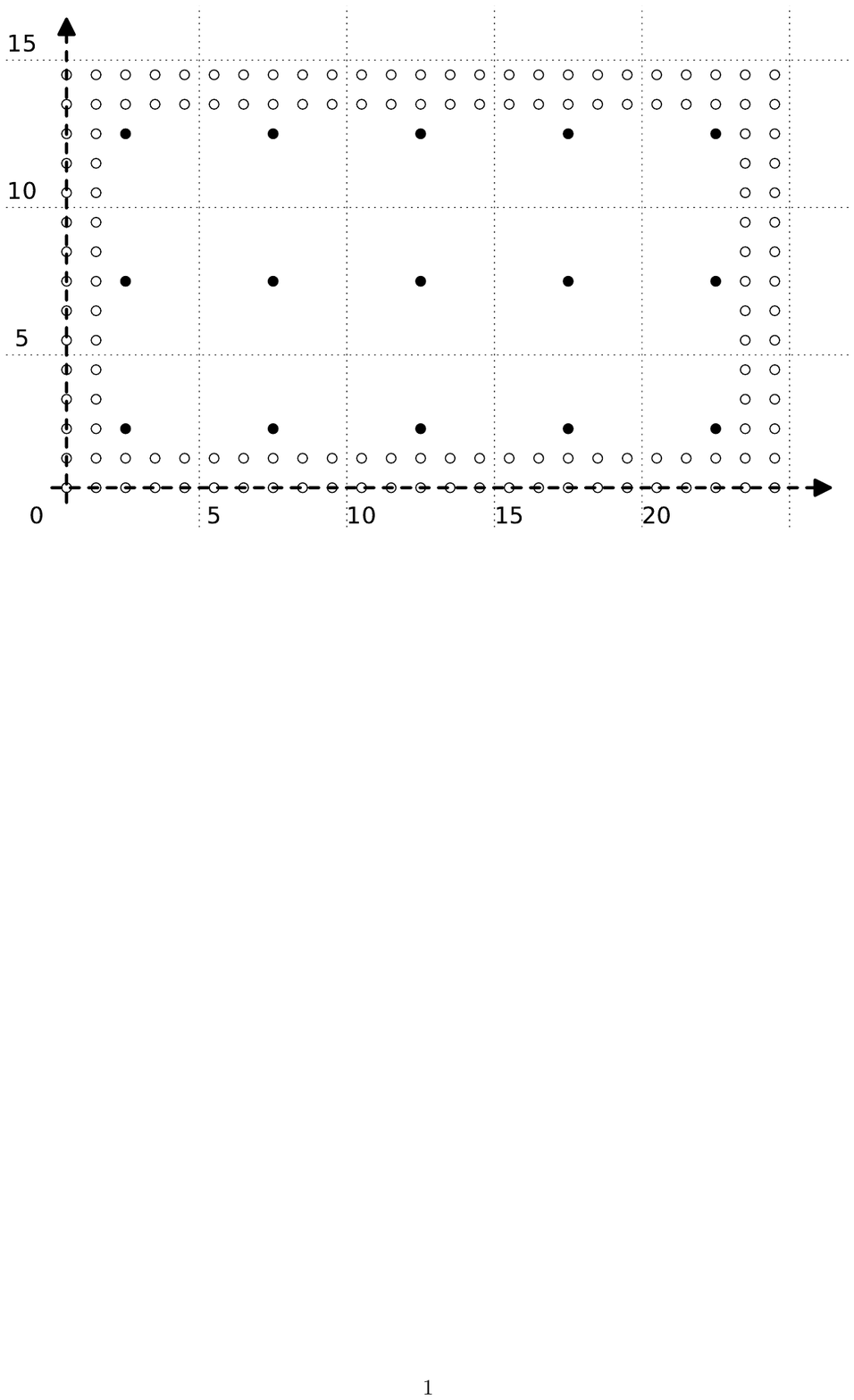}
\caption{\label{fig:Th.5.1 Step 1} $\widehat{H}$ after Step 1.  Illustrated with $M = 4$, $N=2$, and $2k+1 = 5$.}
\end{figure}

Notice that where definitions from this step overlap with each other, namely in each of the four corner regions, the definitions agree.  For example, if $0 \leq s, t \leq k-1$, we have  $\widehat{H}(s, t) = \widehat{\beta_0}(t)$ and $\widehat{H}(s, t) = \widehat{\alpha_0}(s)$.  Now, for $0 \leq t \leq k-1$, \thmref{thm: path odd subdivision map} gives $\widehat{\beta_0}(t) = \overline{ \beta_0(0)} = \overline{ H(0,0)}$, and similarly we have  $\widehat{\alpha_0}(s) = \overline{ H(0,0)}$  for $0 \leq s \leq k-1$.  The other four corner regions behave similarly.

We will check continuity after the next step.

\subsection{Step 2: Coordinate curves through the centres}  Next we extend the definition of $\widehat{H}$ to  the horizontals and verticals through each centre of $S(I_M \times I_N, 2k+1)$.  On these, we define for each $i \in I_M$ and $j \in I_N$,
$$
\begin{aligned}
\widehat{H}(\overline{i}, t) &= \widehat{\beta_i}(t)  \text{ for }  0 \leq t \leq (2k+1)N + 2k  \\
\widehat{H}(s, \overline{j}) &= \widehat{\alpha_j}(s)  \text{ for } 0 \leq s \leq (2k+1)M + 2k.
\end{aligned}
$$
The situation is illustrated in \figref{fig:Th.5.1 Step 2}.  Again,  dots represent the points on which $\widehat{H}$ has now been defined.  Solid dots represent centres; open dots represent points on which the definition of  $\widehat{H}$ has been extended in Steps 1 and 2.
\begin{figure}[h!]
\centering
\includegraphics[trim=120 440 120 130,clip,width=\textwidth]{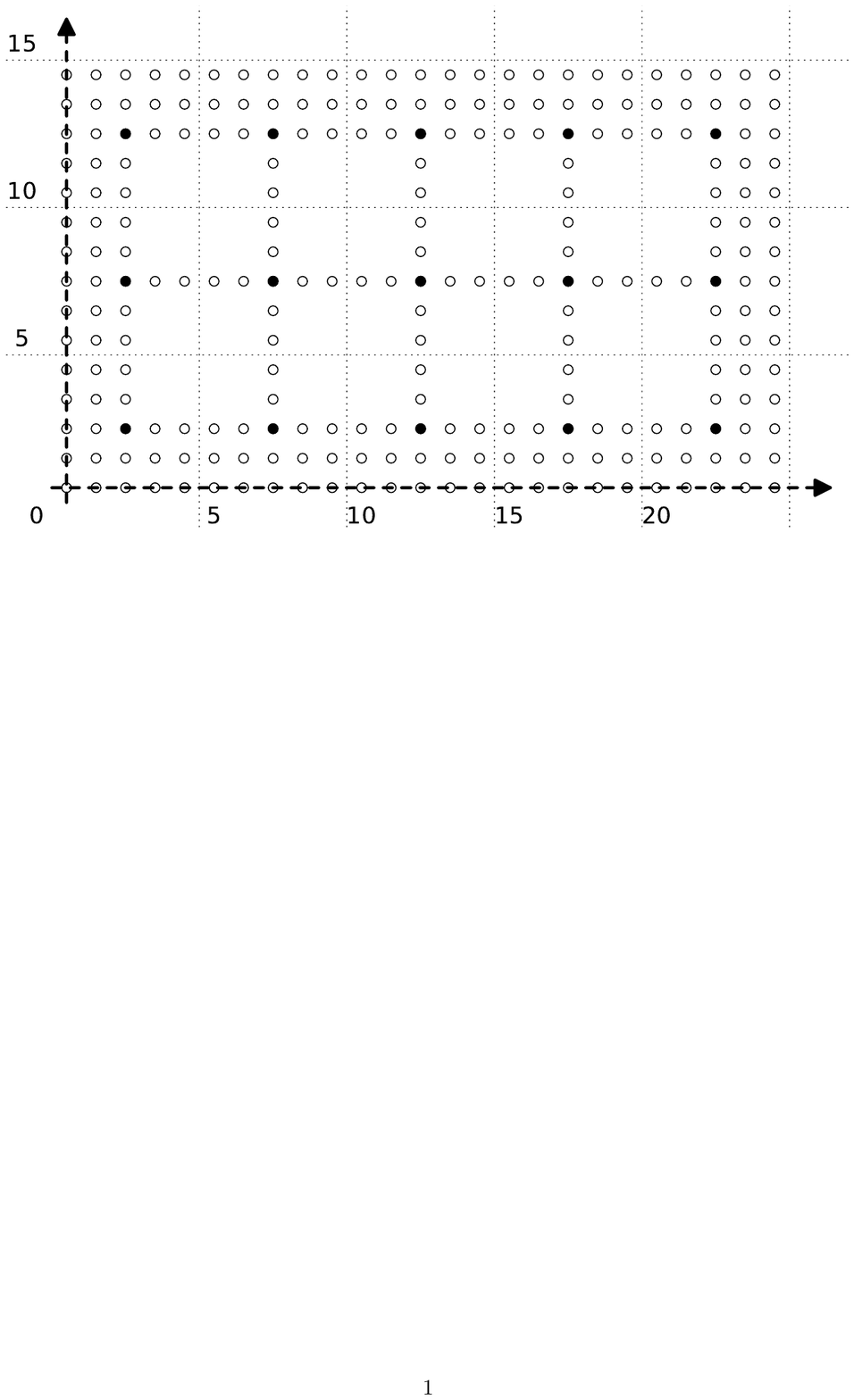}
\caption{\label{fig:Th.5.1 Step 2} $\widehat{H}$ after Step 2.  Illustrated with $M = 4$, $N=2$, and $2k+1 = 5$.}
\end{figure}
We check $\widehat{H}$ is well-defined. In any horizontal row or vertical column that includes centres, this Step 2 includes a definition of $\widehat{H}$ at those centres.  Notice  that  the way in which we defined the standard cover  of a path in \thmref{thm: path odd subdivision map} extended the ``centre-to-centre" definition of \eqref{eq: centre to centre}, so the value assigned to $\widehat{H}$ on any centre at this step is consistent with the value assigned by \eqref{eq: H cover on centres}.  The only other overlap in definition is  at the top or bottom of a vertical, or the left and right ends of a horizontal.  For example, if $0 \leq t \leq k-1$, we have $\widehat{H}(\overline{i}, t) = \widehat{\beta_i}(t)$ from this step, and
$\widehat{H}(\overline{i}, t) = \widehat{\alpha_0}(\overline{i})$ from Step 1.  Now  $\widehat{\beta_i}(t) = \overline{\beta_i(0)}$, since $t \leq k-1$, so we have $\widehat{\beta_i}(t) = H(i, 0)$.  But $\widehat{\alpha_0}(\overline{i}) = H(i, 0)$, and the definitions agree.  The other overlaps around the edges are seen to agree similarly;  $\widehat{H}$ is well-defined thus far.

Now we  check continuity, so far as we have defined $\widehat{H}$.  To this end, suppose we have adjacent points $(s, t)$ and $(s', t')$ in that part of $S( I_M\times I_N, 2k+1)$ on which we have defined $\widehat{H}$.  If both points are in one of the horizontal bands $0 \leq t, t' \leq \overline{0}=k$ or $(2k+1)N + k = \overline{N}  \leq t, t' \leq (2k+1)N + 2k$, or if both points are in one of the horizontal rows through centres $t = t' = \overline{j}$ for some $j$ with $j \in I_N$,    then adjacency of $\widehat{H}(s, t)$ and $\widehat{H}(s', t')$ in $S(Y, 2k+1)$ follows immediately from the continuity of the standard covers $\widehat{\alpha_j}$.  This is because, on these horizontal regions, we have defined $\widehat{H}(s, t) = \widehat{\alpha_j}(s)$, for a suitable $j$ depending on $t$.  Hence, for  $(s, t) \sim (s', t')$, we have $s \sim s'$ in $S(I_M, 2k+1)$, whence $\widehat{\alpha_j}(s) \sim \widehat{\alpha_j}(s')$  and therefore  $\widehat{H}(s, t) \sim \widehat{H}(s', t')$.
For both points in one the
vertical bands $0 \leq s, s' \leq \overline{0}=k$ or $(2k+1)M + k = \overline{M}  \leq s, s' \leq (2k+1)M + 2k$, or both points  in one of the vertical columns through centres $s = s' = \overline{i}$ for some  $i \in I_M$,  adjacency of $\widehat{H}(s, t)$ and $\widehat{H}(s', t')$ in $S(Y, 2k+1)$ follows immediately from the continuity of the standard covers $\widehat{\beta_i}$, in a similar way.

It remains to consider the cases in which  one point lies in a horizontal row or band, the other point lies in a vertical row or band, and they are  situated ``across a corner from each other" so that both do not lie in a horizontal or a vertical.  This entails that one point is on a horizontal and one on a vertical, each adjacent, but not equal, to a centre $(\overline{i}, \overline{j})$ (see \figref{fig:Th.5.1 Step 2}).  For example, consider a pair $(\overline{i}+1, \overline{j})$ and $(\overline{i}, \overline{j}+1)$.  Here, we have
$$
\begin{aligned}
\widehat{H}(\overline{i}+1, \overline{j}) &=   \widehat{\alpha_j}( \overline{i}+ 1) =  \overline{ \alpha_j(i)} + 1\cdot[\alpha_j(i+1) - \alpha_j(i)] \\
&= \overline{ H(i, j)}+ 1\cdot[H(i+1, j) - H(i, j)]
\end{aligned}
$$
and
$$
\begin{aligned}
\widehat{H}(\overline{i}, \overline{j}+1) &=   \widehat{\beta_i}( \overline{j}+ 1) =  \overline{ \beta_i(j)} + 1\cdot[\beta_i(j+1) - \beta_i(j)] \\
&= \overline{ H(i, j)}+ 1\cdot[H(i, j+1) - H(i, j)].
\end{aligned}
$$
The difference between these two, using vector arithmetic in $S(Y, 2k+1)$, is
$$\widehat{H}(\overline{i}+1, \overline{j}) - \widehat{H}(\overline{i}, \overline{j}+1) = H(i+1, j) - H(i, j+1) \in \Z^n.$$
Since $H$ is continuous, and $(i+1, j) \sim (i, j+1)$ in $I_M \times I_N$, each coordinate of this difference is in $\{0, \pm1\}$, and it follows that we have
$$\widehat{H}(\overline{i}+1, \overline{j}) \sim_{S(Y, 2k+1)} \widehat{H}(\overline{i}, \overline{j}+1).$$
Similarly, consider the pair of points  $( \overline{M}-1, k) \sim ( \overline{M}, k+1) \in S( I_M\times I_N, 2k+1)$    (towards the lower-right corner in \figref{fig:Th.5.1 Step 2}).  Now  $( \overline{M}-1, k) =  ( \overline{M-1}+2k, k)$, so we have (cf.~formula \eqref{eq: alt middle} from \thmref{thm: path odd subdivision map})
$$
\begin{aligned}
\widehat{H}( \overline{M}-1, k) &=   \widehat{\alpha_0}( \overline{M-1}+ 2k) =  \overline{ \alpha_0(M-1)} + 2k[\alpha_0(M) - \alpha_0(M-1)] \\
&= \overline{ H(M-1, 0)}+ 2k[H(M, 0) - H(M-1, 0)]
\end{aligned}
$$
and, since we have $k = \overline{0}$ (refer again to  \eqref{eq: alt middle})
$$
\begin{aligned}
\widehat{H}( \overline{M}, k+1) &=   \widehat{\beta_M}( k+1) =  \widehat{\beta_M}( \overline{0}+1) = \overline{ \beta_M(0)}+ [\beta_M(1) - \beta_M(0)] \\
&= \overline{ H(M, 0)} + [H(M, 1) - H(M, 0)].
\end{aligned}
$$
Using vector arithmetic in $S(Y, 2k+1)$,
we may write  $\overline{ H(M-1, 0)} = (2k+1)H(M-1, 0) + (k, k)$ and $\overline{ H(M, 0)} = (2k+1)H(M, 0) + (k, k)$.   The difference between  $\widehat{H}( \overline{M}-1, k)$ and $\widehat{H}( \overline{M}, k+1)$, then, is
$$\widehat{H}( \overline{M}, k+1) - \widehat{H}( \overline{M}-1, k)  =  H(M, 1)- H(M-1, 0).$$
Now $(M, 1) \sim_{I_M\times I_N} (M-1, 0)$, and hence $H(M, 1)\sim_Y H(M-1, 0)$ from the continuity of $H$.  It follows that each coordinate of $H(M, 1)- H(M-1, 0) \in \Z^n$, and hence each coordinate of  $\widehat{H}( \overline{M}, k+1) - \widehat{H}( \overline{M}-1, k)$, belongs to $\{ 0, \pm1\}$.  That is, we have
$$\widehat{H}( \overline{M}, k+1) \sim_{S(Y, 2k+1)} \widehat{H}( \overline{M}-1, k).$$
Other cases are checked similarly; we leave the details as an exercise.  It follows that $\widehat{H}$ is continuous, so far as we have defined it.

\subsection{Step 3: Extension over squares whose corners are centres} The last step requires some ideas beyond those of \thmref{thm: path odd subdivision map}.  But, first, note that we may extend $\widehat{H}$ over the interior of any square in $S(I_M\times I_N, 2k+1)$ whose corners are centres independently of any other such square.  This is because any two points of $S(I_M\times I_N, 2k+1)$ that are adjacent must be in one such square (including its edges) or, if not, then both must be in the region of  $S(I_M\times I_N, 2k+1)$ from Part 2, where we have already confirmed continuity.  So it is sufficient to show that we may extend $\widehat{H}$ over a typical such square
$$[\overline{i}, \overline{i+1}] \times [\overline{j}, \overline{j+1}]$$
with corners
$$\{ (\overline{i}, \overline{j}), (\overline{i+1}, \overline{j}), (\overline{i}, \overline{j+1}), (\overline{i+1}, \overline{j+1}) \}$$
for some $(i, j) \in I_M\times I_N$.  Such a square is illustrated in \figref{fig:Th.5.1 Step 3}. As in the two previous figures, dots indicate points on which we have already defined $\widehat{H}$.  Notice we have preserved portions of the gridlines discussed when we described the features of \figref{fig:Th.5.1 Step 1} above.  These gridlines now divide each square  $[\overline{i}, \overline{i+1}] \times [\overline{j}, \overline{j+1}]$ into four quadrants.  Each quadrant contains a centre of $S( I_M \times I_N, 2k+1)$ (at its corner) and all points in one quadrant are mapped to one point of the $4$-clique $[i, i+1]\times [j, j+1] \subseteq I_M \times I_N$ by the standard projection. It follows that, if we are to cover $H$, the image under $\widehat{H}$ of all points in one of these quadrants must lie in some $S(y, 2k+1) \subseteq S(Y, 2k+1)$ for a single point $y \in Y$.  For example, The lower-left quadrant of  $[\overline{i}, \overline{i+1}] \times [\overline{j}, \overline{j+1}]$ consists of the points $[\overline{i}, \overline{i} +k] \times [\overline{j}, \overline{j}+k]$ and we require the extended $\widehat{H}$ to satisfy
$$\widehat{H} \big( [\overline{i}, \overline{i} +k] \times [\overline{j}, \overline{j}+k]\big) \subseteq S\big( H(i, j), 2k+1\big) \subseteq S(Y, 2k+1).$$
In the statement of  \lemref{lem: unit square covering} below, this ``quadrant-wise" behaviour of an extension to a cover is addressed explicitly.  Furthermore, as we progress with the proof of \lemref{lem: unit square covering}, we will depend heavily on having the square divided into quadrants in this way.

In the previous steps, we have already defined $\widehat{H}$ on the edges and corners of this square.
\begin{figure}[h!]
\centering
\includegraphics[trim=280 490 120 130,clip,width=0.5\textwidth]{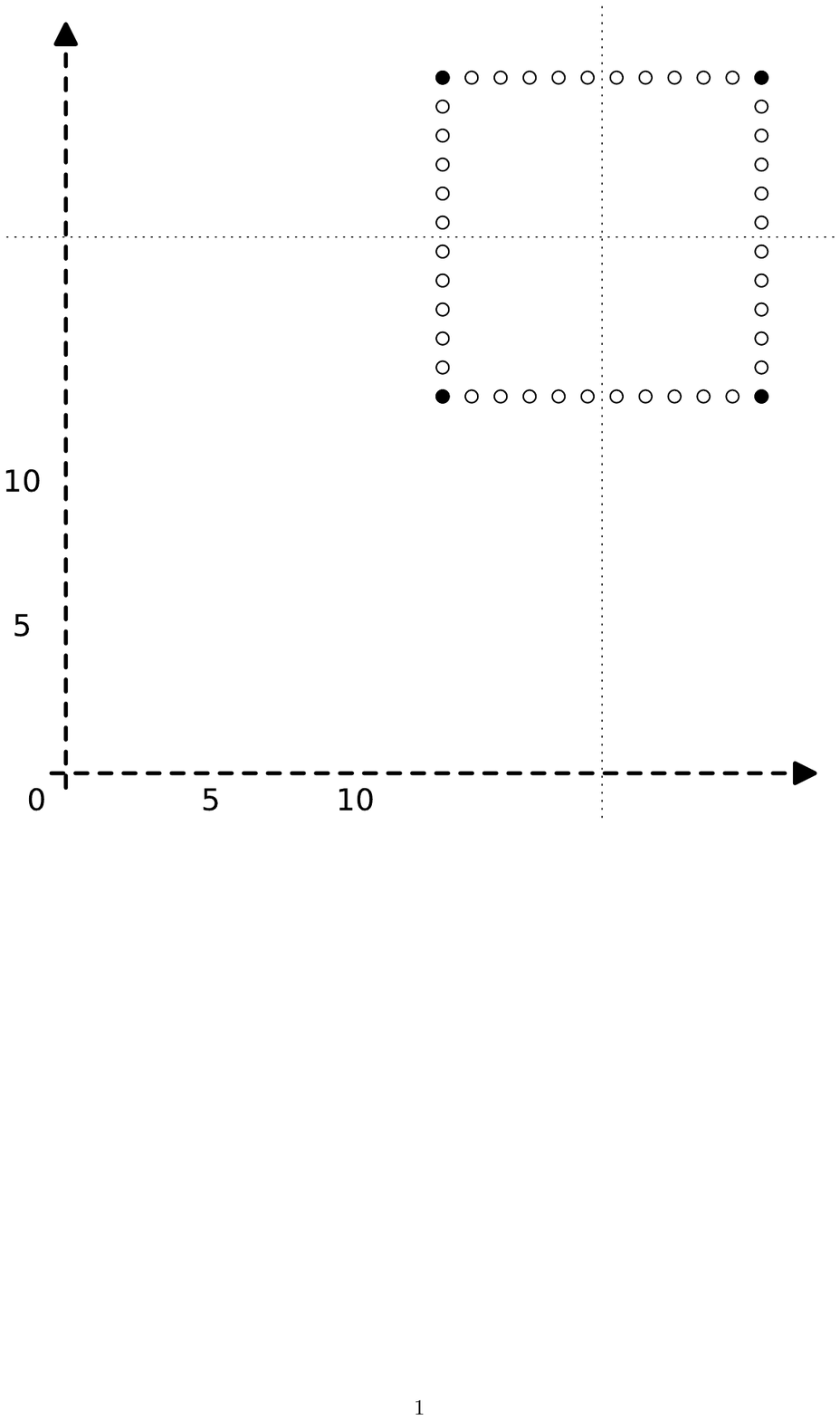}
\caption{\label{fig:Th.5.1 Step 3} A typical square over which we extend $\widehat{H}$ in Step 3.  Illustrated with $2k+1 = 11$.}
\end{figure}
We will apply \lemref{lem: unit square covering} below to extend over the interior of this square.  To do so, use the given $H$ to determine a unit $n$-cube as follows.  On each point of
$[i, i+1]\times [j, j+1]$ write $H$ coordinate-wise as
$$H(i, j) = \big( H_1(i, j), \ldots, H_n(i, j)\big),$$
and so-on for the other points. Then, for each coordinate $r = 1, \ldots, n$, set
$$a_r = \mathrm{min} \{ H_r(i, j), H_r(i+1, j) , H_r(i, j+1), H_r(i+1, j+1) \},$$
and let $A$ denote the unit $n$-cube
$$A = [a_1, a_1+1] \times \cdots \times [a_n, a_n+1] \subseteq\Z^n.$$
Then write
$$\overline{A} = [\overline{a_1}, \overline{a_1 + 1}] \times [\overline{a_2}, \overline{a_2 + 1}] \times \cdots \times [\overline{a_n}, \overline{a_n + 1}],$$
so that $\rho_{2k+1}\colon  \overline{A} \to A$ maps each orthant of $\overline{A}$ to the corresponding corner of $A$.  If, as in  \lemref{lem: unit square covering}, we write
$$\partial\big( [\overline{i}, \overline{i+1}] \times [\overline{j}, \overline{j+1}]\big)$$
for the boundary of the square $[\overline{i}, \overline{i+1}] \times [\overline{j}, \overline{j+1}]$, then from Steps 1 and 2 we have
$$\widehat{H}\colon  \partial\big( [\overline{i}, \overline{i+1}] \times [\overline{j}, \overline{j+1}]\big) \to \overline{A},$$
with each corner of $[\overline{i}, \overline{i+1}] \times [\overline{j}, \overline{j+1}]$  mapped by $\overline{H}$ to some corner of $\overline{A}$, and each edge of $[\overline{i}, \overline{i+1}] \times [\overline{j}, \overline{j+1}]$  mapped to the corresponding edges or diagonals of $\overline{A}$.  Notice this ``corner-to-corner" assertion follows from our choice of the coordinates for the distinguished ``minimal" corner $(a_1, \ldots, a_n)$ of $A$: Because $a_r$ is the minimum of $\{ H_r(i, j), H_r(i+1, j) , H_r(i, j+1), H_r(i+1, j+1) \}$, and because $H$ is continuous, it follows that we have
$$a_r \leq   H_r(i, j), H_r(i+1, j) , H_r(i, j+1), H_r(i+1, j+1) \leq a_r+1$$
for each point of the $4$-clique $[i, i+1] \times [j, j+1]$.
 Notice also that some of the corners of the $n$-cube $A$, respectively $\overline{A}$, may lie outside $Y$, respectively $S(Y, 2k+1)$.  The image of our square
$[i, i+1]\times[j, j+1]$ under $H$, however, does lie in $Y$ and it will follow that  the image of
$[\overline{i}, \overline{i+1}] \times [\overline{j}, \overline{j+1}]$
 under the extended $\widehat{H}$ likewise will be contained in $S(Y, 2k+1)$.

Now define translations in $\Z^2$ by
$$\overline{T_1} (\overline{i}, \overline{j}) = (\overline{0}, \overline{0}) \quad \text{and} \quad  T_1 (i, j) = (0, 0),$$
and translations in $\Z^n$ by
$$\overline{T_2} (\overline{a_1}, \ldots,  \overline{a_n}) = (\overline{0}, \ldots, \overline{0}) \quad \text{and} \quad  T_2 (a_1, \ldots, a_n) = (0, \ldots, 0).$$
Translation $\overline{T_1}$ preserves the boundary of the square; both pairs of translations respect standard projections, in that we have
$$\rho_{2k+1}\circ \overline{T_1} = T_1 \circ \rho_{2k+1}\colon [\overline{i}, \overline{i+1}] \times [\overline{j}, \overline{j+1}] \to [0, 1]^2$$
and
$$\rho_{2k+1}\circ \overline{T_2} = T_2 \circ \rho_{2k+1}\colon \overline{A} \to [0, 1]^n.$$
Apply \lemref{lem: unit square covering} to the map
$$F:=  \overline{T_2}\circ   \widehat{H} \circ (\overline{T_1})^{-1}\colon \partial\big( [\overline{0}, \overline{1}]^2 \big) \to [\overline{0}, \overline{1}]^n$$
with  the map $f \colon [0, 1]^2 \to [0, 1]^n$ defined by either  $f(p, q) = f\big(\rho_{2k+1}(\overline{p}, \overline{q})\big) = \rho_{2k+1}\big( F(\overline{p}, \overline{q})\big)$ or $f(p, q) = T_2\circ H \circ (T_1)^{-1}(p, q)$, since these agree on  $[0, 1]^2$.  The result is an extension of $F$ to $[\overline{0}, \overline{1}]^2$ which we may use to extend $\widehat{H}$ from the boundary of  $[\overline{i}, \overline{i+1}] \times [\overline{j}, \overline{j+1}]$ to the map
$$\widehat{H}:=    (\overline{T_2})^{-1}\circ  F \circ \overline{T_1} \colon [\overline{i}, \overline{i+1}] \times [\overline{j}, \overline{j+1}] \to \overline{A}.$$
This extension fits into the following diagram, in which all parts commute and in which we may reverse the directions of the translations and preserve commutativity:
$$\xymatrix{
 [\overline{i}, \overline{i+1}] \times [\overline{j}, \overline{j+1}] \ar[rrr]^-{\widehat{H}} \ar[rd]_-{\rho_{2k+1}} \ar[ddd]_{\overline{T_1}}& & &  \overline{A} \ar[ddd]^{\overline{T_2}} \ar[ld]^{\rho_{2k+1}}\\
 & [i, i+1]\times[j, j+1] \ar[r]^-{H} \ar[d]_{T_1} & A \ar[d]^{T_2}\\
 & [0,1]^2 \ar[r]_-{f} & [0,1]^n \\
 [\overline{0}, \overline{1}]^2 \ar[rrr]_-{F} \ar[ru]^{\rho_{2k+1}}& & & [\overline{0}, \overline{1}]^n  \ar[lu]^{\rho_{2k+1}}
}$$
The fact that the top trapezoid commutes means that, although $\overline{A}$ may contain points outside $S(Y, 2k+1)$, nonetheless the image of the extended $\widehat{H}$ must be contained in $S(Y, 2k+1)$, since the image of the original $H$ is contained in $Y$. As we remarked previously, it is sufficient to be able to extend $\widehat{H}$ over each square such as $ [\overline{i}, \overline{i+1}] \times [\overline{j}, \overline{j+1}] $ one at a time to complete the proof.
\end{proof}

It remains to prove the special case used at the heart of Step 3 of the above proof.  With reference to a $(2k+1)$-fold subdivision of either $\Z^2$ or $\Z^n$, Write the boundary of the square $[\overline{0}, \overline{1}] \times [\overline{0}, \overline{1}] = [k, 3k+1]\times [k, 3k+1]  \subseteq \Z^2$ as
$$\partial\big([\overline{0}, \overline{1}] \times [\overline{0}, \overline{1}]\big) = \{  \overline{0},  \overline{1}\} \times [\overline{0}, \overline{1}]
\cup [\overline{0}, \overline{1}]\times \{ \overline{0}, \overline{1}\},$$
and suppose that we have a map
$$F\colon \partial\big([\overline{0}, \overline{1}] \times [\overline{0}, \overline{1}]\big)  \to ([\overline{0}, \overline{1}] )^n
 \subseteq \Z^n$$
with the following two properties:

\subsection{Property (1)}\label{subs: Prop 1} $F$ preserves corners.  Namely,  we have
$$F\big(  \{ (\overline{0}, \overline{0}), (\overline{1}, \overline{0}), (\overline{0}, \overline{1}), (\overline{1}, \overline{1})\} \big) \subseteq \big\{ (\overline{i_1}, \ldots, \overline{i_1}) \mid \{ i_1, \ldots, i_n \} \subseteq \{0, 1\} \big\}.$$
Any map $F$ that possesses this property allows us to define a map $f\colon [0, 1]^2 \to [0, 1]^n$ as $f(i, j) = \rho_{2k+1}\circ F(\overline{i}, \overline{j})$ for $(i, j) \in [0, 1]\times [0,1]$, and then view $F$ as an extension over the boundary of $[\overline{0}, \overline{1}]^2$ of a cover of $f$.

\subsection{Property (2)}\label{subs: Prop 2} $F$ also interpolates edges to edges or diagonals.  That is, suppose $v, v' \in [0, 1]^2$ are either horizontal or vertical neighbours (not diagonal neighbours), so that  $\overline{v}, \overline{v'} \in [\overline{0}, \overline{1}]^2$ are two corners  at either end of a horizontal or vertical  edge of $[\overline{0}, \overline{1}]^2$.  \emph{Per} Property  (1), $F(\overline{v}), F(\overline{v'})$ are corners of $([\overline{0}, \overline{1}] )^n$  and $f(v), f(v')$ the corresponding corners of the unit square $[0, 1]^n \subseteq \Z^n$, where $f(v) = \rho_{2k+1}\big( F(\overline{v})  \big)$ and $f(v') = \rho_{2k+1}\big( F(\overline{v'}) \big)$ (notice that these may no longer be at either end of an edge, though). Parametrize the edge from $\overline{v}$ to $\overline{v'}$ as
$$ \{ \overline{v} + t [ v' - v] \} \text{ for } t = 0, \ldots, 2k+1.$$
Then  along each edge of $[\overline{0}, \overline{1}]^2$, Property (2) requires that we have
\begin{equation}\label{eq: F on edge}
F\big(  \overline{v} + t [v' - v]  \big) =
F(\overline{v}) + t [ f(v') - f(v)] \text{ for } t = 0, \ldots, 2k+1
\end{equation}
where, once again,  $f$ is defined as $f(v) = \rho_{2k+1}\circ F(\overline{v})$ for $v \in [0, 1]^2$.
Property (2), together with the fact that $F$ is mapping into a cube, entails that the given $F$ must be continuous.  It is easy to recognize the situation of Step 3 of the above proof here.  We have a commutative diagram
$$\xymatrix{ \partial\big( [\overline{0}, \overline{1}]\times [\overline{0},\overline{1}] \big) \ar[r]^-{F} \ar[d]_{\rho_{2k+1}}
 & ([\overline{0}, \overline{1}] )^n  \ar[d]^{\rho_{2k+1}} \\
[0, 1]\times [0,1] \ar[r]_-{f} &   ([0, 1])^n.}$$
Along the edges of
$[\overline{0}, \overline{1}]\times [\overline{0},\overline{1}]$, the map $F$ agrees with the standard covers of the unit-length paths in  $([0, 1])^n$ given by restricting $f$ to the unit-length edges of the unit square. (Note that $[\overline{0}, \overline{1}]\times [\overline{0},\overline{1}]$ is a sub-square of $S([0, 1]^2, 2k+1)$, however.)

\begin{lemma}\label{lem: unit square covering}
With the above notation, a map
$$F\colon \partial\big( [\overline{0}, \overline{1}]\times [\overline{0},\overline{1}] \big) \to [\overline{0}, \overline{1}]^n \subseteq \Z^n$$
that satisfies Properties (1) and (2), of (\ref{subs: Prop 1}) and (\ref{subs: Prop 2}), may be extended in a canonical way to a continuous map
$F\colon [\overline{0}, \overline{1}]^2 \to [\overline{0}, \overline{1}]^n$
that makes the following diagram commute:
$$\xymatrix{  [\overline{0}, \overline{1}]\times [\overline{0},\overline{1}]  \ar[r]^-{F} \ar[d]_{\rho_{2k+1}}
 & [\overline{0}, \overline{1}]^n  \ar[d]^{\rho_{2k+1}} \\
[0, 1]\times [0,1] \ar[r]_-{f} &   [0, 1]^n.}$$
In particular, the image of each of the four quadrants of $[\overline{0}, \overline{1}]^2$ under $F$ is contained in one of the $4$ (not-necessarily distinct) orthants
$$\big\{ S\big( f(0, 0), 2k+1\big), S\big( f(1, 0), 2k+1\big), S\big( f(0, 1), 2k+1\big), S\big( f(1, 1), 2k+1\big)  \big\}$$
of $[\overline{0}, \overline{1}]^n$.
\end{lemma}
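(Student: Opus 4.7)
The plan is to reduce the extension problem to a coordinate-by-coordinate question on the target and then handle the resulting boundary patterns for a 1D target by explicit formulas on each of the four quadrants.  The codomain $[\overline{0}, \overline{1}]^n$ is a product of intervals $[\overline{0}, \overline{1}] = [k, 3k+1]$, each naturally divided into the halves $[k, 2k]$ and $[2k+1, 3k+1]$ corresponding to $\overline{0}$ and $\overline{1}$.  Both the conclusion of the lemma (an orthant of $[\overline{0}, \overline{1}]^n$ is a product of halves, and the quadrant--orthant assignment factors through each target coordinate) and the hypotheses, Properties (1) and (2), factor over the coordinate directions of the target.  Hence it suffices to extend each coordinate map $F_r \colon \partial\big([\overline{0}, \overline{1}]^2\big) \to [\overline{0}, \overline{1}]$ continuously so that each quadrant of $[\overline{0}, \overline{1}]^2$ maps into the half of $[\overline{0}, \overline{1}]$ corresponding to the value $f_r(i, j) \in \{0, 1\}$ at the associated corner.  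Assembling $F = (F_1, \ldots, F_n)$ and noting that $\rho_{2k+1}$ collapses the two halves of $[\overline{0}, \overline{1}]$ to $0$ and $1$, respectively, then delivers the commutative diagram and the quadrant--orthant image condition.

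Fixing a coordinate $r$, the boundary map $F_r$ is determined by the four corner values $a = f_r(0, 0)$, $b = f_r(1, 0)$, $c = f_r(0, 1)$, $d = f_r(1, 1)$ in $\{0, 1\}$, via the standard 1D covers along the four edges (Property (2)).  Up to the dihedral symmetry of the square and the symmetry $0 \leftrightarrow 1$ on the target, the $16$ possible patterns reduce to four cases, which I plan to handle by explicit formulas.  For each, I then verify that the formula (a) extends the prescribed boundary data, (b) is $1$-Lipschitz in each of $x$ and $y$ so that adjacent inputs yield outputs differing by at most $1$, (c) takes values in $[k, 3k+1]$, and (d) sends each quadrant into the correct half.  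Sample formulas: a constant when $a = b = c = d$; $F_r(x, y) = y$ when $a = b \neq c = d$ (and $F_r(x, y) = x$ symmetrically); and $F_r(x, y) = \max(k, \min(x, y))$ when exactly one corner, say $(1, 1)$, differs from the others, with dihedral variants for the other ``single outlier'' configurations.

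The main obstacle is the diagonal pattern $a = d \neq b = c$, where the boundary values zigzag between the two halves of $[\overline{0}, \overline{1}]$ and no globally uniform formula respects both continuity and the quadrant constraint.  Here I plan to define $F_r$ piecewise on the four quadrants: in the representative instance $a = d = 0$ and $b = c = 1$, take $F_r(\overline{0} + s, \overline{0} + t) = k + \max(s, t)$ on the lower-left quadrant ($0 \leq s, t \leq k$) and $F_r(x, y) = 4k + 1 - \min(x, y)$ on the upper-right quadrant, both landing in the half $[k, 2k]$, together with $F_r(\overline{0} + s, \overline{0} + t) = \max(2k+1, \, k + s - t)$ on the lower-right quadrant and $\max(2k+1, \, k + t - s)$ on the upper-left quadrant, both landing in $[2k+1, 3k+1]$.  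The remaining step is the verification that these pieces agree with the boundary values, are $1$-Lipschitz on each quadrant, and produce $F_r$-values differing by at most $1$ across each of the three seams where two adjacent quadrants meet, which is a routine computation but accounts for the bulk of the technical work.  Assembling the coordinate-wise extensions yields the canonical $F$ promised by the lemma.
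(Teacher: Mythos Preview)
Your reduction to a coordinate-by-coordinate problem on the target, and your classification of the sixteen boundary patterns into four dihedral orbits, is sound and is essentially how the paper organizes things as well (the paper's unified formula \eqref{eq: coordinate-wise interpolant} is really a compact encoding of the same case split).  The constant, edge, and single-outlier formulas you give are fine and in fact agree with the paper's extension on those orbits.

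The gap is in the diagonal orbit $a=d\neq b=c$.  Your proposed formula on the lower-right quadrant, $F_r(\overline{0}+s,\overline{0}+t)=\max(2k+1,\,k+s-t)$, is $1$-Lipschitz in $x$ and in $y$ separately, but that is \emph{not} the continuity condition in force here: with the adjacency used throughout the paper, diagonally adjacent inputs $(x,y)\sim(x+1,y-1)$ must also have outputs within $1$.  The expression $k+s-t$ jumps by $2$ under such a move, and once it exceeds the clamp $2k+1$ this produces a genuine discontinuity.  Concretely, with $k=4$ the points $(11,5)$ and $(12,4)$ in the lower-right quadrant are adjacent, yet your formula gives $F_r=10$ and $F_r=12$.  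The same failure occurs symmetrically in the upper-left quadrant.  Your verification plan, phrased as ``$1$-Lipschitz in each of $x$ and $y$,'' is exactly where this slips through.

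The fix is to use a formula whose two arguments move in \emph{opposite} directions under diagonal steps, so that $\min$ or $\max$ absorbs the change.  In the lower-right quadrant (with $a=d=0$, $b=c=1$) the choice $F_r(x,y)=\min\bigl(x,\,4k+1-y\bigr)$ does the job: it matches the prescribed boundary data, lands in $[2k+1,3k+1]$, and is $1$-Lipschitz for all adjacencies including diagonal ones.  This is, after unwinding the coordinate-centring notation, precisely the paper's formula $3k+1-\max(s,t)$ in its parametrization of that quadrant.  With this correction (and its reflection in the upper-left quadrant) your seam checks go through; your piecewise approach then yields a valid extension, and indeed the same one the paper constructs.
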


Our proof of \lemref{lem: unit square covering} makes use of the following device.

\begin{definition}[Coordinate-centring function]\label{def:Centring C}
Define the map $C \colon [\overline{0}, \overline{1}] \to [\overline{0}, \overline{1}]$ by
$$C(x) = \begin{cases} x+1 & \overline{0} = k \leq x \leq 2k-1 = \overline{0} + k - 1\\
x & x = \overline{0} + k = 2k \text{ or } x = \overline{0} + k +1 = 2k+1\\
x-1 & \overline{0} + k + 2 = 2k+2 \leq x \leq 3k +1= \overline{1}.\end{cases}
$$
We refer to this map $C$ as the \emph{coordinate-centring function}.
\end{definition}

The coordinate-centring function plays a prominent role in all that follows.  We will develop some of its uses before proving \lemref{lem: unit square covering}.
The idea is that $C$ may be used to progressively move each coordinate of a  point of $[\overline{0}, \overline{1}]^n$  closer to that of a ``central" point, in a certain sense.
Namely, for any $n$, the $n$-cube $[\overline{0}, \overline{1}]^n$ has a \emph{central $2^n$-clique}, which is the unit $n$-cube  $[\overline{0} + k, \overline{0} +k+1]^n$ at  the centre of $[\overline{0}, \overline{1}]^n$. For instance, the central $4$-clique of $[\overline{0}, \overline{1}]^2$ consists of the $4$ points
$$\{ (\overline{0} + k, \overline{0} +k), (\overline{0} + k+1, \overline{0} +k), (\overline{0} + k, \overline{0} +k+1), (\overline{0} + k+1, \overline{0} +k+1) \}.$$
Define a function $c\colon \{k, 3k+1\} \to \{ 2k, 2k+1\}$ by $c(k) = 2k$ (or $c(\overline{0}) = \overline{0} + k$) and $c(3k+1) = 2k+1$ (or $c(\overline{1}) = \overline{0} + k+1$).  Then for each corner of $[\overline{0}, \overline{1}]^n$,
$$(y_1, \ldots, y_n) \text{ with }  \{ y_1, \ldots, y_n\} \subseteq \{ \overline{0}, \overline{1} \},$$
the closest point to that corner in the central clique of $[\overline{0}, \overline{1}]^n$ is $(c(y_1), \ldots,  c(y_n))$.  By iterating the coordinate-centring function, we may obtain the same result: for each coordinate of  the corner point, we have $C^k(y_i) = c(y_i)$.  Indeed, we can parametrize the path in $[\overline{0}, \overline{1}]^n$  from corner to closest central-clique point as
\begin{equation}\label{eq: diagonal of orthant}
\{ (C^s(y_1), \ldots, C^s(y_n)) \mid s = 0, \ldots, k\},
\end{equation}
where we mean $C^0(y_i) = y_i$.
We may divide the $n$-cube $[\overline{0}, \overline{1}]^n$ into $2^n$ sub-cubes, or \emph{orthants} (quadrant if $n=2$) as we will refer to them in the sequel, consisting of products of $n$  intervals
$$I_1 \times \cdots \times I_n,$$
with each interval $I_j$ equal to $[\overline{0}, 2k]$ or $[2k+1, \overline{1}]$.  Then the points \eqref{eq: diagonal of orthant} constitute a diagonal from (outside) corner to opposite (central) corner of one such orthant.

The coordinate-centring function is also useful for describing the other points in each quadrant of $[\overline{0}, \overline{1}]^2$, as well as edges and diagonals of faces in
$[\overline{0}, \overline{1}]^n$.
 In $[\overline{0}, \overline{1}]^2$, from a corner $P = (\overline{v_1}, \overline{v_2})$, with $(v_1, v_2) \in [0, 1]^2$, the parts of the horizontal and vertical edges that leave the corner, and are in the same quadrant of $[\overline{0}, \overline{1}]^2$ as that corner, consist of the points
$$\{ (C^s(\overline{v_1}), \overline{v_2}) \mid s = 0, \ldots, k\} \quad \text{and} \quad  \{ \overline{v_1}, C^t(\overline{v_2})) \mid t = 0, \ldots, k\},$$
respectively.   In fact, we may re-describe the interpolation of \eqref{eq: F on edge} entirely in terms of the coordinate-centring function, as follows.

If $(v_1, v_2) \in [0, 1]^2$, then notice that $(1-v_1, v_2)$ is  the horizontally opposite corner of $[0, 1]^2$  and $(v_1, 1-v_2)$ is  the vertically opposite corner.

\begin{lemma}\label{lem: C parametrize}
With reference to the set-up for \lemref{lem: unit square covering} above, write $f\colon [0, 1]^2 \to [0, 1]^n$ and $F \colon [\overline{0}, \overline{1}]^2 \to [\overline{0}, \overline{1}]^n$ coordinate-wise, as
$$f(v) = \big( f_1(v), \ldots, f_n(v) \big) \quad \text{and} \quad F(x) = \big( F_1(x), \ldots, F_n(x) \big),$$
with $v = (v_1, v_2) \in  [0, 1]^2$ and $x \in [\overline{0}, \overline{1}]^2$.
\begin{itemize}
\item[(A)] Points in the same quadrant of $[\overline{0}, \overline{1}]^2$ as $\overline{v}$, and along the horizontal edge that leaves $\overline{v}$ towards its horizontally opposite corner $\overline{v'} = \overline{(1-v_1, v_2)}$, are given by
$$\overline{v} + s[ (1-v_1, v_2) - v] = \big( C^s(\overline{v_1}), \overline{v_2}\big) \text{ for } s= 0, \ldots, k.$$
\item[(B)] Points in the same quadrant of $[\overline{0}, \overline{1}]^2$ as $\overline{v}$, and along the vertical edge that leaves $\overline{v}$ towards its vertically opposite corner
$\overline{v'} =  \overline{(v_1, 1-v_2)}$, are given by
$$\overline{v} + t[ (v_1, 1-v_2) - v] = \big( \overline{v_1},  C^t(\overline{v_2})\big) \text{ for } t= 0, \ldots, k.$$
\item[(C)] Along the horizontal edge of (A), we may re-write the interpolation of  \eqref{eq: F on edge} coordinate-wise  in the form
$$F_i\big( \overline{v} + s[ (1-v_1, v_2) - v]\big) = F_i\big( C^s(\overline{v_1}), \overline{v_2}\big) = \begin{cases} C^s\big( \overline{f_i(v)} \big) & f_i(v) \not= f_i(1-v_1, v_2)\\
\overline{f_i(v)} & f_i(v) = f_i(1-v_1, v_2)\end{cases}$$
for each coordinate function $i = 1, \ldots, n$ and for each point on the edge $s= 0, \ldots, k$.
\item[(D)] Along the vertical edge of (B), we may re-write the interpolation of  \eqref{eq: F on edge} coordinate-wise  in the form
$$F_i\big( \overline{v} + t[ (v_1, 1-v_2) - v]\big) = F_i\big( \overline{v_1}, C^t(\overline{v_2})\big) = \begin{cases} C^t\big( \overline{f_i(v)} \big) & f_i(v) \not= f_i(v_1, 1-v_2)\\
\overline{f_i(v)} & f_i(v) = f_i(v_1, 1-v_2)\end{cases}$$
for each coordinate function $i = 1, \ldots, n$ and for each point on the edge $t= 0, \ldots, k$.
\end{itemize}
\end{lemma}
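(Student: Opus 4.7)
The plan is to treat parts (A) and (B) as direct one-dimensional calculations, and then to derive (C) and (D) by feeding those calculations into the interpolation formula \eqref{eq: F on edge} together with the corner-preservation Property (1). Parts (A) and (B) are identical after interchanging the two coordinates of $[\overline{0},\overline{1}]^2$, and similarly (C) and (D), so it suffices to set up (A) and (C) and remark that the other two are proved the same way.

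For (A), since $v = (v_1,v_2)$ is a corner of $[0,1]^2$, each $v_i \in \{0,1\}$ and the displacement vector $(1-v_1,v_2)-v$ equals $(1-2v_1,0)$. Hence
$$\overline{v} + s\bigl[(1-v_1,v_2)-v\bigr] = \bigl(\overline{v_1} + s(1-2v_1),\ \overline{v_2}\bigr),$$
and the claim reduces to the one-dimensional identity $\overline{v_1}+s(1-2v_1) = C^s(\overline{v_1})$ for $s=0,\ldots,k$. I would verify this by splitting into cases. If $v_1=0$ so that $\overline{v_1}=k$, then iterating $C$ using its first branch gives $C(k)=k+1$, $C(k+1)=k+2$, and so on up to $C^{k}(k)=2k$, so that $C^s(k)=k+s$ throughout $s=0,\ldots,k$. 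If $v_1=1$ so that $\overline{v_1}=3k+1$, the symmetric computation using the third branch gives $C^s(3k+1)=3k+1-s$ on the same range. In each case the iteration consumes one unit of the distance $k$ from the endpoint to the nearest point of the central pair $\{2k,2k+1\}$ per step, exactly matching $\overline{v_1}+s(1-2v_1)$.

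For (C), the first equality is immediate from (A), so only the second equality needs work. Apply \eqref{eq: F on edge} coordinate-wise to obtain
$$F_i\bigl(\overline{v}+s[(1-v_1,v_2)-v]\bigr) = F_i(\overline{v}) + s\bigl[f_i(1-v_1,v_2) - f_i(v)\bigr].$$
Property (1) gives $F(\overline{v})\in\{\overline{0},\overline{1}\}^n$, and since $f=\rho_{2k+1}\circ F$ on corners, $F_i(\overline{v})=\overline{f_i(v)}$. Now split on whether $f_i(v)=f_i(1-v_1,v_2)$: in the equal case the linear term vanishes and $F_i$ stays at $\overline{f_i(v)}$; in the unequal case the two values are opposite elements of $\{0,1\}$, so $f_i(1-v_1,v_2)-f_i(v)=1-2f_i(v)$, and part (A) applied with $v_1$ replaced by $f_i(v)$ yields $\overline{f_i(v)}+s(1-2f_i(v)) = C^s\bigl(\overline{f_i(v)}\bigr)$. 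Part (D) is identical with the roles of the two coordinates swapped.

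There is no substantive obstacle here: the lemma is essentially an assembly of the piecewise definition of the coordinate-centring function $C$ with the combinatorics of Properties (1) and (2). The one point demanding real attention is ensuring that, in the one-dimensional identity $\overline{v_1}+s(1-2v_1) = C^s(\overline{v_1})$, the iteration does not overshoot the central pair $\{2k,2k+1\}$ for $s\le k$. This is what restricts the parameter range in (A)--(D) to $\{0,\ldots,k\}$ rather than to the full edge length $\{0,\ldots,2k+1\}$, and it is also precisely what makes the coordinate-centring function the right device for describing half-edges in a single quadrant.
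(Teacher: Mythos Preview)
Your proposal is correct and follows essentially the same approach as the paper: both reduce (A) to the one-dimensional identity $\overline{v_1}+s(1-2v_1)=C^s(\overline{v_1})$ by a case split on $v_1\in\{0,1\}$, and both derive (C) by feeding \eqref{eq: F on edge} coordinate-wise and splitting on whether $f_i(v)=f_i(1-v_1,v_2)$. Your version is slightly tidier in that you make explicit the appeal to Property~(1) for $F_i(\overline{v})=\overline{f_i(v)}$ and you reuse the computation from (A) directly in (C) rather than repeating it, but the substance is identical.
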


\begin{proof}
(A)  With $v = (v_1, v_2) \in [0, 1]^2$, we have $(1-v_1, v_2) - v = (1-2v_1, 0)$, and
$$1-2v_1 =
\begin{cases} +1 & v_1 = 0\\
 -1 & v_1 = 1.\end{cases}
 $$
Meanwhile, for $1 \leq s \leq k$, we have
$$C^s(\overline{v_1}) =
\begin{cases} C^{s-1}(\overline{v_1})+1 & v_1 = 0\\
 C^{s-1}(\overline{v_1})-1 & v_1 = 1.\end{cases}
 $$
 It follows that we have
$$\overline{v} + s[ (1-v_1, v_2) - v] = \big( C^s(\overline{v_1}), \overline{v_2}\big)= \begin{cases} (\overline{v_1} + s, \overline{v_2}) & v_1 = 0\\
(\overline{v_1} - s, \overline{v_2}) & v_1 = 1,\end{cases}$$
for each $s$ with $0 \leq s \leq k$.

(B) Similar reasoning shows that, here, we have
$$\overline{v} + t[ (v_1, 1-v_2) - v] = \big(\overline{v_1}, C^t(\overline{v_2})\big)= \begin{cases} (\overline{v_1}, \overline{v_2} + t) & v_2 = 0\\
(\overline{v_1}, \overline{v_2} - t) & v_2 = 1,\end{cases}$$
for each $t$ with $0 \leq t \leq k$.

(C)  The interpolation \eqref{eq: F on edge} along (the part of) a horizontal edge of  $[\overline{0}, \overline{1}]^2$ that leaves the corner $\overline{v} = \overline{(v_1, v_2)}$, towards its horizontally opposite corner $\overline{v'} =  \overline{(1-v_1, v_2)}$, may be re-written---incorporating (A)---as
$$F\big( C^s(\overline{v_1}), \overline{v_2}\big) = \overline{f(v)} + s[ f(1-v_1, v_2) - f(v)]$$
for $s = 0, \ldots, k$.
Coordinate-wise, we have
$$F_i\big( C^s(\overline{v_1}), \overline{v_2}\big) = \overline{f_i(v)} + s[ f_i(1-v_1, v_2) - f_i(v)]$$
for each $i = 1, \ldots, n$.  Now, on the one hand, we have
$$f_i(1-v_1, v_2) - f_i(v) =
\begin{cases} +1 & f_i(v) \not= f_i(1-v_1, v_2) \text{ and } f_i(v) = 0\\
 -1 & f_i(v) \not= f_i(1-v_1, v_2) \text{ and } f_i(v) = 1\\
 0 & f_i(v) = f_i(1-v_1, v_2).\end{cases}
 $$
On the other hand, for  $1 \leq s \leq k$, we have
$$C^s\big( \overline{f_i(v)} \big) =
\begin{cases} C^{s-1}(\overline{f_i(v)})+1 & f_i(v) = 0\\
 C^{s-1}(\overline{f_i(v)})-1 & f_i(v) = 1.\end{cases}
 $$
It follows that we have
$$\overline{f_i(v)} + s[ f_i(1-v_1, v_2) - f_i(v)] = \begin{cases} C^s\big( \overline{f_i(v)} \big) & f_i(v) \not= f_i(1-v_1, v_2)\\
\overline{f_i(v)} & f_i(v) = f_i(1-v_1, v_2)\end{cases}$$
as asserted.

(D) With $\overline{v} = \overline{(v_1, v_2)}$ and its vertically  opposite corner $\overline{v'} =  \overline{(v_1, 1-v_2)}$, similar steps to those followed in proving (C) result in
$$\overline{f_i(v)} + s[ f_i(v_1,1- v_2) - f_i(v)] = \begin{cases} C^t\big( \overline{f_i(v)} \big) & f_i(v) \not= f_i(v_1, 1-v_2)\\
\overline{f_i(v)} & f_i(v) = f_i(v_1, 1-v_2),\end{cases}$$
and hence the assertion.
\end{proof}

Finally, by way of developing uses of the of coordinate-centring function, we note that, for $v = (v_1, v_2) \in [0,1]^2$, the quadrant of points of $[\overline{0}, \overline{1}]^2$ that contains the corner $\overline{v} = (\overline{v_1}, \overline{v_2})$ may be described as the set of points
$$\{ \big(C^s(\overline{v_1}), C^t(\overline{v_2}\big) \mid 0 \leq s, t \leq k\}.$$
Amongst these points, we may distinguish the outer edges of the quadrant by (A) and (B) of \lemref{lem: C parametrize}, and the diagonal of this quadrant by \eqref{eq: diagonal of orthant} ($n = 2$).  Effectively, the coordinate-centring function provides us with a coordinatization of each quadrant of $[\overline{0}, \overline{1}]^2$.

Having thus prepared the ground thoroughly, we now embark upon our proof:

\begin{proof}[Proof of \lemref{lem: unit square covering}]
The idea is to ``fold" the square $[\overline{0}, \overline{1}]^2$ into the cube $[\overline{0}, \overline{1}]^n$, matching the edges of the square with the edges or diagonals of the cube as specified by the give $F$.  Note however, that $F$ may map different corners to the same corner, and also $F$ will not be an embedding in general.  Furthermore, even when $F$ does end up an embedding, we interpolate using a number of points:  For us, an edge  and any diagonal of a cube have the same ``length," but this is not so geometrically.   So the extension of $F$ to the square will not literally be a fold.

Divide each quadrant of the square $[\overline{0}, \overline{1}]^2$ into two triangles using the diagonals of the square.  The situation is illustrated in (A) of \figref{fig: Lem 5-2 second}. Once again (round) dots--both solid and open---indicate points on which $F$ is already defined.  Squares indicate (interior) points on the diagonals; we have yet to extend $F$ over these points.  We have preserved the (dotted) vertical and horizontal gridlines that appeared in the figures of the proof of \thmref{thm: 2-D subdivision map rectangle} and whose attributes were described there.  Recall that these gridlines do not pass through points, but do separate the square into quadrants, each of which projects to one corner of  $[0, 1]\times [0,1]$ under $\rho_{2k+1}$.
(To pursue the folding analogy a little, the diagonals and these horizontal and vertical gridlines are the folds of a \emph{square base}, or \emph{waterbomb base}, preliminary fold---see, e.g., \cite[p.241]{DE-OR07}.)

As discussed around \eqref{eq: diagonal of orthant}, the segment along that diagonal from a corner of $[\overline{0}, \overline{1}]^2$ to its closest corner of the central $4$-clique---namely, the unique point of the central $4$-clique in the same quadrant as the corner---is a segment of length $k$.  Likewise in $[\overline{0}, \overline{1}]^n$:  Each corner of $[\overline{0}, \overline{1}]^n$ lies in a unique orthant, that also contains a unique corner of the central clique, and the segment from that corner of the $n$-cube to the (closest) corner of central clique that lies in its orthant is also a segment of length $k$.
As part of our extension of $F$, we match each diagonal  segment from corner to central clique in  $[\overline{0}, \overline{1}]^2$ with the segment from corner to central clique in $[\overline{0}, \overline{1}]^n$, in that orthant  determined by the image under $F$ of the corner from $[\overline{0}, \overline{1}]^2$.
\begin{figure}[h!]
\centering
   \begin{subfigure}{0.49\linewidth} \centering
    \includegraphics[trim=280 490 120 130,clip,width=\textwidth]{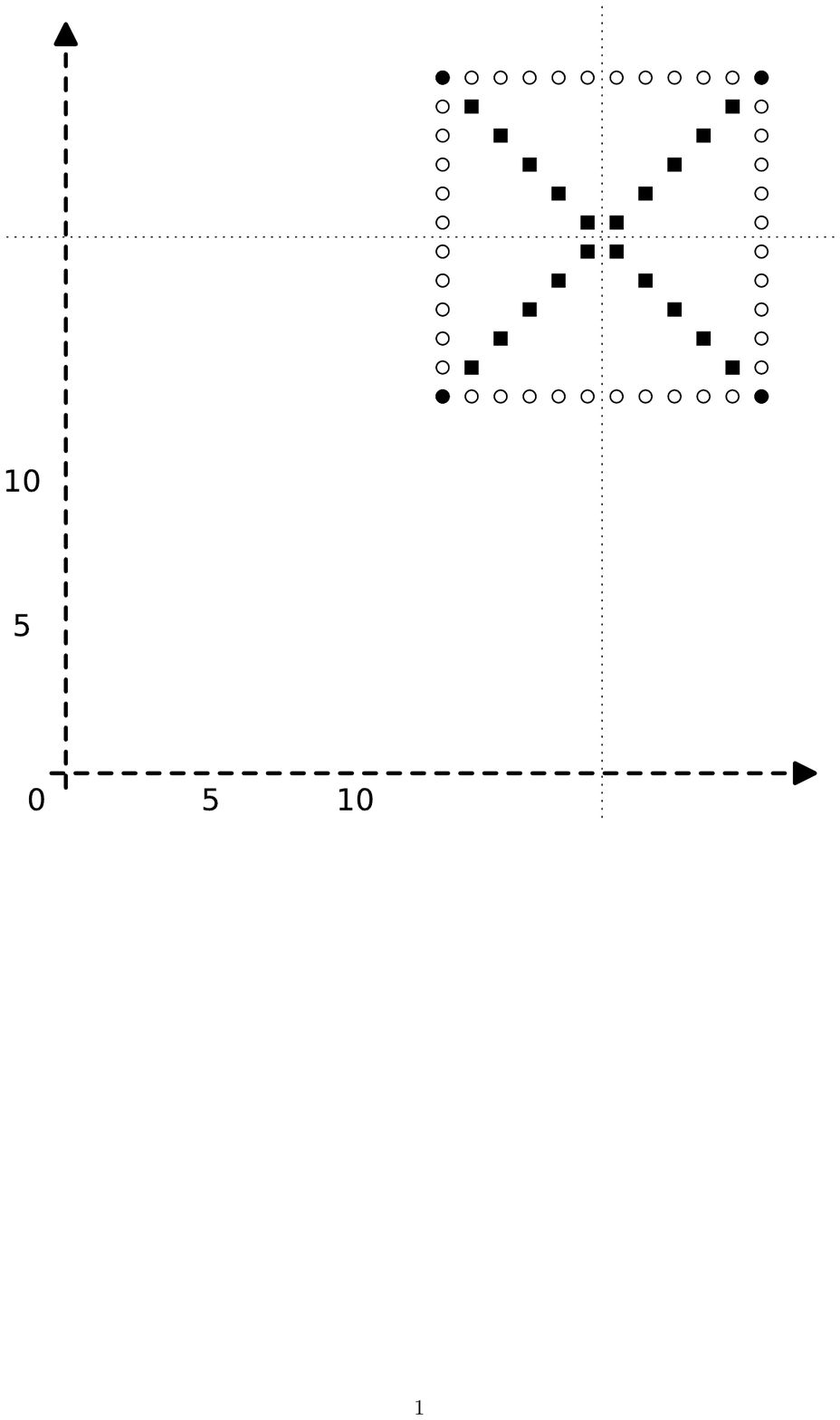}
     \caption{Diagonals, a.k.a.~Segments from Corner to Central Clique}\label{fig:Lem 5-2 secondfigA}
   \end{subfigure}
   \begin{subfigure}{0.49\linewidth} \centering
    \includegraphics[trim=280 490 120 130,clip,width=\textwidth]{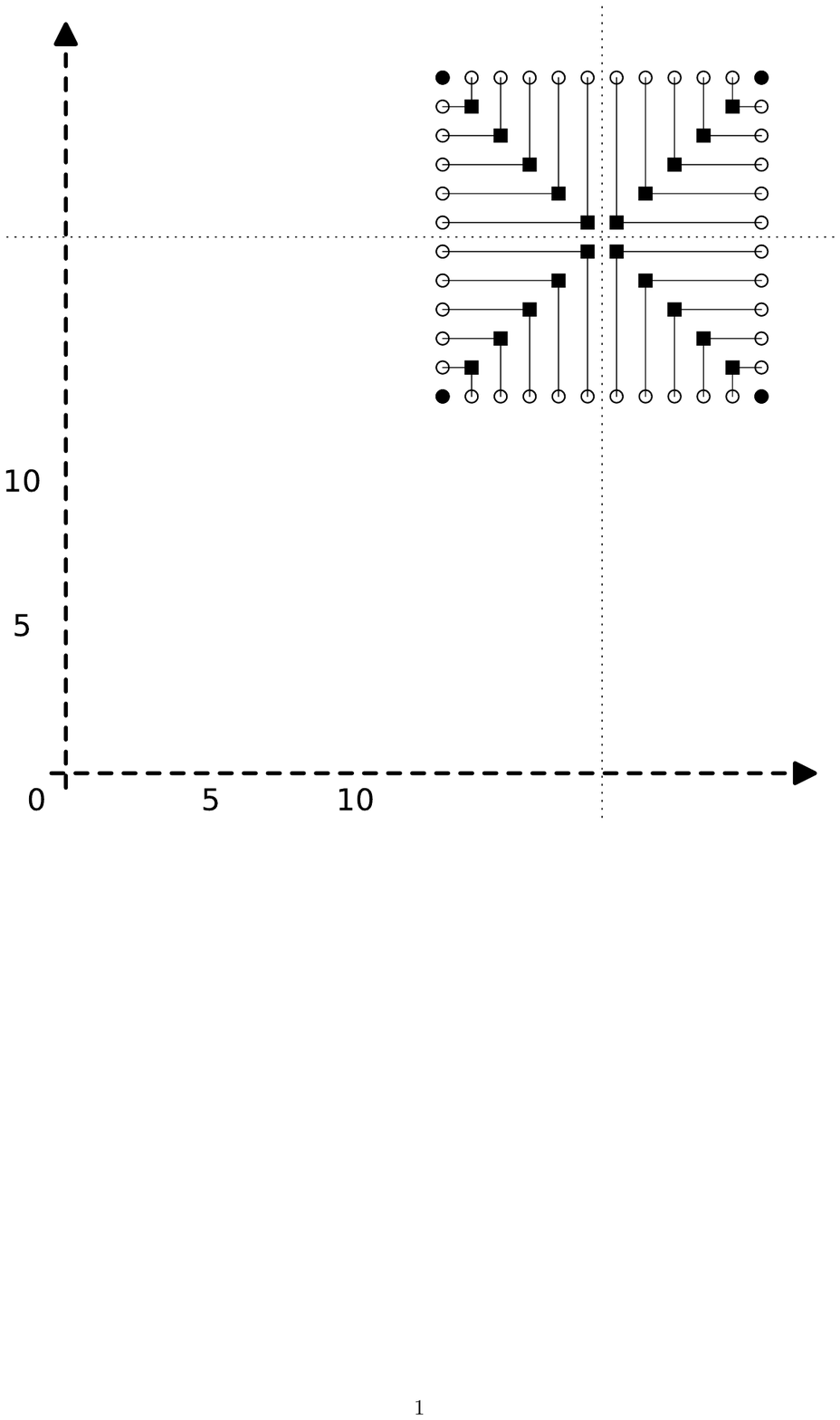}
     \caption{Interpolation Scheme in Each Quadrant}\label{fig:Lem 5-2 secondfigB}
   \end{subfigure}
\caption{Steps in Extension of $F$ (Illustrated with $2k+1 = 11$)} \label{fig: Lem 5-2 second}
\end{figure}
With \eqref{eq: diagonal of orthant} and the notation established in \lemref{lem: C parametrize}, we formulate this as follows.  For $v = (v_1, v_2) \in  [0, 1]^2$ and corresponding corner $\overline{v} = (\overline{v_1}, \overline{v_2}) \in [\overline{0}, \overline{1}]^2$, we define $F$ on the diagonal of the quadrant of $[\overline{0}, \overline{1}]^2$ that contains
$\overline{v}$ as
\begin{equation}\label{eq: F on diagonal}
F\big( C^s(\overline{v_1}), C^s(\overline{v_2}) \big) =  \big( C^s(\overline{f_1(v)}), \ldots, C^s(\overline{f_n(v)}) \big) \text{ for } s = 0, \ldots, k.
\end{equation}
Then, our scheme for completing the extension of $F$ is, in each quadrant of  $[\overline{0}, \overline{1}]^2$, to interpolate the values of $F$ from those on the outer edges of the quadrant to those on the diagonal.  The scheme is illustrated in (B) of \figref{fig: Lem 5-2 second}, with the (solid) lines indicating the lines along which we interpolate.

So, fix a quadrant of $[\overline{0}, \overline{1}]^2$ by choosing $v = (v_1, v_2) \in  [0, 1]^2$, with corresponding corner $\overline{v} = (\overline{v_1}, \overline{v_2}) \in [\overline{0}, \overline{1}]^2$ that determines its quadrant.  Recall that in the formulations of \lemref{lem: C parametrize}, we used the observation that, for
$v = (v_1, v_2) \in [0, 1]^2$, then  $(1-v_1, v_2)$ is  the horizontally opposite corner of $[0, 1]^2$  and $(v_1, 1-v_2)$ is  the vertically opposite corner.  Also, note that we are using coordinate-wise descriptions of $F$ and $f$ in the following.  On the quadrant of $[\overline{0}, \overline{1}]^2$ that contains $\overline{v}$. then, we define
\begin{equation}\label{eq: coordinate-wise interpolant}
F_i\big(  C^s(\overline{v_1}), C^t(\overline{v_2})\big) =
\begin{cases}
C^s\big( \overline{f_i(v)} \big) & t \leq s \text{ and } f_i(v) \not= f_i(1-v_1, v_2)\\
C^t\big( \overline{f_i(v)} \big) & t \leq s \text{ and } f_i(v) = f_i(1-v_1, v_2)\\
C^t\big( \overline{f_i(v)} \big) & s \leq t \text{ and } f_i(v) \not= f_i(v_1, 1-v_2)\\
C^s\big( \overline{f_i(v)} \big) & s \leq t \text{ and } f_i(v) = f_i(v_1, 1-v_2)
\end{cases}
\end{equation}
It is easy to see that this definition achieves the interpolation scheme indicated above: The formula specializes to retrieve formula  \eqref{eq: F on diagonal} on the diagonal ($s=t$) of this quadrant, as well as the (re-formulated versions of the) description of $F$ on the outer edges of the quadrant as in parts (C) and (D) of  \lemref{lem: C parametrize}.  Applying this formula to each quadrant of  $[\overline{0}, \overline{1}]^2$ extends $F$ over the whole square $[\overline{0}, \overline{1}]^2$.
See (A) of \figref{fig: Lem 5-2 third} for an illustration of this last extension of $F$.  Some of the (new) points on which we are defining $F$ at this step are indicated there by stars.  In this figure, we have adopted geographical, ``points of  the compass"  terminology to identify the various quadrants and the triangles within them.
\begin{figure}[h!]
\centering
   \begin{subfigure}{0.45\linewidth} \centering
    \includegraphics[trim=220 420 170 160,clip,width=\textwidth]{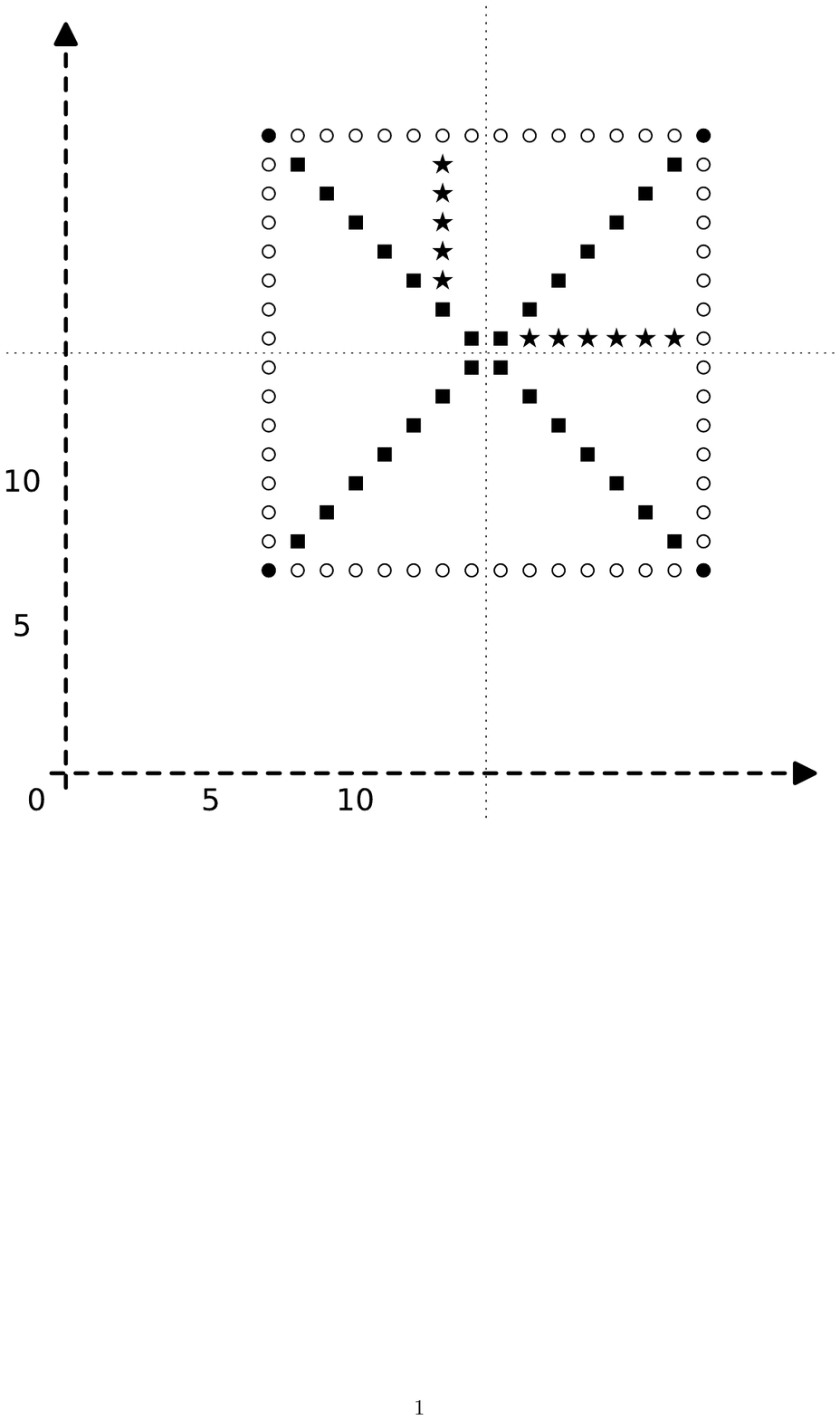}
     \caption{Interpolating from Outer Edge  to Diagonal: $s = 6$ and $t=0, \ldots, 6$ in $NNW$ and $s=7$  and $t=0, \ldots, 7$ in $ENE$}\label{fig:Lem 5-2 thirdfigA}
   \end{subfigure}
   \begin{subfigure}{0.45\linewidth} \centering
    \includegraphics[trim=220 420 170 160,clip,width=\textwidth]{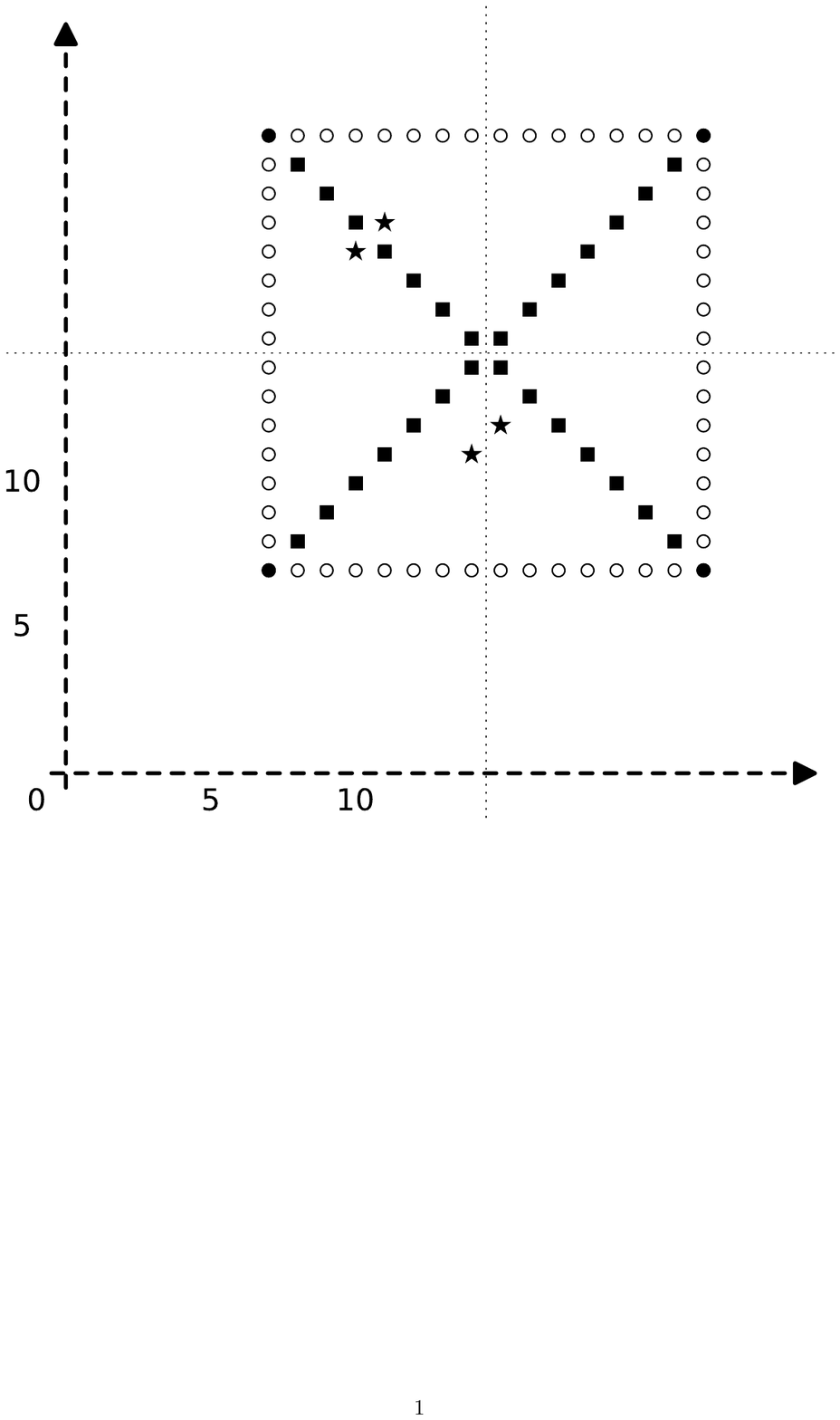}
     \caption{Adjacent Points in Same Quadrant, Different Triangles ($(s, t) = (4, 3)$ in $NNW$ and $WNW$),  and in Different Quadrants ($(s, t) = (7, 4)$ in $SW$ and $(s, t) = (7, 5)$ in $SE$) }\label{fig:Lem 5-2 thirdfigB}
   \end{subfigure}
\caption{Final Step in Extension of $F$ (Illustrated with $2k+1 = 15$)} \label{fig: Lem 5-2 third}
\end{figure}

It remains to check continuity of the extended $F$.  To this end, suppose we have two adjacent points $(p, q) \sim (p', q') \in [\overline{0}, \overline{1}]^2$.  We must confirm that $F(p, q) \sim F(p', q') \in [\overline{0}, \overline{1}]^n$.  We divide the possibilities into three cases: (i) both  $(p, q)$ and $(p', q')$ lie in a single triangle of one quadrant  (including the boundaries of said triangle); (ii) both  $(p, q)$ and $(p', q')$ lie in a single quadrant, but in different triangles of that quadrant;  (iii)  $(p, q)$ and $(p', q')$ lie in different quadrants.  Cases (ii) and (iii) are illustrated in (B) of \figref{fig: Lem 5-2 third}, in which the pairs of adjacent points are represented by stars.

\subsection{Case (i): Same Triangle}
Write the corner of the quadrant as $(\overline{v_1}, \overline{v_2})$, for $v = (v_1, v_2)$ a corner of $[0, 1]^2$.   With points in the quadrant given as $\big(  C^s(\overline{v_1}), C^t(\overline{v_2})\big)$ suppose that our points are in the triangle in which $t \leq s$.  We may write the two points as
$$(p, q) = \big(  C^s(\overline{v_1}), C^t(\overline{v_2})\big) \quad \text{and} \quad    (p', q') = \big(  C^{s'}(\overline{v_1}), C^{t'}(\overline{v_2})\big),$$
with $0 \leq t \leq s \leq k$, $0 \leq t' \leq s' \leq k$, and, because $(p, q) \sim (p', q')$, we must have $|s' - s| \leq 1$ and $|t' - t| \leq 1$.  From our  coordinate-wise definition of $F$ in \eqref{eq: coordinate-wise interpolant}, if $f_i(v) \not= f_i(1-v_1, v_2)$, then we have
$$F_i(p, q) = C^s\big( \overline{f_i(v)} \big) \quad \text{and} \quad F_i(p', q') = C^{s'}\big( \overline{f_i(v)} \big),$$
which differ by at most $1$ from each other, since we have $|s' - s| \leq 1$.  But if  $f_i(v) = f_i(1-v_1, v_2)$, then we have
$$F_i(p, q) = C^t\big( \overline{f_i(v)} \big) \quad \text{and} \quad F_i(p', q') = C^{t'}\big( \overline{f_i(v)} \big),$$
which again differ by at most $1$ from each other, since we also have $|t' - t| \leq 1$.  Each coordinate of $F(p, q)$ and $F(p', q')$ differs by at most $1$, meaning that $F(p, q) \sim F(p', q')$.   If our points are in the triangle in which $s \leq t$, then a similar argument using the appropriate cases of \eqref{eq: coordinate-wise interpolant} arrives at the same conclusion.    This shows that $F$ preserves adjacencies in Case (i).

\subsection{Case (ii): Same Quadrant, Different Triangles}
A typical situation in this case is that illustrated in the $NW$ quadrant of (B) of \figref{fig: Lem 5-2 third} (adjacent points represented by stars).  Unless both $(p, q)$ and $(p', q')$ are in one triangle, which would place us back in Case (i), we must have $|p'-p| = 1$ and $|q' - q| = 1$ with $(p, q)$ and $(p', q')$ lying on either side of a diagonal.
Points in the quadrant are $\big(  C^s(\overline{v_1}), C^t(\overline{v_2})\big)$, with  $(\overline{v_1}, \overline{v_2})$ the corner of the quadrant and the diagonal consisting of those points with $0 \leq s=t \leq k$.  WLOG, suppose we have $(p, q) = \big(  C^s(\overline{v_1}), C^t(\overline{v_2})\big)$ with $t < s$ and $(p', q') = \big(  C^{s'}(\overline{v_1}), C^{t'}(\overline{v_2})\big)$ with $s'< t'$. Then, for some $s$ with $1 \leq s \leq k-1$, we must have $(p, q) = \big( C^s(x_1), C^{s-1}(x_2)\big)$ and $(p', q')= \big( C^{s-1}(x_1), C^{s}(x_2)\big)$.  Then for each coordinate, we have
$$F_i(p, q) =  \begin{cases}
 C^s\big( \overline{f_i(v)} \big) &  f_i(v) \not= f_i(1-v_1, v_2)\\
 C^{s-1}\big( \overline{f_i(v)} \big) & f_i(v) = f_i(1-v_1, v_2)\end{cases}$$
and
$$F_i(p', q') =  \begin{cases}
C^{s-1}\big( \overline{f_i(v)} \big)&  f_i(v) \not= f_i(v_1, 1-v_2)\\
  C^s\big( \overline{f_i(v)} \big)& f_i(v) = f_i(v_1, 1-v_2) .\end{cases}$$
The possible values here either agree or they differ by $1$ since each application of the coordinate-centring function $C$ increases or decreases the input by $1$.  Either way, each coordinate of $F(p, q)$ and $F(p', q')$ differs by at most $1$, meaning that $F(p, q) \sim F(p', q')$ and  $F$ preserves adjacencies in Case (ii) also.

\subsection{Case (iii): Different Quadrants}
Here, a typical situation  is that illustrated in the $SE$ and $SW$ quadrants of (B) of \figref{fig: Lem 5-2 third} (again, adjacent points represented by stars).  Note that we have defined $F$ so that the central clique of  $[\overline{0}, \overline{1}]^2$ is mapped to a subset of the central clique of $[\overline{0}, \overline{1}]^n$ in which, tautologically, every point is adjacent to every other.  Therefore, in case (iii), we need not consider situations in which the two adjacent points are in diagonally adjacent quadrants, which would force both points to be in the central clique of $[\overline{0}, \overline{1}]^2$.     Suppose the points are in horizontally adjacent quadrants, with corners $(\overline{v_1}, \overline{v_2})$ and $(\overline{1-v_1}, \overline{v_2})$.  WLOG, suppose that $(p, q) = \big(  C^s(\overline{v_1}), C^t(\overline{v_2})\big)$ and $(p', q') = \big(  C^s(\overline{1-v_1}), C^t(\overline{v_2})\big)$.  Because they are adjacent, we must have $s=k=s'$, with $|t' - t| \leq 1$.  Then we have
$$F_i(p, q) =  \begin{cases}
 C^k\big( \overline{f_i(v)} \big) &  f_i(v) \not= f_i(1-v_1, v_2)\\
 C^{t}\big( \overline{f_i(v)} \big) & f_i(v) = f_i(1-v_1, v_2)\end{cases}$$
and
$$F_i(p', q') =  \begin{cases}
C^{k}\big( \overline{f_i(1-v_1, v_2)} \big)&  f_i(1-v_1, v_2) \not= f_i(v)\\
C^{t'}\big( \overline{f_i(1-v_1, v_2)} \big)& f_i(1-v_1, v_2) = f_i(v) .\end{cases}$$
 Note, here, that we are using symmetry when applying the formulas of \eqref{eq: coordinate-wise interpolant} to two different quadrants: since $(1-v_1, v_2)$ is the corner of $[0, 1]^2$ horizontally opposite, $v$, so too is $v$ the corner of $[0, 1]^2$ horizontally opposite $(1-v_1, v_2)$.
 If $f_i(v) = f_i(1-v_1, v_2)$, then $F_i(p, q) = C^{t}\big( \overline{f_i(v)} \big)$ and $F_i(p', q') = C^{t'}\big( \overline{f_i(v)} \big)$  differ by at most one, since we have $|t' - t| \leq 1$.
 If $f_i(v) \not= f_i(1-v_1, v_2)$, then we compute, as in the proof of part (A) of  \lemref{lem: C parametrize}, that
$$C^k\big( \overline{f_i(v)} \big)  =  \begin{cases}
 2k&  f_i(v) =0\\
2k+1& f_i(v) = 1\end{cases}$$
and
$$C^k\big( \overline{f_i(1-v_1, v_2)} \big)  =  \begin{cases}
 2k&  f_i(1-v_1, v_2) =0\\
2k+1& f_i(1-v_1, v_2) = 1,\end{cases}
$$
whence we have $|F_i(p, q) - F_i(p', q')| = 1$ (both $f_i(v)$ and $f_i(1-v_1, v_2)$ must be either $0$ or $1$, remember). Either way, each coordinate of $F(p, q)$ and $F(p', q')$ differs by at most $1$, and we have $F(p, q) \sim F(p', q')$. If the two adjacent points are in vertically adjacent quadrants of $[\overline{0}, \overline{1}]^2$, a similar argument arrives at the same conclusion.  This completes the check of continuity  in Case (iii) and with it, the proof.
\end{proof}

Now we extend  \thmref{thm: 2-D subdivision map rectangle} to the case in which the domain is an arbitrary 2D digital image.

\begin{theorem}\label{thm: 2-D subdivision map}
Suppose we are given a map $f \colon X \to Y$ of digital images $X \subseteq \Z^2$ and $Y \subseteq \Z^n$.  For any $k \geq 1$,  there is  a map $\widehat{f} \colon S(X, 2k+1) \to S(Y, 2k+1)$ that makes the following diagram commute:
$$\xymatrix{ S(X, 2k+1) \ar[d]_{\rho_{2k+1}} \ar[r]^-{\widehat{f}} & S(Y, 2k+1) \ar[d]^{\rho_{2k+1}}\\
X \ar[r]_-{f} & Y}$$
\end{theorem}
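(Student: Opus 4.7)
The plan is to adapt the piece-by-piece construction of \thmref{thm: 2-D subdivision map rectangle} to an arbitrary $2$D domain $X$, treating each combinatorial face of $X$ (centre, edge segment, complete $4$-clique, or ``incomplete'' corner quadrant) separately and then gluing the results together.

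First I would define $\widehat{f}$ on the ``$1$-skeleton'' of $S(X, 2k+1)$: set $\widehat{f}(\overline{x}) = \overline{f(x)}$ for each centre $\overline{x}$ with $x \in X$, and for each adjacent pair $x \sim_X x'$ define $\widehat{f}$ along the length-$(2k+1)$ straight segment from $\overline{x}$ to $\overline{x'}$ by the standard-cover formula $\overline{x} + s(x' - x) \mapsto \overline{f(x)} + s[f(x') - f(x)]$ of \thmref{thm: path odd subdivision map}. Second, for each complete $2 \times 2$ unit square (``$4$-clique'') $C \subseteq X$, apply \thmref{thm: 2-D subdivision map rectangle} to $f|_C$ viewed as $I_1 \times I_1 \to Y$ and use the resulting cover to define $\widehat{f}$ over the interior of the corresponding sub-square in $S(X, 2k+1)$; by the final assertion of \thmref{thm: 2-D subdivision map rectangle}, this agrees with the first step on each of the four edge-segments, and likewise two $4$-cliques sharing an edge of $X$ agree on the shared segment. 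Third, for points of $S(X, 2k+1)$ not yet covered---which lie in corner quadrants of some $S(x, 2k+1)$ in a direction $(\sigma, \tau) \in \{-1, +1\}^2$ for which the $4$-clique $\{x, x + (\sigma, 0), x + (0, \tau), x + (\sigma, \tau)\}$ fails to lie entirely in $X$---I would extend $\widehat{f}$ by applying the formula of \lemref{lem: unit square covering} with absent corners assigned the ``virtual'' value $f(x)$. On such a quadrant, $\widehat{f}$ is then determined by the standard cover along the inner edges (segments to those neighbours of $x$ that lie in $X$) together with the constant value $\overline{f(x)}$ along the outer edges (pointing away from $X$), interpolated via the coordinate-centring function of \defref{def:Centring C}.

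The main obstacle is verifying that the assembled $\widehat{f}$ is both well-defined on overlaps and continuous across all boundaries. Consistency on the middle row and column of each $S(x, 2k+1)$, where the four corner quadrants overlap, follows because all four local constructions restrict there to the standard cover on segments between $x$ and its neighbours in $X$, and to the common constant $\overline{f(x)}$ on rays pointing to non-neighbours. The subtlest point is continuity across the boundary between $S(x, 2k+1)$ and $S(x', 2k+1)$ when $x \sim_X x'$ but the adjacent quadrants on each side arise from ``incomplete'' $4$-cliques. Here one observes that the outer-edge values of the constant-fill extensions on each side lie in the central $2^n$-cliques of $S(f(x), 2k+1)$ and $S(f(x'), 2k+1)$ respectively; since $f(x) \sim_Y f(x')$ by continuity of $f$, these central cliques are adjacent in $S(Y, 2k+1)$, delivering the required adjacency of $\widehat{f}$-values across the boundary. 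Commutativity of the diagram is automatic from the construction, since each local definition maps its portion of $S(x, 2k+1)$ into $S(f(x), 2k+1)$.
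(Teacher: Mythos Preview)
Your overall strategy matches the paper's: define $\widehat{f}$ on centres, extend along axis-parallel segments, then fill in the remaining quadrants via the interpolation scheme of \lemref{lem: unit square covering}.  The paper organizes the last step by ``cookie-cutter'' squares $[\overline{i},\overline{i+1}]\times[\overline{j},\overline{j+1}]$ and invokes a generalization (\propref{prop: unit square covering general}) that handles any subset $V\subseteq[0,1]^2$ of corners at once; your quadrant-by-quadrant scheme with ``virtual'' value $f(x)$ assigned to absent corners is exactly equivalent to that proposition, since setting the virtual value equal to $f(x)$ forces the ``$f_i(v)=f_i(v')$'' branch of the interpolation formula.

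Two points need repair, however.  First, in Step~1 you interpolate along \emph{every} adjacent pair $x\sim_X x'$, including diagonal neighbours.  The paper explicitly restricts to horizontal and vertical neighbours here, and for good reason: along the diagonal of a quadrant, \lemref{lem: unit square covering} assigns the value $\big(C^s(\overline{f_1(x)}),\ldots,C^s(\overline{f_n(x)})\big)$, which moves each coordinate toward the central clique regardless of $f(x')$, whereas your standard-cover formula $\overline{f(x)}+s[f(x')-f(x)]$ holds coordinate $i$ fixed whenever $f_i(x)=f_i(x')$.  These disagree, so your Step~1 conflicts with Steps~2 and~3 on diagonal segments.  The fix is simply to drop diagonal segments from Step~1, as the paper does.

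Second, your continuity argument across the boundary of $S(x,2k+1)$ and $S(x',2k+1)$ is not right as stated.  There is no ``central $2^n$-clique'' of $S(f(x),2k+1)$ for odd subdivision---its centre is the single point $\overline{f(x)}$---and in any case the values along the relevant boundary are not confined to any such clique.  What actually needs checking is continuity between two quadrants of the \emph{same} cookie-cutter square that have been filled from different corners $x$ and $x'$ using different virtual data.  This is the content of Case~(iii) in the proof of \lemref{lem: unit square covering} (and its generalization \propref{prop: unit square covering general}): one computes coordinate-wise that at the common boundary both sides take the value $C^k(\overline{f_i(\cdot)})\in\{2k,2k+1\}$ in coordinates where $f_i(x)\neq f_i(x')$, and $C^t(\overline{f_i(x)})=C^t(\overline{f_i(x')})$ in coordinates where they agree, so adjacent inputs go to adjacent outputs.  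You should replace the central-clique heuristic with this explicit coordinate check.
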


\begin{proof}
We use the notation established in previous results without comment.
Begin by defining $\widehat{f}$ on centres, as
\begin{equation}\label{eq: general centres}
\widehat{f}(\overline{x}) = \overline{f(x)}
\end{equation}
for each $x \in X$.  Then, in each $S(x, 2k+1) \subseteq S(X, 2k+1)$, extend $\widehat{f}$ horizontally and vertically from the centre to each edge in one of two ways. For each
\emph{vertical or horizontal} neighbour $x \sim x'$ in $X$, interpolate the values of $\widehat{f}$ along the vertical or horizontal segment joining  $\overline{x}$ and $\overline{x'}$ in $S(X, 2k+1)$. Where $x$  is missing one or more of its potential horizontal or vertical neighbours from $X$, extend $\widehat{f}$ as a constant from the centre out to that edge of  $S(x, 2k+1)$.
In terms of a formula, suppose $x \sim_{\Z^2} x'$ are vertical or horizontal neighbours in $\Z^2$, so that $x' - x$ has one coordinate $0$ and the other $\pm1$.    Then we define, for each $t  = 0, \ldots, k$,
\begin{equation}\label{eq: general between centres}
\widehat{f} \big( \overline{x} + t (x' - x)\big) = \begin{cases} \overline{ f(x)} + t[ f(x') - f(x)] & x' \in X\\ \overline{ f(x)}  & x' \not\in X.\end{cases}
\end{equation}
Notice that, in case $x' \in X$, we could equally well define
$$
\widehat{f} \big( \overline{x} + t (x' - x)\big) = \overline{ f(x)} + t[ f(x') - f(x)], \text{ for } t  = 0, \ldots, 2k+1
$$
to give values for $\widehat{f}$ on the segment in $S(X, 2k+1)$ that joins $\overline{x}$ and  $\overline{x'}$ (including the endpoints), and this gives the same values as \eqref{eq: general between centres} on the relevant points of $S(x, 2k+1)$ and $S(x', 2k+1)$.  The case of \eqref{eq: general between centres} in which $x' \in X$ is how we proceeded in \thmref{thm: 2-D subdivision map rectangle}, when vertical and  horizontal neighbours were always present.  Notice also that, at this point, we do not interpolate in this way between diagonal neighbours of $X$, such as $(x_1, x_2)$ and $(x_1+1, x_2+1)$.   See \figref{fig:Th5-4Fig1}  for an illustration of the progress so far, for the case in which $X$ consists of the $5$ points $X = \{ (0, 0), (2, 0), (1, 1),  (2, 1), (2,2)\}$.  As in the illustrations through the proof of \thmref{thm: 2-D subdivision map rectangle},  dots (open or closed)  represent the points on which $\widehat{f}$ has been defined so far. Centres are represented as solid dots; we define $\widehat{f}$ on these points by \eqref{eq: general centres}.  Open dots represent points on which we extend $\widehat{f}$ by \eqref{eq: general between centres}.  As in the proof of \thmref{thm: 2-D subdivision map rectangle}, it remains to extend the definition of $\widehat{f}$ to the points ``outside the centres" and to those in regions ``surrounded by centres." The difference here, though, is that with a non-rectangular $X$, we have a variety of behaviour to consider under each of these titles.
\begin{figure}[h!]
\centering
\includegraphics[trim=130 320 130 120,clip,width=\textwidth]{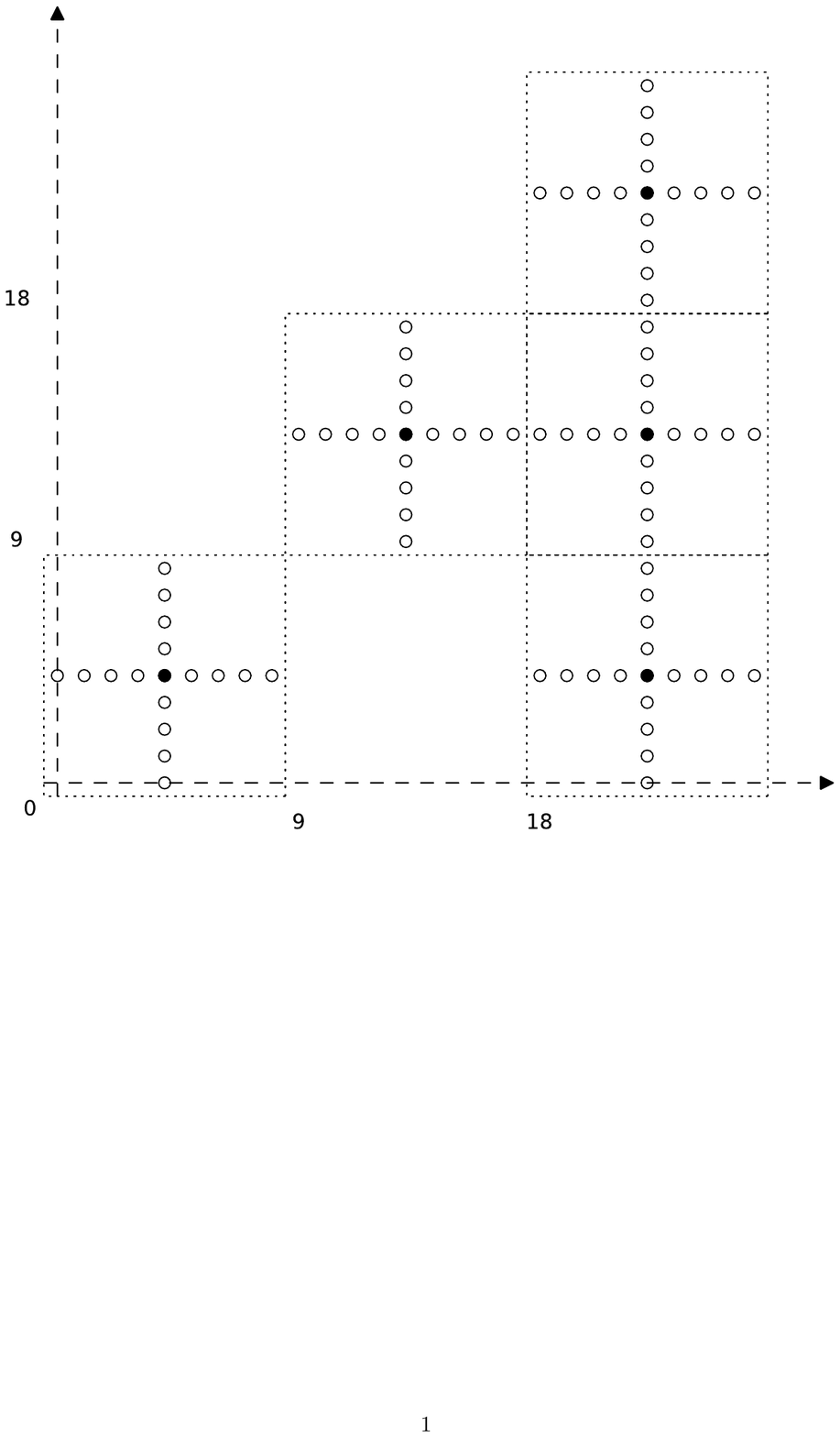}
\caption{  $\widehat{f}$ defined on centres and extended horizontally and vertically within each $S(x, 2k+1)$ by \eqref{eq: general between centres}.  Illustrated with $X = \{ (0, 0), (2, 0), (1, 1),  (2, 1), (2,2)\}$ and  $2k+1 = 9$.}\label{fig:Th5-4Fig1}
\end{figure}

We proceed as follows.   Since $X$ is finite, we may pick some rectangle that contains it.  Suppose we have $X \subseteq [M_1, M_2] \times [N_1, N_2]$ for suitable $M_1, M_2$ and $N_1, N_2$.  Then the squares
$$[\overline{i}, \overline{i+1}] \times [\overline{j}, \overline{j+1}] \text{ for } M_1-1 \leq i \leq M_2+1, N_1-1 \leq j \leq N_2+1$$
cover the whole of $S(X, 2k+1)$.  Some of these squares  may not include any points of  $S(X, 2k+1)$.  But where  $[\overline{i}, \overline{i+1}] \times [\overline{j}, \overline{j+1}] \cap S(X, 2k+1)$ is non-empty, we have already defined $\widehat{f}$ on the parts of the boundary
$$\partial\big( [\overline{i}, \overline{i+1}] \times [\overline{j}, \overline{j+1}] \big) \cap S(X, 2k+1)$$
that belong to $S(X, 2k+1)$, and we use the ideas of  \thmref{thm: 2-D subdivision map rectangle} to extend $\widehat{f}$ over all points of $S(X, 2k+1)$ included in this square.  The idea is that these squares act as ``cookie cutters," to divide $S(X, 2k+1)$, into various sub-regions of the squares, over which we may extend $\widehat{f}$ independently of each other.  This latter observation holds for the same reason it held in the proof of \thmref{thm: 2-D subdivision map rectangle}: any two points adjacent in $S(X, 2k+1)$ must both lie in a single
``cut-out" region
$$\big( [\overline{i}, \overline{i+1}] \times [\overline{j}, \overline{j+1}] \big) \cap S(X, 2k+1) \subseteq S(X, 2k+1)$$
for some $i, j$.  Thus, if we can extend $\widehat{f}$ over each of these pieces separately, we already have $\widehat{f}$ well-defined on their overlaps, and so we may assemble the piecewise-defined map into a global, continuous $\widehat{f}$ on the whole of $S(X, 2k+1)$.  In \figref{fig:Th5-4Fig2}, we have illustrated the idea.
\begin{figure}[h!]
\centering
\includegraphics[trim=130 320 130 120,clip,width=\textwidth]{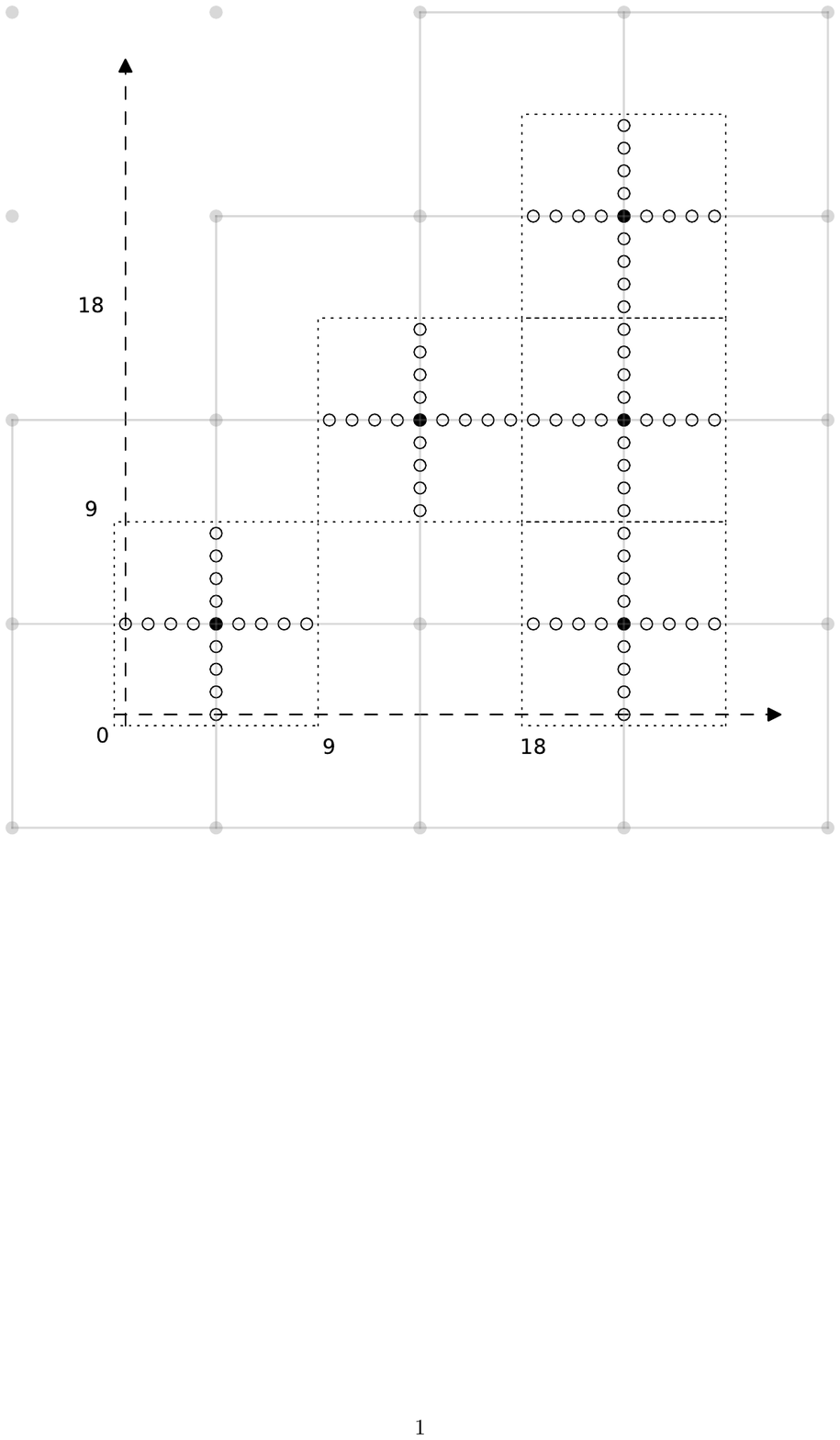}
\caption{  $S(X, 2k+1)$ covered by a rectangle $R = [\overline{M-1}, \overline{M+1}] \times [\overline{N-1}, \overline{N+1}]$.  Centers of $R$ and squares of $R$ for which $I_{i, j} \not= \emptyset$ are illustrated with $2k+1 = 9$.}\label{fig:Th5-4Fig2}
\end{figure}
In the figure, suppose again that $X$ consists of the $5$ points
$$X = \{ (0, 0), (2, 0), (1, 1),  (2, 1), (2,2)\},$$
and that \figref{fig:Th5-4Fig1} represents $\widehat{f}$ defined on the centers of $S(X, 2k+1)$ then extended  by \eqref{eq: general between centres} to the verticals and horizontals of $S(x, 2k+1)$ through each center.  Then in  \figref{fig:Th5-4Fig2}, we have included $S(X, 2k+1)$ in the rectangle $[\overline{-1}, \overline{3}]\times [\overline{-1}, \overline{3}]$ and added (in grey) the centers $(\overline{i}, \overline{j})$ from $S([-1, 3]\times [-1, 3], 2k+1)$.  Also, we have indicated those squares (bounded by grey edges) $[\overline{i}, \overline{i+1}] \times [\overline{j}, \overline{j+1}]$ for which the intersection
$$I_{i, j}:= \big( [\overline{i}, \overline{i+1}] \times [\overline{j}, \overline{j+1}] \big) \cap S(X, 2k+1) \subseteq S(X, 2k+1)$$
is non-empty.  These intersections, generally, consist of the union of any combination of the four quadrants of $[\overline{i}, \overline{i+1}] \times [\overline{j}, \overline{j+1}]$ that we encountered  in the proof of \lemref{lem: unit square covering}. Two cases are illustrated in \figref{fig: Th5-4 third}.
\begin{figure}[h!]
\centering
   \begin{subfigure}{0.49\linewidth} \centering
    \includegraphics[trim=180 470 300 190,clip,width=\textwidth]{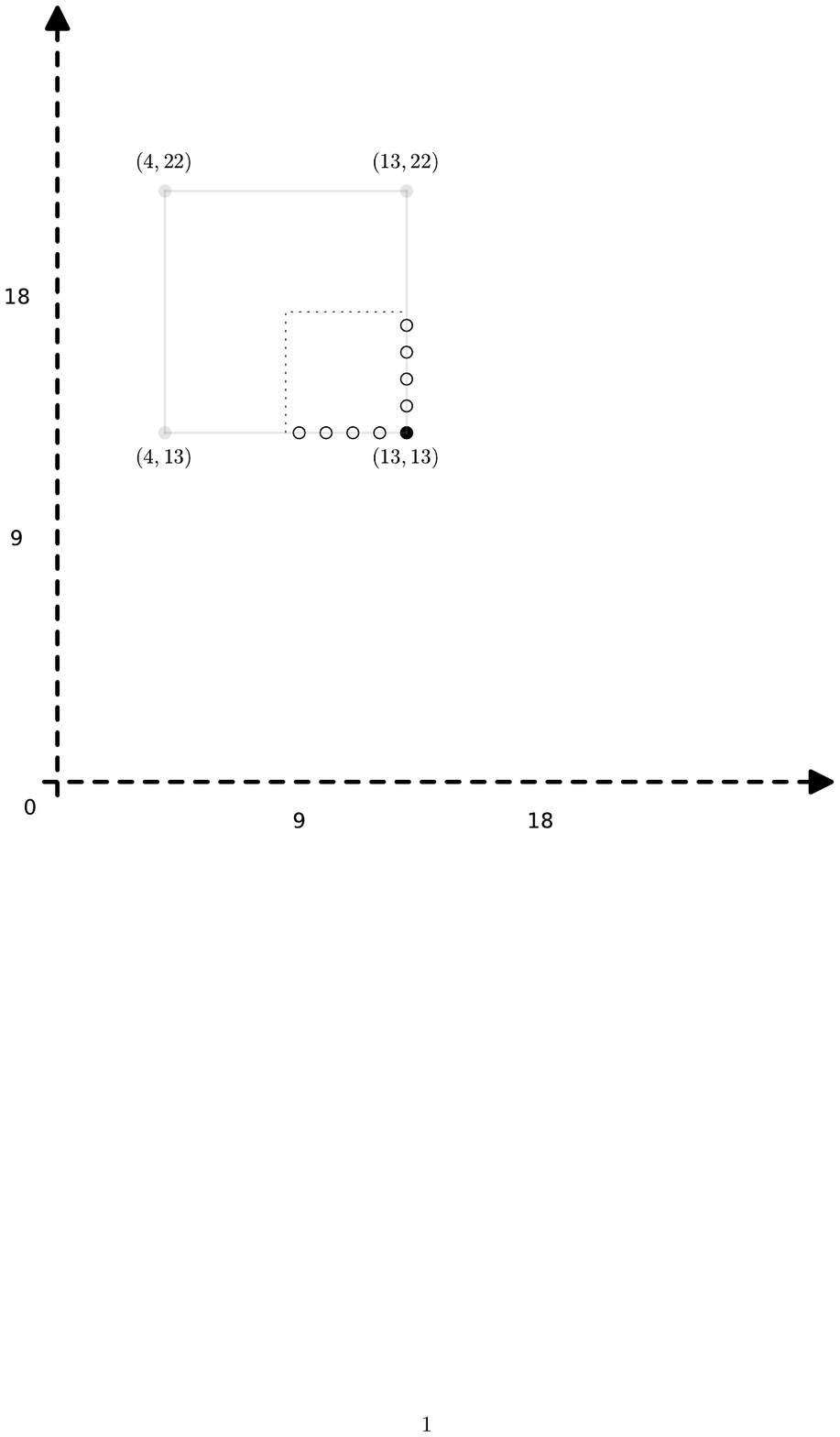}
     \caption{$I_{0, 1}$ gives the  $SE$ quadrant }
   \end{subfigure}
   \begin{subfigure}{0.49\linewidth} \centering
    \includegraphics[trim=180 470 300 190,clip,width=\textwidth]{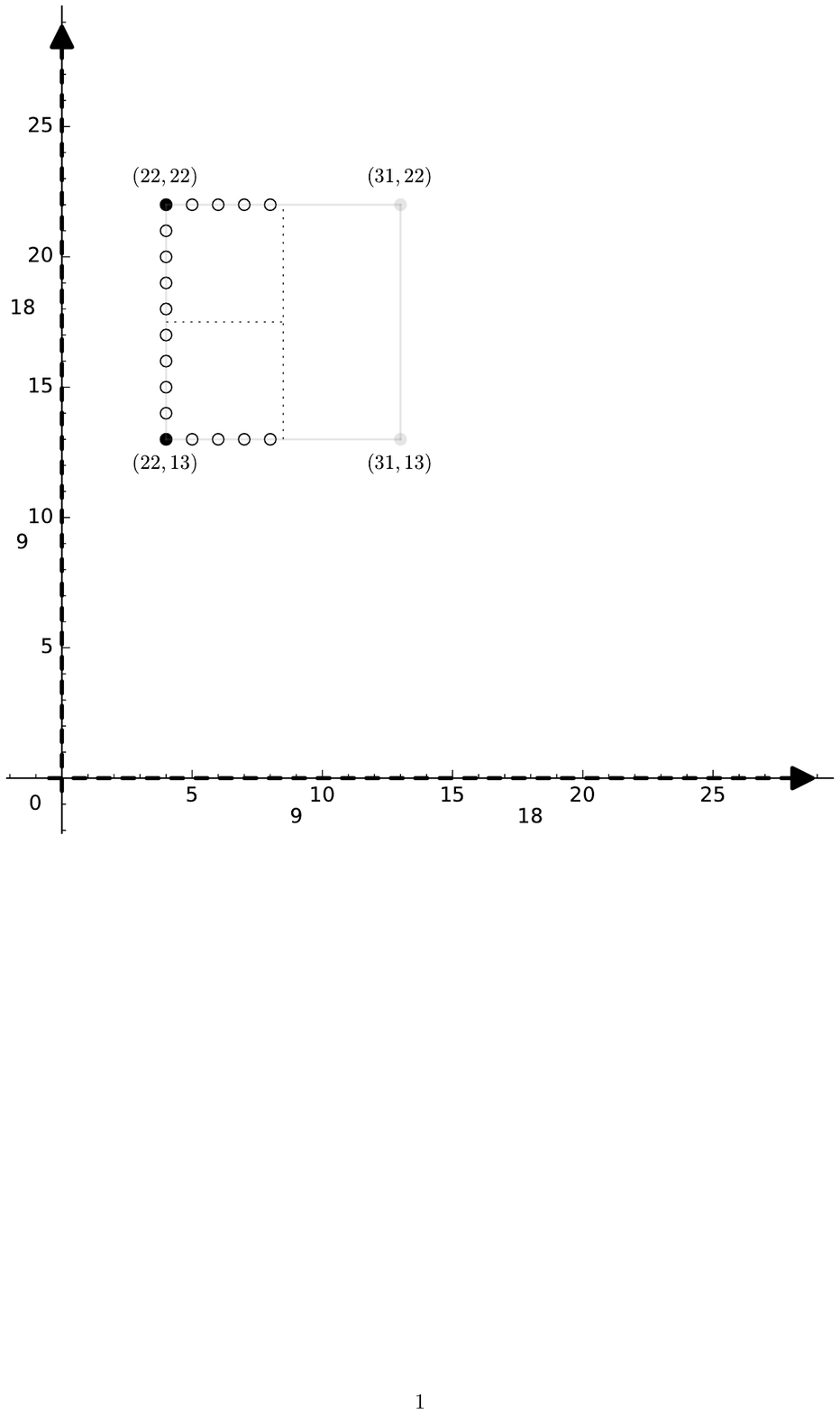}
     \caption{$I_{2, 1}$ gives vertically adjacent \\
     \hbox{\hskip0.25truein}  $NW$ and $SW$ quadrants}
   \end{subfigure}
\caption{One or two quadrants in $I_{i, j}$ (Illustrated with $[\overline{-1}, \overline{3}]^2$ containing $S(X, 2k+1)$ with $2k+1 = 9$)} \label{fig: Th5-4 third}
\end{figure}
Now it is a fact that, although one or more quadrants may be absent from $I_{i, j}$, the same methods as used in \lemref{lem: unit square covering} may be used to extend $\widehat{f}$ over $I_{i, j}$.

  In all cases, we use the device of the proof of \lemref{lem: unit square covering} to reduce the extension over $I_{i, j}$ to one of extending over the comparable parts of $[\overline{0}, \overline{1}]^2$: translate in $\Z^2$ the square $[\overline{i}, \overline{i+1}] \times [\overline{j}, \overline{j+1}]$ and $I_{i, j}$ within it to $[\overline{0}, \overline{1}]^2$ and the corresponding union of quadrants of $[\overline{0}, \overline{1}]^2$; translate some $n$-cube that contains the images under $\widehat{f}$  of all corners of $I_{i, j}$ to the $n$-cube $[\overline{0}, \overline{1}]^n$ in $\Z^n$; translate an extension over the suitable quadrants of $[\overline{0}, \overline{1}]^2$ to obtain an extension over $I_{i, j}$.
\end{proof}

We give the more general version of \lemref{lem: unit square covering} used in the above proof.  Suppose we have a (non-empty) subset
$$V \subseteq [0, 1]^2 = \{ (0, 0), (1, 0), (0, 1), (1,1) \}$$
and a map $f \colon V \to [0, 1]^n$. Those parts of the boundary of $[\overline{0}, \overline{1}]^2 \subseteq S([0, 1]^2, 2k+1)$ that contain points of $S(V, 2k+1)$, namely
$$\partial \big( [\overline{0}, \overline{1}]^2 \big) \cap S(V, 2k+1),$$
consist of the points
$$ \{ (C^s(\overline{v_1}), \overline{v_2}) \mid  (v_1, v_2) \in V, s = 0, \dots, k \} \cup  \{ \big(\overline{v_1}, C^t(\overline{v_2})\big) \mid  (v_1, v_2) \in V, t = 0, \dots, k \},$$
where $C$ denotes the coordinate-centring function of \defref{def:Centring C} used in the proof of \lemref{lem: unit square covering}.
Note that, for any $(v_1, v_2) \in [0, 1]^2$, its horizontal neighbour in $[0, 1]^2$ is $(1-v_1, v_2)$  and its vertical neighbour in $[0, 1]^2$ is $(v_1, 1-v_2)$.  Now suppose that we are given  a partial covering of $f$
$$F \colon \partial \big( [\overline{0}, \overline{1}]^2 \big) \cap S(V, 2k+1) \to [\overline{0}, \overline{1}]^n$$
that satisfies $\rho_{2k+1}\circ F = f\circ\rho_{2k+1}\colon \partial \big( [\overline{0}, \overline{1}]^2 \big) \cap S(V, 2k+1) \to [0, 1]^n$ and is defined on boundary points, for each $v = (v_1, v_2) \in V$ and $s, t = 0, \dots, k$ as
$$F\big( C^s(\overline{v_1}), \overline{v_2} \big) = \begin{cases} \overline{ f(v) } + s[ f(1-v_1, v_2) - f(v)] & (1-v_1, v_2) \in V\\
 \overline{ f(v) } & (1-v_1, v_2) \not\in V,\end{cases}$$
and
$$F\big( \overline{v_1}, C^t(\overline{v_2}) \big) = \begin{cases} \overline{ f(v) } + t[ f(v_1, 1-v_2) - f(v)] & (v_1, 1-v_2) \in V\\
 \overline{ f(v) } & (v_1, 1-v_2) \not\in V,\end{cases}$$

\begin{proposition}\label{prop: unit square covering general}
With the above notation, the map
$$F\colon \partial \big( [\overline{0}, \overline{1}]^2 \big) \cap S(V, 2k+1) \to [\overline{0}, \overline{1}]^n \subseteq \Z^n$$
may be extended in a canonical way to a continuous map
$F\colon [\overline{0}, \overline{1}]^2  \cap S(V, 2k+1) \to [\overline{0}, \overline{1}]^2$
that makes the following diagram commute:
$$\xymatrix{ [\overline{0}, \overline{1}]^2  \cap S(V, 2k+1)  \ar[r]^-{F} \ar[d]_{\rho_{2k+1}}
 & [\overline{0}, \overline{1}]^n  \ar[d]^{\rho_{2k+1}} \\
V \ar[r]_-{f} &   [0, 1]^n.}$$
In particular, for each $v \in V$, the extended $F$ satisfies
$$F\big(  [\overline{0}, \overline{1}]^2  \cap S(v, 2k+1) \big) \subseteq  [\overline{0}, \overline{1}]^n \cap S(f(v), 2k+1).$$
\end{proposition}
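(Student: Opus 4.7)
The plan is to mimic the construction in the proof of \lemref{lem: unit square covering}, applied only to those quadrants of $[\overline{0},\overline{1}]^2$ whose corner lies over a point of $V$, with a single modification to accommodate absent neighbors. Specifically, for each $v = (v_1,v_2) \in V$, introduce ``virtual neighbor'' values
$$f^h(v) = \begin{cases} f(1-v_1, v_2) & (1-v_1, v_2) \in V\\ f(v) & \text{otherwise,}\end{cases} \qquad f^v(v) = \begin{cases} f(v_1, 1-v_2) & (v_1, 1-v_2) \in V\\ f(v) & \text{otherwise.}\end{cases}$$
With this convention the given boundary data satisfies
$F\bigl( C^s(\overline{v_1}), \overline{v_2}\bigr) = \overline{f(v)} + s[f^h(v) - f(v)]$ for $0 \leq s \leq k$, degenerating to the constant $\overline{f(v)}$ precisely when the horizontal neighbor is absent; and symmetrically along vertical edges. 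This lets us treat the ``neighbor absent'' case as the ``$f^h(v) = f(v)$'' case of the formulas in \lemref{lem: unit square covering}.

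Next, for each $v \in V$ extend $F$ to the entire quadrant $\{(C^s(\overline{v_1}), C^t(\overline{v_2})) \mid 0 \leq s, t \leq k\}$ by the coordinate-wise formula of \eqref{eq: coordinate-wise interpolant}, but with $f_i(1-v_1, v_2)$ and $f_i(v_1, 1-v_2)$ replaced by $f^h_i(v)$ and $f^v_i(v)$. By construction, this agrees with the prescribed boundary values on $\partial([\overline{0},\overline{1}]^2) \cap S(V, 2k+1)$, and every value on the quadrant at $\overline{v}$ lies in $S(f(v), 2k+1)$, which both verifies $\rho_{2k+1}\circ F = f\circ \rho_{2k+1}$ and delivers the ``in particular'' fibrewise statement at the end of the proposition.

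For continuity, we revisit the three cases of \lemref{lem: unit square covering} for an adjacent pair $(p,q) \sim (p',q')$ in $[\overline{0},\overline{1}]^2 \cap S(V, 2k+1)$. Cases (i) and (ii) --- both points in the same triangle, or the same quadrant across a diagonal --- go through verbatim, since each coordinate of $F$ still equals either $C^s(\overline{f_i(v)})$ or $C^t(\overline{f_i(v)})$ and the possible differences are bounded by one. The main point to verify is Case (iii), where the two points lie in different quadrants: here both quadrants must actually belong to the domain $S(V, 2k+1)$ (otherwise one of the points is not in the domain at all), so both corresponding corners $v$ and $v'$ lie in $V$, and the compatibility $f^h(v) = f(v')$, $f^h(v') = f(v)$ (or the vertical analogue) reduces the verification to the argument already carried out in Case (iii) of \lemref{lem: unit square covering}. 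The principal --- but essentially bookkeeping --- obstacle is to confirm that the ``virtual neighbor'' substitution interacts correctly with all branches of the coordinate-wise formula; in particular, along any edge of $[\overline{0},\overline{1}]^2$ whose opposite quadrant is absent from $V$, the formula collapses to the constant $\overline{f(v)}$, so no continuity issue arises across such an edge.
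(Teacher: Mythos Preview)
Your proposal is correct and follows essentially the same route as the paper. The paper extends $F$ quadrant-by-quadrant via the formula \eqref{eq: coordinate-wise interpolant} with the conditional ``$f_i(v) = f_i(1-v_1,v_2)$'' replaced by ``either $(1-v_1,v_2)\notin V$ or $f_i(v)=f_i(1-v_1,v_2)$'' (and likewise vertically); your virtual-neighbour device $f^h, f^v$ is exactly a repackaging of this same substitution, and the continuity check by the three cases of \lemref{lem: unit square covering} is identical.
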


\begin{proof}
For each $v \in V$, write
$$I_v = [\overline{0}, \overline{1}]^2  \cap S(v, 2k+1)$$
for the corresponding quadrant of $[\overline{0}, \overline{1}]^2  \cap S(V, 2k+1)$ over which we wish to extend $F$.  If $v = (v_1, v_2)$, then $I_v$ has corner $\overline{v} =  (\overline{v_1}, \overline{v_2})$ and consists of the points
$$ \{ \big( C^s(\overline{v_1}), C^t(\overline{v_2}) \big) \mid  s, t = 0, \dots, k \}.$$
As in \lemref{lem: C parametrize}, we may re-write the given $F$ coordinate-wise on the outer edges of $I_v$ as
\begin{equation}\label{eq: outer edge horizontal}
F_i\big(  C^s(\overline{v_1}), \overline{v_2}\big) =
\begin{cases}
C^s\big( \overline{f_i(v)} \big) & (1-v_1, v_2) \in V \text{ and } f_i(v) \not= f_i(1-v_1, v_2)\\
\overline{f_i(v)} & (1-v_1, v_2) \not\in V \text{ or } f_i(v) = f_i(1-v_1, v_2),
\end{cases}
\end{equation}
and
\begin{equation}\label{eq: outer edge vertical}
F_i\big( \overline{v_1},  C^t(\overline{v_2}) \big) =
\begin{cases}
C^t\big( \overline{f_i(v)} \big) & (v_1, 1-v_2) \in V \text{ and } f_i(v) \not= f_i(v_1, 1-v_2)\\
\overline{f_i(v)} & (v_1, 1-v_2) \not\in V \text{ or } f_i(v) = f_i(v_1, 1-v_2).
\end{cases}
\end{equation}
Then, we interpolate these values over the quadrant $I_v$ using the same scheme as we used to write \eqref{eq: coordinate-wise interpolant} in the proof of
\lemref{lem: unit square covering}.  This leads to the following coordinate-wise definition of $F$ on  $I_v$:

\bigskip

$F_i\big(  C^s(\overline{v_1}), C^t(\overline{v_2})\big) = $

\begin{equation}\label{eq: general coordinate-wise interpolant}
\begin{cases}
C^s\big( \overline{f_i(v)} \big) & t \leq s \text{ and } f_i(v) \not= f_i(1-v_1, v_2)\\
C^t\big( \overline{f_i(v)} \big) & t \leq s \text{ and either } (1-v_1, v_2) \not\in V \text{ or } f_i(v) = f_i(1-v_1, v_2)\\
C^t\big( \overline{f_i(v)} \big) & s \leq t \text{ and } f_i(v) \not= f_i(v_1, 1-v_2)\\
C^s\big( \overline{f_i(v)} \big) & s \leq t \text{ and either } (v_1, 1-v_2) \not\in V \text{ or } f_i(v) = f_i(v_1, 1-v_2)
\end{cases}
\end{equation}
Notice that, on the diagonal of $I_v$, all cases agree and specialize to define
$$F_i\big(  C^s(\overline{v_1}), C^s(\overline{v_2})\big) = C^s\big( \overline{f_i(v)} \big), \text{ for } s = 0, \ldots, k.$$
This formulation applies to extend $F$ over any non-empty quadrant of $[\overline{0}, \overline{1}]^2  \cap S(V, 2k+1)$.  If $V = [0, 1]^2$, then it agrees with the extension of $F$ in  \lemref{lem: unit square covering}.

To check continuity of the extension in case $V \not= [0, 1]^2$,  we argue exactly as we did to check continuity in the proof of \lemref{lem: unit square covering}.  The three cases to consider are the same.  We only need be careful that the extra conditionals of  \eqref{eq: general coordinate-wise interpolant} do not introduce extra possibilities (which they do not).
 \end{proof}

We assert that \thmref{thm: 2-D subdivision map rectangle} and \thmref{thm: 2-D subdivision map} may be extended to even subdivisions as well.  But---as we remarked in
\remref{rem: even subdivisions 1D}, with respect to extending \thmref{thm: path odd subdivision map} to even subdivisions---doing so involves adapting the constructions and arguments so as to replace centres with central cliques.  In fact, for our purposes thus far, it has been sufficient to use \thmref{thm: path odd subdivision map}  and \thmref{thm: 2-D subdivision map}  as we have them, for odd subdivisions only.   If, for some reason it were necessary to involve even subdivisions, then the partial projections can often be used, as we used them in \corref{cor: path odd or even subdivision map}, to obtain covers of even subdivisions using the existence of covers for odd subdivisions.

Nonetheless, for the sake of completeness, we state a result here so as to have a statement of the fact that a covering map exists independently of the parity of $k$.   Just as was the case for \corref{cor: path odd or even subdivision map} vis-{\`a}-vis \thmref{thm: path odd subdivision map},
 the conclusion here for the case in which $k$ is odd is actually weaker than that of \thmref{thm: 2-D subdivision map}.

\begin{corollary}\label{cor: 2D odd or even subdivision map}
Suppose we are given a map $f\colon X \to Y$ with $X \subseteq \Z^2$ a 2D digital image and $Y \subseteq \Z^n$ any digital image.  For any  $k \geq 2$,  there is  a map of subdivisions
$$F\colon S(X, k+1)   \to S(Y, k)$$
that covers the given map, in the sense that the following diagram commutes:
$$\xymatrix{ S(X, k+1) \ar[d]_{\rho_{k+1}} \ar[r]^-{F} & S(Y, k) \ar[d]^{\rho_{k}}\\
X \ar[r]_-{f} & Y}$$
\end{corollary}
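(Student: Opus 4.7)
The plan is to imitate the proof of \corref{cor: path odd or even subdivision map} for paths, reducing the case of general $k$ to the odd-subdivision result of \thmref{thm: 2-D subdivision map} by composing with the partial projection $\rho^c_{k+1}$ of \defref{def: rho^c} on either the domain or the codomain, according to the parity of $k$.

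If $k$ is even, then $k+1$ is odd, so \thmref{thm: 2-D subdivision map} applies directly at level $k+1$ (taking the ``$2k+1$'' of that theorem to be $k+1$) and produces a cover $\widehat{f}\colon S(X, k+1) \to S(Y, k+1)$ of the given $f$. I then set
$$F = \rho^c_{k+1}\circ \widehat{f}\colon S(X, k+1) \to S(Y, k+1) \to S(Y, k).$$
By \corref{cor: factor rho} applied on the codomain, $\rho_{k+1} = \rho_k \circ \rho^c_{k+1}\colon S(Y, k+1) \to Y$, so pasting the commutative square of \thmref{thm: 2-D subdivision map} on top of this factorization exhibits $F$ as a cover of $f$.

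If $k$ is odd (so $k \geq 3$, since $k \geq 2$), then \thmref{thm: 2-D subdivision map} applies at level $k$ itself (taking ``$2k+1$'' to be $k$) and yields a cover $\widehat{f}\colon S(X, k) \to S(Y, k)$. Here I instead set
$$F = \widehat{f}\circ \rho^c_{k+1}\colon S(X, k+1) \to S(X, k) \to S(Y, k),$$
with $\rho^c_{k+1}\colon S(X, k+1) \to S(X, k)$ the partial projection on the domain. Once again, \corref{cor: factor rho} (now applied on $S(X, k+1)$) gives $\rho_{k+1} = \rho_k \circ \rho^c_{k+1}\colon S(X, k+1) \to X$, and the required commutativity follows by pasting the two commutative squares end-to-end.

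Continuity of $F$ in both cases is immediate from the continuity of its two constituent factors, via \propref{prop: rho^c conts} and \thmref{thm: 2-D subdivision map}. I anticipate no real obstacle: the corollary is essentially a diagram chase, with all the substantive work having been done in \thmref{thm: 2-D subdivision map} and in the continuity of the partial projections. The only point requiring any attention is the parity bookkeeping, namely verifying that in each case the odd-subdivision hypothesis of \thmref{thm: 2-D subdivision map} is genuinely available at the level used, and this is immediate from $k \geq 2$.
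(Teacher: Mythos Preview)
Your proposal is correct and follows essentially the same approach as the paper: split on the parity of $k$, apply \thmref{thm: 2-D subdivision map} at the odd level (either $k+1$ or $k$), and compose with the partial projection $\rho^c_{k+1}$ on the codomain or domain respectively, invoking \corref{cor: factor rho} for commutativity. Your additional remarks on continuity and parity bookkeeping are accurate and make explicit what the paper leaves implicit.
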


\begin{proof}
Suppose that $k$ is even.  Pre-compose $\rho_{k}\colon S(Y, k) \to Y$ with the partial projection $\rho^c_{k+1}\colon S(Y, k+1)  \to S(Y, k)$ of \defref{def: rho^c}. Then, as in  \corref{cor: factor rho}, we have $\rho_{k+1} = \rho_{k} \circ \rho^c_{k+1}\colon S(Y, k+1) \to Y$ and \thmref{thm: 2-D subdivision map} provides a filler for the diagram
$$\xymatrix{ S(X, k+1) \ar[d]_{\rho_{k+1}} \ar@{.>}[r]^-{\widehat{f}} & S(Y, k+1) \ar[d]^{\rho_{k} \circ \rho^c_{k+1} }\\
X \ar[r]_-{f} & Y.}$$
But then $F = \rho^c_{k+1}\circ \widehat{f}\colon S(X, 2k+1) \to S(Y, k)$ provides the desired covering of $f$.

Similarly, if  $k$ is odd, then use $F =\widehat{f}\circ  \rho^c_{k+1}\colon S(X, k+1) \to S(Y, k)$.
\end{proof}

\section{Lifting of Homotopies for Paths and Loops}\label{sec: homotopy}

Applications of the results of this paper will appear elsewhere.  But to indicate the way in which these fundamental results play a role in advancing our ``subdivision" agenda of developing homotopy theory in the digital setting, indicated in the Introduction, we include here one result.  We state a consequence of \thmref{thm: 2-D subdivision map rectangle} that, together with \corref{cor: loop subdivision map}, provides results  similar to path lifting and homotopy lifting results that play a prominent role in the development of  the fundamental group in the ordinary topological setting.  And, in fact, we rely on this result in \cite{LOS19c}, where we develop a digital fundamental group.

We use a  ``cylinder object" definition of homotopy, which is the one commonly used in  the digital topology literature.  In \cite{LOS19a} we give a fuller discussion of homotopy, including a ``path object" definition as well.

\begin{definition}\label{def: Homotopy}
Let $f, g\colon X \to Y$ be (continuous) maps of digital images.
We say that $f$ and $g$ are \emph{homotopic}, and write $f \approx g$,  if, for some $N\geq 1$, there is a continuous map
$$H \colon X \times I_N \to Y,$$
with $H(x, 0) = f(x)$ and $H(x, N) = g(x)$.  Then $H$ is a homotopy from $f$ to $g$.
\end{definition}

Suppose we have paths (of the same length)  in $Y$ with the same initial and terminal points.  That is, we have maps $\alpha, \beta \colon I_M \to Y$ with
$\alpha(0) = \beta(0) = y_0$ and $\alpha(M) = \beta(M)= y_M$ for some $y_0, y_M \in Y$.  If  $\alpha \approx \beta$, then the homotopy may be \emph{relative the endpoints}, which is to say that we have $H(0, t) = y_0$ and $H(M, t) = y_M$ for all $t \in I_N$.  If $\alpha$ and $\beta$ are loops in $Y$, so that $y_M = y_0$, and if $\alpha \approx \beta$ via a homotopy relative the endpoints, then we say that \emph{$\alpha$ and $\beta$ are homotopic via a based homotopy of based loops}.  The nomenclature comes from the setting of the fundamental group, as in \cite{LOS19c}, in which  $Y$ is a based digital image, and maps, loops, and homotopies are based.

The construction of $\widehat{H}$ in the proof of \thmref{thm: 2-D subdivision map rectangle} leads to the following ``covering homotopy" property of subdivisions.

\begin{corollary}[To \thmref{thm: 2-D subdivision map rectangle}]\label{cor: path homotopy covering}
Suppose $\alpha, \beta \colon I_M \to Y$ are paths in $Y$, with standard covers $\widehat{\alpha}, \widehat{\beta} \colon S(I_M, 2k+1) = I_{(2k+1)M+2k}\to S(Y, 2k+1)$ as in as in \thmref{thm: path odd subdivision map}.
\begin{itemize}
\item[(A)]   If $\alpha \approx \beta$, then  $\widehat{\alpha} \approx \widehat{\beta} \colon S(I_M, 2k+1) \to S(Y, 2k+1)$.
\item[(B)]   Suppose we have $\alpha(0) = \beta(0) = y_0$ and $\alpha(M) = \beta(M)= y_M$ for some $y_0, y_M \in Y$.  Then $\widehat{\alpha}(0) = \widehat{\beta}(0) = \overline{y_0}$ and $\widehat{\alpha}((2k+1)M+2k) = \widehat{\beta}((2k+1)M+2k) = \overline{y_M}$.
If $\alpha \approx \beta$ relative the endpoints,  then
$\widehat{\alpha} \approx \widehat{\beta}$ relative the endpoints.
\item[(C)]   Suppose we have $\alpha(0) = \alpha(M) = y_0 = \beta(0) = \beta(M)$, so that $\alpha$ and $\beta$ are loops based at some $y_0 \in Y$.
  Then $\widehat{\alpha}$ and $\widehat{\beta}$  are loops in $S(Y, 2k+1)$ (of length $(2k+1)M+2k$) based at $\overline{y_0}$.  If  $\alpha \approx \beta$ via a based homotopy of based loops,  then $\widehat{\alpha} \approx \widehat{\beta}$ via a based homotopy of based loops.
\end{itemize}
\end{corollary}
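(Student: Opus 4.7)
\medskip

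\noindent\emph{Proposal of proof.}  The plan is to apply \thmref{thm: 2-D subdivision map rectangle} directly to a given homotopy $H\colon I_M \times I_N \to Y$ between $\alpha$ and $\beta$, and then read off parts (A)--(C) from the behaviour of the canonical cover $\widehat{H}$ along the four edges of the rectangular domain. All three parts should reduce to an edgewise identification, with no further constructions required beyond the ones already made.

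For part (A), assume $\alpha \approx \beta$ via $H \colon I_M \times I_N \to Y$ with $H(s, 0) = \alpha(s)$ and $H(s, N) = \beta(s)$.  Apply \thmref{thm: 2-D subdivision map rectangle} to obtain the canonical cover
$$\widehat{H}\colon S(I_M, 2k+1) \times S(I_N, 2k+1) \to S(Y, 2k+1),$$
where the second factor is $I_{(2k+1)N + 2k}$.  The last sentence of \thmref{thm: 2-D subdivision map rectangle} identifies the bottom edge $t \mapsto \widehat{H}(t, 0)$ with the standard cover of $s \mapsto H(s, 0) = \alpha(s)$, namely $\widehat{\alpha}$, and likewise identifies the top edge $t \mapsto \widehat{H}(t, (2k+1)N + 2k)$ with $\widehat{\beta}$.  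Thus $\widehat{H}$, viewed as a map on $S(I_M, 2k+1) \times I_{(2k+1)N + 2k}$, is itself a homotopy between $\widehat{\alpha}$ and $\widehat{\beta}$, proving (A).

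For part (B), suppose further that $H$ is a homotopy relative the endpoints, so that $H(0, r) = y_0$ and $H(M, r) = y_M$ for all $r \in I_N$.  Then the left and right edges of the rectangle $I_M \times I_N$ are the constant paths at $y_0$ and $y_M$, respectively.  By the same edge-identification in \thmref{thm: 2-D subdivision map rectangle}, the restrictions $r \mapsto \widehat{H}(0, r)$ and $r \mapsto \widehat{H}((2k+1)M + 2k, r)$ are the standard covers of these constant paths, and by \lemref{lem: technical stuff on covers}(a) each such standard cover is itself constant, at $\overline{y_0}$ and $\overline{y_M}$ respectively.  Evaluating at $r = 0$ and $r = (2k+1)N + 2k$ also gives the stated endpoint values of $\widehat{\alpha}$ and $\widehat{\beta}$.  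Hence $\widehat{H}$ is a homotopy relative the endpoints between $\widehat{\alpha}$ and $\widehat{\beta}$.

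For part (C), if $\alpha$ and $\beta$ are loops based at $y_0$, then the endpoint assertion of (B) applied with $y_M = y_0$ says that $\widehat{\alpha}(0) = \widehat{\alpha}((2k+1)M + 2k) = \overline{y_0}$, and similarly for $\widehat{\beta}$, so both covers are indeed loops based at $\overline{y_0}$.  A based homotopy of based loops is, by definition, a homotopy relative the (common) endpoints, so (C) follows from (B) with $y_M = y_0$.  The only conceptual point to verify is the edge-by-edge identification of the cover of a constant path with a constant cover, which is supplied by \lemref{lem: technical stuff on covers}(a); there is no real obstacle here, since all the work has already been done in constructing $\widehat{H}$ and establishing its compatibility with the edge covers in \thmref{thm: 2-D subdivision map rectangle}.
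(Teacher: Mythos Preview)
Your proposal is correct and follows essentially the same approach as the paper's own proof: apply \thmref{thm: 2-D subdivision map rectangle} to the homotopy $H$, read off (A) from the edge identification of $\widehat{H}$ with the standard covers $\widehat{\alpha}$ and $\widehat{\beta}$, deduce (B) from \lemref{lem: technical stuff on covers}(a) applied to the constant left and right edges, and observe that (C) is the special case $y_M = y_0$ of (B). Your write-up is simply more explicit than the paper's terse version.
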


\begin{proof}
Part (A) is more-or-less a re-statement of the behaviour of $\overline{H}$ around the edges of the rectangle, from \thmref{thm: 2-D subdivision map rectangle}.  It follows from the construction of $\overline{H}$.  Then part (B) follows from the construction of $\overline{H}$ in \thmref{thm: 2-D subdivision map rectangle}, together with the fact that the standard cover of a constant path is a constant path (part (a) of \lemref{lem: technical stuff on covers}).  Part (C) is a special case of part (B).
\end{proof}

The ability to cover based homotopies of based loops in this way leads in \cite{LOS19c} to the result that our fundamental group constructed there is preserved by subdivision. That result is one of the major advances of \cite{LOS19c} over existing treatments of the fundamental group in the digital topology literature.  Other applications of the results of this paper appear in \cite{LOS19a}.

We believe that the results here for 1D and 2D domains may be extended for domains of any dimension.  However, in doing so there are many technical details to be resolved, as well as expositional challenges.   If it is possible to establish covering maps exist generally, for any dimension of domain, then it should be possible, for example, to develop higher homotopy groups in a way that incorporates subdivision similarly to the way in which  \cite{LOS19a} develops the fundamental group in a way that incorporates subdivision.

%\bibliographystyle{amsplain}
%\bibliography{Digital}

\begin{thebibliography}{1}

\bibitem{Bo-Ve18}
A.~Borat and T.~Vergili, \emph{Digital {L}usternik-{S}chnirelmann category},
  Turkish J. Math. \textbf{42} (2018), no.~4, 1845--1852. \MR{3843949}

\bibitem{Bo99}
L.~Boxer, \emph{A classical construction for the digital fundamental group}, J.
  Math. Imaging Vision \textbf{10} (1999), no.~1, 51--62. \MR{1692842}

\bibitem{B-S16}
L.~Boxer and P.~C. Staecker, \emph{Connectivity preserving multivalued
  functions in digital topology}, J. Math. Imaging Vision \textbf{55} (2016),
  no.~3, 370--377. \MR{3489789}

\bibitem{DE-OR07}
Erik~D. Demaine and Joseph O'Rourke, \emph{Geometric folding algorithms},
  Cambridge University Press, Cambridge, 2007, Linkages, origami, polyhedra.
  \MR{2354878}

\bibitem{Evako2006}
A.~V. Evako, \emph{Topological properties of closed digital spaces: One method
  of constructing digital models of closed continuous surfaces by using
  covers}, Computer Vision and Image Understanding \textbf{102} (2006),
  134--144.

\bibitem{LOS19c}
G.~Lupton, J.~Oprea, and N.~A. Scoville, \emph{A fundamental group for digital
  images}, Preprint, 2019.

\bibitem{LOS19a}
\bysame, \emph{Homotopy theory in digital topology}, arXiv:1905.07783
  [math.AT], 2019.

\bibitem{Ro86}
A.~Rosenfeld, \emph{`{C}ontinuous' functions on digital pictures}, Pattern
  Recognition Letters \textbf{4} (1986), 177--184.

\end{thebibliography}

\providecommand{\bysame}{\leavevmode\hbox to3em{\hrulefill}\thinspace}
\providecommand{\MR}{\relax\ifhmode\unskip\space\fi MR }
% \MRhref is called by the amsart/book/proc definition of \MR.
\providecommand{\MRhref}[2]{%
  \href{http://www.ams.org/mathscinet-getitem?mr=#1}{#2}
}
\providecommand{\href}[2]{#2}

\end{document}